\def\R{{\mathbb R}}
\def\N{{\mathbb N}}
\def\Z{{\mathbb Z}}
\def\1{{1\!\!1}}
\def\E{{\mathbb E}}
\def\P{{\mathbb P}}
\def\cal{\mathcal}
\def\eps{\varepsilon}
\title[Explicit Lyapunov functions  of Jackson networks]{Explicit Lyapunov functions and
  estimates of the essential spectral radius for Jackson networks}
\author[I. Ignatiouk-Robert]{Irina Ignatiouk-Robert}
\address[I. Ignatiouk-Robert]{
{Universit\'e de Cergy-Pontoise,}
{AGM, D\'epartement de math\'ematiques,}
{UMR 8088,} 
{2, Avenue Adolphe Chauvin,}
{95302 Cergy-Pontoise Cedex,}
{France}}
\email[I. Ignatiouk]{Irina.Ignatiouk@u-cergy.fr}
\author[D. Tibi]{Danielle Tibi}
\address[D. Tibi]{
{Universit\'e Paris-7,}
{LPMA, UMR 7599}
{Universit\'e Paris-Didrot,}
{Site Chevaleret, Case 7012,}
{75205, Paris, Cedex 13}
{France}}
\email[D.Tibi]{tibi@math.univ-paris-diderot.fr}
\date{\today}
\keywords{Jackson networks. Twisted processes. Lyapunov function. Stationary
  distribution. Essential spectral radius}
\subjclass{60J27, 37B25, 60K25, 90B15}
\newtheorem{theorem}{Theorem}
\newtheorem{prop}{Proposition}[section]
\newtheorem{cor}{Corollary}[section]
\newtheorem{defi}{Definition}
\newtheorem{lemma}{Lemma}[section]
\newcommand{\be}{\begin{equation}}
\newcommand{\ee}{\end{equation}}
\begin{document}
\begin{abstract}
A family of explicit Lyapunov functions for positive recurrent Markovian Jackson networks is constructed. With this result we obtain explicit estimates of the tail distribution of the first time when the process returns to  large compact sets and some explicit estimates of the essential radius of the process. The  essential spectral radius  of the process provides the best  geometric convergence rate to  equilibrium that one can get by changing the transitions of the process in a finite set. 
\end{abstract}
\maketitle 

\section{Introduction}\label{sec1}

Before formulating our results we recall the definition and some well known results
concerning classical Jackson networks, see~\cite{Kelly} for example. For a Jackson network with $d$
queues,  the arrivals at the $i$-th queue are Poisson with parameter $\lambda_i$ and the
services delivered by the server are exponentially distributed with parameters
$\mu_i$. All the Poisson processes and the services are assumed to be independent. The routing matrix is
denoted $P=(p_{ij};\,i,j=1,\ldots,d)$, $p_{ij}$ is the probability that a
customer goes to
the $j$-th queue when he has finished his service at queue $i$. The
residual quantity \[
p_{i0} = 1 - \sum_{j=1}^d p_{ij}
\]
is the probability that this customer leaves definitively the network. Without any
restriction of generality we can assume that $p_{ii}=0$ for all $i\in\{1,\ldots,d\}$. 

Denote by $Z_i(t)$ the length of the queue $i$ at time $t$. Then the process $Z(t)=(Z_1,(t),\ldots,Z_d(t))$ is
a continuous time Markov process on $\Z^d_+$ generated by 
\[ {\cal L}f
(y) = \sum_{z\in \Z^d_+} q(y,z) (f(z) - f(y)), \quad y\in \Z^d_+, 
\]
with $q(y,z)=q(z-y)$ such that 
\begin{equation}\label{eq1-1} q(y) = \left\{
\begin{array}{ll}
\lambda_i, \ &\mbox{ if } y = \epsilon^i, \; i\in \{1,\ldots ,d\}, \\
\mu_ip_{i0}, &\mbox{
if } y = - \epsilon^i, \; i\in \{1,\ldots ,d\}, \\ \mu_ip_{ij}, &\mbox{ if
} y = \epsilon^j
- \epsilon^i,
\; i,j\in \{1,\ldots ,d\},\\ 0, &\mbox{ otherwise,} \end{array} \right.
\end{equation}
where $\epsilon^i$ denotes the $i$th
unit vector, $\epsilon^i_j=0$ if $j\not=i$ and $\epsilon^i_i=1$. It is
convenient to put  $p_{00}
= 1$ and $p_{0i} = 0$ for $i \not= 0$, the matrix
$(p_{ij};\,i,j=0,\ldots,d)$ is then
stochastic. We denote by $p^{(n)}_{ij}$  the
$n$-time transition probabilities of a Markov chain with $d+1$ states
associated to the
stochastic matrix $(p_{ij};\,i,j=0,\ldots,d)$. 
\medskip

{\bf Assumption~(A).} We suppose that the matrix $(q(x-y);\,x,y\in \Z^d)$ is
irreducible.
\medskip

\noindent
This assumption is equivalent to the following conditions
\begin{itemize} \item[$(A_1)$] Every customer
leaves the network with probability $1$, i.e. for any $i\in \{ 1,\ldots
,d\}$ there exists
$n\in \N,$ such that $p^{(n)}_{i0} > 0.$ This condition is satisfied if and only if the
spectral radius of the matrix $(p_{ij};\,i,j=1,\ldots,d)$ is strictly less
than unity.
\item[$(A_2)$] for any $ i= 1,\ldots,d$, there exist $ n\in \N $ and $ j\in \{
1,\ldots,d\} $
such that $\lambda_jp_{ji}^{(n)}>0.$  \end{itemize}

Under the assumption~$(A_1)$, the system of traffic equations  
\begin{equation}\label{eq1-2} \nu_j = \lambda_j + \sum_{i=1}^d
\nu_i p_{ij}, \; \; j=1,\ldots ,d. \end{equation}
has a unique solution $(\nu_i)$, and this solution satisfies $\nu_i >0$ for all  $i \in \{1,\ldots,d \}$. The Markov
process
$(Z(t))$ is ergodic (positive recurrent) if and only if
\begin{equation}\label{eq1-4}
\nu_i < \mu_i \; \; \mbox{ for all } \; i=1, \ldots ,d, \end{equation}
and the stationary probabilities $(\pi(x);\,x\in
\Z^d_+)$ are given by the product
formulae
\begin{equation}\label{eq1-3}
\pi (x) = \prod_{i=1}^d {({\nu_i}/{\mu_i})^{x_i}}{(1 - {\nu_i}/{\mu_i})}, \; \; x\in \Z^d_+.
\end{equation}  

\medskip

{\bf Assumption~(B).} We assume that the inequalities \eqref{eq1-4} hold. 

\medskip

Fayolle, Malyshev, Men'shikov and
Sidorenko~\cite{F-M-M-S} proved that the rate of convergence to stationary distribution
for ergodic Jackson networks is exponential. The proof of this result relies on the
construction of a positive Lipschitz continuous function $f :\R^d_+\to [0,+\infty[$ satisfying the inequality 
\be\label{eq1-5}
{\cal L}f (x) ~\leq~ - \varepsilon, \quad \forall x\in\Z_+^d\setminus E
\ee
for some $\varepsilon > 0$ and some finite subset $E\subset\Z_+^d$. 
Such a function $f$ is often called a Lyapunov function for the Markov process $(Z(t))$. Using \eqref{eq1-5}
   one can easily show that for $\sigma > 0$ small enough, the function $h(x)
    = \exp(\sigma f(x))$ satisfies the inequality 
\be\label{eq1-6}
{\cal L}h (x) ~\leq~ - \theta h(x), \quad \forall x\in\Z_+^d\setminus E 
\ee
for some $\theta = \theta(\varepsilon) > 0$. Usually, a function $h:\Z_+^d\to\R_+$
    satisfying the inequality \eqref{eq1-6} and such that 
    \be\label{eq1-6p} 
    c_E(f) ~\stackrel{def}{=}~  \ \inf_{x\in \Z_+^d\setminus E}  f(x)  >  0  
    \ee
    is also called a Lyapunov function for $(Z(t))$. To make a difference with a Lyapunov function
    satisfying the inequality \eqref{eq1-5}, we call such a function $h$  a {\em multiplicative
    Lyapunov function}. For the hitting time $\tau_{E} = \inf\{ t > 0 : Z(t)\in E\}$,
    the inequalities \eqref{eq1-6} and \eqref{eq1-6p}  imply that 
\be\label{eq1-7}
\P_x(\tau_{E} > t) \leq~  \frac{1}{c_E(f)} \E_x\Bigl( h(Z(t)), \;  \tau_{E} > t\Bigr) \leq~  \frac{1}{c_E(f)}\exp( - \theta t ) h(x), 
\ee
for all $x\in\Z_+^d\setminus E$. An explicit form for  the multiplicative Lyapunov function $h$ and the quantity $\theta$
whould therefore imply explicit estimates for the tail distribution of the hitting time
$\tau_{E}$. Unfortunately,  construction of an
explicit multiplicative Lyapunov function satisfying \eqref{eq1-6} for a given finite set
$E\subset\Z_+^d$ with the best possible
$\theta$ is usually a very difficult problem. In
\cite{F-M-M-S},  the Lyapunov function $f$ itself and the corresponding set $E$ are both 
rather implicit.

In the present paper we construct a class of explicit multiplicative Lyapunov functions  $h:\Z_+^d\to
[1, +\infty[$ with an explicit  
\[
\theta_h  ~\stackrel{def}{=}~ - \limsup_{|x|\to\infty} {\cal L}h (x)/h(x) ~>~ 0. 
\]
For any such a function $h$ and any $0 < \theta < \theta_h$, one could  therefore identify the set $E$ where \eqref{eq1-6} holds
and get an explicit estimate for the tail distribution of the
hitting time $\tau_E$. 

Using the explicit form of the Lyapunov functions  we obtain an explicit estimate for the {\em essential spectral radius} of the process
$(Z(t))$. Recall that the {\em spectral radius} $r^*$ of the process
 $(Z(t))$ is defined as the infimum of all those $r>0$ for which 
\[
\int_0^\infty r^{-t} \P_x(Z(t)=y)  \, dt < +\infty, \;
\forall x,y\in\Z^d_+. 
\]
When the process $(Z(t))$ is recurrent we obviously have $r^*=1$. The {\it essential spectral
  radius} $r^*_e$ of $(Z(t))$ is the infimum of all those $r>0$ for which there 
is a finite set $E\subset\Z^d_+$ such that 
\[
\int_0^\infty r^{-t} \P_x\bigl( Z(t)=y, \;\tau_E > t \bigr)  \, dt < +\infty \quad 
\text{ for all } \quad  x,y\in\Z^d_+ \setminus E. 
\]
For  the recurrent Markov process  $(Z(t))$,  the quantity $r^*_e$ is equal to the infimum
of all those $r>0$ for which there is a finite set $E\subset\Z^d_+$ such that 
\be\label{eq1-8}
 \int_0^\infty r^{-t} \P_x\bigl(\tau_E > t \bigr)  \, dt < +\infty \quad 
\text{ for all } \quad  x\in\Z^d_+ \setminus E
\ee
(see for instance
Proposition~3.6  of ~\cite{Ignatiouk:05}).  Remark that for those $r>0$ for which \eqref{eq1-8} holds, the function 
\[
h_{r,E}(x) ~\stackrel{def}{=}~  \begin{cases} \int_0^\infty r^{-t} \P_x\bigl(\tau_E > t \bigr)  \, dt , &
\text{ for } \quad  x\in\Z^d_+ \setminus E, \\ 
0 & \text{ for } \quad  x\in E, 
\end{cases}
\]
satisfies the inequalities \eqref{eq1-6} and \eqref{eq1-6p} with a given $E$, $\theta = - \log r$ and 
\[
c_E(f) \geq \int_0^\infty r^{-t} e^{- \sum_i (\lambda_i  + \mu_i) t} \; dt ~\geq~ \left(\ln r + \sum_{i=1}^d(\lambda_i + \mu_i)\right)^{-1} 
\]
The last property of the essential spectral radius $r^*_e$ combined with the estimates
\eqref{eq1-7} shows therefore that the quantity $\theta^*_e = - \log r^*_e$ is equal to the supremum of
all $\theta > 0$ for which there exists a multiplicative Lyapunov function $h:\Z_+^d\to \R_+$ satisfying the inequalities
    \eqref{eq1-6} and \eqref{eq1-6p} for some finite subset $E\subset\Z_+^d$. This is also the best $\theta >
    0$ one could expect to have in \eqref{eq1-7}.

 The essential spectral radius is moreover related to the rate of convergence to  equilibrium. To calculate the rate of convergence to  equilibrium, one should  identify the spectral gap of the transition operator, and except for some very particular processes, this is an extremely difficult problem. Explicit estimates of the rate of convergence are therefore of interest. Malyshev  and
Spieksma~\cite{Malyshev-Spieksma}  proved that for some general class of Markov chains,  the quantity $r^*_e$ gives an
accurate bound for that : this is the  best geometric
convergence rate  one can get by changing the transitions  of  the process on finite
subsets of states.   By Perssons principle (see Liming
Wu~\cite{LimingWu}),  for symmetric Markov chains the quantity $r^*_e$ is related to
the $L^2$-essential spectral radius of the corresponding Markov semi-group.  
For more details concerning the relationship between the quantity 
$r^*_e$ and the rate of convergence to equilibrium see 
Liming Wu~\cite{LimingWu}.

In \cite{Ignatiouk:05}, the quantity $r_e^*$ was represented in terms of the sample path large
deviation rate function $I_{[0,T]}(\cdot)$ of the scaled processes $Z_\eps(t) ~=~ \eps Z(t/\eps)$, $t\in[0,T]$. 
Recall that the family of scaled Markov processes $(Z_\eps(t), \; t\in[0,T])$ satisfies the sample path large deviation  
principle (see~\cite{A-D,D-E, Ignatiouk:04, Ignatiouk:01}) with a good rate function  $I_{[0,T]}(\cdot)$.
Corollary~7.1 of  the paper ~\cite{Ignatiouk:05} proves that 
\be\label{eq1-9}
\log r^*_e ~=~ - ~\inf_{\phi ~: ~\phi(0)=\phi(1), \; \phi(t)\not= 0, \, \forall 0 < t < 1}
~I_{[0,1]}(\phi) 
\ee
where the infimum is taken over all absolutely continuous functions $\phi :[0,1]\to\R^d_+$
with $\phi(0)=\phi(1)$ and such that $\phi(t)\not=0$ for all $0< t < 1$. 
For $d \leq 2$, the quantity $r_e^*$ was calculated explicitly : in this case, the infimum
at the right hand side of \eqref{eq1-9} is
achieved at some constant function $\phi(t) \equiv x=(x^1,\ldots,x^d)\in\R^d_+$ with $x^i >
0$ for some $1\leq i\leq d$ and $x^j=0$ for $j\not=i$. For $d=1$, Proposition~7.1 of
\cite{Ignatiouk:05} shows that 
\[
\log r_e^* ~=~ - (\sqrt{\mu_1} - \sqrt{\lambda_1})^2
\]
and  by Proposition~7.2 of \cite{Ignatiouk:05}, for $d=2$, 
\be\label{eq1-10}
\log r_e^* ~=~ - (1- p_{12}p_{21}) \min\{(\sqrt{\mu_1} - \sqrt{\nu_1})^2, (\sqrt{\mu_2} - \sqrt{\nu_2})^2\}.
\ee
Unfortunately, for higher dimensions $d\geq 3$, the variational problem  \eqref{eq1-9}
seems very difficult to resolve. In the present paper, using the explicit Lyapunov
functions,  we obtain explicit estimates for the essential spectral radius 
$r_e^*$ for an arbitrary dimension $d$. The quantity $r_e^*$  is calculated explicitly for
several examples of Jackson networks.

\section{General  results}\label{sec2} 
To formulate our results, we need to introduce some additional notation : 
\[
G ~\stackrel{def}{=}~ (Id - P)^{-1} ~=~ \sum_{n=0}^\infty P^n,
\]
where $P^n$ denotes the $n$-th iterate of the routing matrix $P=(p_{ij}, \;
i,j\in\{1,\ldots,d\})$, and the series converges because, under our assumptions, the spectral radius of the routing matrix
$P$ is strictly less than unity. We  moreover introduce an auxiliary Markov chain $(\xi_n)$
on $\{0,\ldots,d\}$, with an absorbing state $0$ and  
transition probabilities $p_{ij}$ for $i\in\{1,\ldots,d\}$ and $j\in\{0,\ldots,d\}$. For
$j\in\{1,\ldots,d\}$, we consider $\tau_j=\inf\{n\geq 0 : \xi_n=j\}$ with the convention that
$\inf\emptyset = +\infty$, and  we denote 
\[
Q_{ij} ~=~ \P_i(\tau_j < +\infty) \quad \text{ for } i,j\in\{1,\ldots,d\}. 
\]
\subsection{Explicit Lyapunov functions}
For ${\bf \gamma}=(\gamma_1,\ldots,\gamma_d)\in\R_+^d$, we introduce $d$  vectors $\overrightarrow{\gamma_i} =
(\gamma_i^1,\ldots,\gamma_i^d)$,  $i=1, \ldots , d$, with 
\be\label{2-1}
\gamma_i^j~=~ \log\Bigl( 1  + Q_{ji}\gamma_i\Bigr) \text{ for }
   i,j\in\{1,\ldots,d\}.
\ee
$\Gamma$ denotes the set of all vectors 
${\bf \gamma}\in\R_+^d$ for which the following condition is
satisfied:

\begin{defi} $\gamma\in\Gamma$ if and only if  for any $i=1,\ldots,d$ 
and for any non-zero vector $v=(v^1,\ldots,v^d)\in\R_+^d$ with  $v^i = 0$, 
\be\label{2-2}
\overrightarrow{\gamma_i}\cdot v  ~<~  \sup_{1\leq j\leq d} ~\overrightarrow{\gamma_j}\cdot v
\ee
\end{defi}
Here and throughout, $u \cdot v$ denotes  for $u, v\in\R^d$ the usual scalar product in
$\R^d$.  

\noindent
Our first general preliminary result is the following statement. 

\begin{theorem}\label{pr1} Under the hypothesis (A), for any ${\bf \gamma}\in\Gamma$, the function $h_\gamma : \Z_+^d \to\R_+$, defined by 
\be\label{2-3}
h_\gamma(x) ~=~ \sum_{i=1}^d \exp(\overrightarrow{\gamma_i}\cdot x), \quad x\in\Z_+^d, 
\ee
satisfies the equality
\be\label{2-4}
\limsup_{|x|\to\infty} {\cal L}h_\gamma (x)/h_\gamma(x) ~=~ -~ \min_{1\leq i \leq d}
~ \frac{\gamma _i}{G_{ii}} ~\Bigl( \frac{\mu_i}{1 +\gamma_i} -\nu_i \Bigr). 
\ee
\end{theorem}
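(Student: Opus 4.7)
The plan is to evaluate ${\cal L}h_\gamma(x)$ summand by summand: writing $f_i(y):=e^{\overrightarrow{\gamma_i}\cdot y}$, one has $h_\gamma=\sum_i f_i$, so the analysis reduces to computing ${\cal L}f_i/f_i$. Setting $\alpha_i^j:=e^{\gamma_i^j}=1+Q_{ji}\gamma_i$, a direct expansion at an interior point ($x_j\geq 1$ for all $j$) yields
\[
\frac{{\cal L}f_i(x)}{f_i(x)} = \sum_j \lambda_j(\alpha_i^j-1) + \sum_j \frac{\mu_j}{\alpha_i^j}\Bigl[p_{j0} + \sum_k p_{jk}\alpha_i^k - \alpha_i^j\Bigr].
\]
The key observation is that first-step analysis of the absorbed chain gives $Q_{ji}=\sum_k p_{jk}Q_{ki}$ for $j\neq i$, which translates to $p_{j0}+\sum_k p_{jk}\alpha_i^k=\alpha_i^j$, making the bracket vanish for every $j\neq i$. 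For $j=i$ one uses the return-probability identity $\sum_k p_{ik}Q_{ki}=1-G_{ii}^{-1}$. Combining with $\sum_j \lambda_j Q_{ji}=\nu_i/G_{ii}$ (rewritten from $\nu=\lambda G$ via $G_{ji}=Q_{ji}G_{ii}$), one obtains the constant
\[
A_i := -\frac{\gamma_i}{G_{ii}}\Bigl(\frac{\mu_i}{1+\gamma_i}-\nu_i\Bigr).
\]

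At a boundary point with $x_j=0$ for some $j$, the missing service jumps from queue $j$ contribute an additive correction to ${\cal L}f_i(x)/f_i(x)$; by the same collapse this correction vanishes whenever $j\neq i$, and for $j=i$ it equals the positive constant $B_i:=\mu_i\gamma_i/[G_{ii}(1+\gamma_i)]$. Summing over $i$, the master identity becomes
\[
\frac{{\cal L}h_\gamma(x)}{h_\gamma(x)} = \sum_{i=1}^d \bigl(A_i + B_i\,\mathbf{1}_{\{x_i=0\}}\bigr)\,\omega_i(x), \quad \omega_i(x):=f_i(x)/h_\gamma(x).
\]
For the upper bound $\limsup\leq\max_i A_i$, take any sequence $x_n\to\infty$ and extract a subsequence with $x_n/|x_n|\to v\in\R_+^d$, $|v|=1$. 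The positive surcharges $B_i\mathbf{1}_{\{x_n^i=0\}}$ only enter when $v^i=0$, and \eqref{2-2} then gives $\overrightarrow{\gamma_i}\cdot v<\max_k\overrightarrow{\gamma_k}\cdot v$, so $\omega_i(x_n)$ decays exponentially in $|x_n|$. The correction terms vanish in the limit and what remains is $\leq \max_i A_i$.

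For the matching lower bound, pick $i_0$ maximizing $A_i$ and apply \eqref{2-2} at $v=\epsilon^{i_0}$: for every $i\neq i_0$ we have $v^i=0$, so $\overrightarrow{\gamma_i}\cdot v=\gamma_i^{i_0}<\max_k\gamma_k^{i_0}$, which excludes all $i\neq i_0$ as maximizers. The maximum is therefore attained at $k=i_0$, and by continuity $\overrightarrow{\gamma_{i_0}}$ strictly dominates on a whole neighborhood of $\epsilon^{i_0}$ that meets the open positive cone. Sending $x_n\to\infty$ along an interior ray in that neighborhood keeps all $B_i$-corrections trivially zero and drives $\omega_{i_0}(x_n)\to 1$, so the ratio converges to $A_{i_0}=\max_i A_i$, matching the upper bound. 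The main technical subtlety is the collapse identity $p_{j0}+\sum_k p_{jk}\alpha_i^k=\alpha_i^j$ for $j\neq i$ -- recognizing it as the first-step equation for the absorbed chain -- after which \eqref{2-2} appears visibly as the minimal geometric hypothesis ensuring that the positive boundary surcharges $B_i$ are exponentially suppressed by the weights $\omega_i$ in every limiting direction.
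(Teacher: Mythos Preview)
Your proof is correct and follows essentially the same line as the paper's: the identity ${\cal L}f_i(x)/f_i(x)=A_i+B_i\mathbf{1}_{\{x_i=0\}}$ is exactly the content of the paper's Lemma~\ref{lem4-1} (derived there via Lemma~\ref{lemma_background}, which packages the same first-step identities $Q_{ji}=\sum_k p_{jk}Q_{ki}$ for $j\neq i$ and $\sum_k p_{ik}Q_{ki}=1-G_{ii}^{-1}$), and your use of the $\Gamma$ condition to suppress the boundary surcharges is the same mechanism as the paper's relations \eqref{4-4}--\eqref{4-5}. The only cosmetic differences are that the paper phrases the upper bound via a uniform estimate obtained by upper semicontinuity on the unit sphere rather than by subsequence extraction, and for the lower bound the paper sends $x$ to infinity along the $i_0$-th \emph{axis} (so $\Lambda(x)=\{i_0\}$ and the surcharges $B_j$ for $j\neq i_0$ are present but killed by $\omega_j\to 0$), whereas you perturb to an interior ray so that no surcharge appears at all; both routes yield the same limit $A_{i_0}$.
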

The proof of this result is given in Section~\ref{sec3}.

\medskip 

Remark that 
\[\gamma _i\Bigl( \frac{\mu_i}{1 +\gamma_i} -\nu_i \Bigr)
 > 0
\]
if and only if 
\[
0 < \gamma_i < \frac{\mu_i}{\nu_i} -1.
\]
If $\gamma \in \Gamma$ and the last inequalities are satisfied for all $i=1,\ldots,d$, then the right hand side of
\eqref{2-4} is negative and consequently, $h_\gamma$ is a multiplicative Lyapunov
function for $(Z(t))$.

In Section~\ref{examples}, we provide an example of a Jackson network with a completely
symmetrical routing matrix, where the set $\Gamma$ has a simple
explicit representation.  Unfortunately, in general, the explicit description of
the set $\Gamma$ is  a difficult  problem and  it is of interest to give another
equivalent representation  of $\Gamma$. This is a subject of our next result. Here and
throughout, ${\cal M}_1$  denotes the set of probability measures on 
    $\{1,\ldots,d\}$ : 
\[
{\cal M}_1 ~=~ \{ \theta  = (\theta^1, \cdots , \theta^d) \in\R_+^d ~:~ \|\theta\|_1 = 1\}
\]
where $\|\theta\|_1 = |\theta ^1|+\cdots +|\theta ^d|$ is the usual $L^1$ norm in $\R^d$.
For two vertors $a=(a^1,\ldots,a^d)$ and $b=(b^1,\ldots,b^d)$ in $\R^d$ we write $a < b$
if $a^k < b^k$ for all $k=1,\ldots,d$.  

\begin{prop}\label{Gamma}  
1) A vector $\gamma=(\gamma_1,\ldots,\gamma_d)\in \R_+^d$ belongs to the set  $
    \Gamma$  if and only if for any  $ i=1, \ldots , d$,   there exists  $\theta _i=
    (\theta _i^1,\ldots,\theta_i^d) \in {\cal M}_1$ satisfying 
\be\label{2-7p} 
\overrightarrow{\gamma _i} ~<~ \sum_{j=1}^d \theta_i^j \overrightarrow{\gamma _j}, 
\ee
where the vectors $\overrightarrow{\gamma_i} = (\gamma_i^k,1 \le k \le d)$  for $i=1,
\ldots ,d$, are defined by \eqref{2-1}. 

\medskip
2) Moreover, if $\gamma_i > 0$ for all
$i=1,\ldots,d$, then $\gamma=(\gamma_1,\ldots,\gamma_d)\in\Gamma$ if for any  $ i=1, \ldots , d$,   there
    exists  $\theta _i= (\theta _i^1,\ldots,\theta_i^d) \in {\cal M}_1$ satisfying the
    following condition
\be\label{2-7} 
\gamma _i^k ~<~ \sum_{j=1}^d \theta_i^j \gamma _j^k,  \quad \text{ whenever } \;  k \in \{1, \cdots ,d\} \setminus \{i\}  \text{ and } \; Q_{ki}>0
\ee
\end{prop}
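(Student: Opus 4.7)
The plan is to prove Part~1 by a separation/minimax argument and Part~2 by a small perturbation of the measure furnished by the hypothesis. A preliminary point is that the strict inequality in \eqref{2-7p} is naturally read componentwise on the coordinates $k\in\{1,\ldots,d\}\setminus\{i\}$ only, matching the defining property of $\Gamma$ (where only vectors $v$ with $v^i=0$ are tested); with that reading the two directions will be compatible.

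For the easy direction $(\Leftarrow)$ of Part~1, given $\theta_i$ with $\gamma_i^k<\sum_j\theta_i^j\gamma_j^k$ for every $k\neq i$ and any non-zero $v\in\R_+^d$ with $v^i=0$, the strictness at some $k$ with $v^k>0$ transfers to the scalar product and gives $\overrightarrow{\gamma_i}\cdot v<\sum_j\theta_i^j\,\overrightarrow{\gamma_j}\cdot v\leq\max_j\overrightarrow{\gamma_j}\cdot v$. For the converse I would fix $i$, restrict attention to the compact simplex $K_i=\{v\in\R_+^d:v^i=0,\ \|v\|_1=1\}$, and observe that $\gamma\in\Gamma$ means the continuous function $v\mapsto\max_j(\overrightarrow{\gamma_j}-\overrightarrow{\gamma_i})\cdot v$ is strictly positive on $K_i$, hence bounded below by some $\alpha>0$. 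Rewriting $\max_j$ as $\max_{\theta\in\mathcal{M}_1}\sum_j\theta^j$ and applying von Neumann's minimax theorem to the resulting bilinear form on $\mathcal{M}_1\times K_i$ yields
\[
\alpha\;=\;\max_{\theta\in\mathcal{M}_1}\,\min_{k\neq i}\Bigl(\sum_j\theta^j\gamma_j^k-\gamma_i^k\Bigr),
\]
so a maximiser $\theta_i\in\mathcal{M}_1$ achieves $\gamma_i^k<\sum_j\theta_i^j\gamma_j^k$ for every $k\neq i$, which is exactly \eqref{2-7p}. This minimax step is the only non-trivial ingredient of Part~1 and the main obstacle of the whole statement.

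For Part~2, the hypothesis \eqref{2-7} differs from the condition of Part~1 only at indices $k\neq i$ with $Q_{ki}=0$. At such $k$ one has $\gamma_i^k=\log(1+Q_{ki}\gamma_i)=0$, while $\gamma_k^k=\log(1+\gamma_k)>0$ since $Q_{kk}=1$ and $\gamma_k>0$. I would therefore replace $\theta_i$ by the mixture $\theta_i^\eps=(1-\eps)\theta_i+(\eps/d)(1,\ldots,1)$. For each $k\neq i$ with $Q_{ki}>0$ the strict inequality at $\theta_i$ passes to $\theta_i^\eps$ by continuity for all sufficiently small $\eps$, while for each $k\neq i$ with $Q_{ki}=0$ the uniform admixture forces $\sum_j\theta_i^{\eps,j}\gamma_j^k\geq(\eps/d)\gamma_k^k>0=\gamma_i^k$ for any $\eps>0$. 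Since the number of pairs $(i,k)$ is finite, a single $\eps>0$ simultaneously satisfies all these strict inequalities, and Part~1 applied to $\theta_i^\eps$ yields $\gamma\in\Gamma$. The only subtlety here is that the hypothesis $\gamma_i>0$ is genuinely used to make the degenerate coordinates $k$ with $Q_{ki}=0$ harmless; everything else is routine bookkeeping.
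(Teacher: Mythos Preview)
Your proof is correct. The preliminary remark about reading \eqref{2-7p} only on the coordinates $k\neq i$ is exactly what the paper's own argument establishes (it obtains the strict inequality \eqref{eq5-1} for $k\neq i$ only and then drops the auxiliary constraint $\theta_i^i=0$), so your reading is the intended one.

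For Part~1 the paper proceeds differently: it isolates a separation lemma (for vectors $u_1,\ldots,u_d$, the condition ``$\max_j u_j\cdot v>0$ for every nonzero $v\in\R_+^d$'' is equivalent to ``$\sum_j\theta^j u_j$ lies in the open positive orthant for some $\theta\in\mathcal M_1$''), proved by Hahn--Banach separation of the cone generated by the $u_j$ from $]0,\infty[^d$. You instead invoke von~Neumann's minimax theorem on the bilinear payoff $(\theta,v)\mapsto\sum_j\theta^j(\overrightarrow{\gamma_j}-\overrightarrow{\gamma_i})\cdot v$ over $\mathcal M_1\times K_i$. The two arguments are dual incarnations of the same convex-duality fact; yours is slightly more compact because it avoids setting up the separating-hyperplane geometry and directly produces the optimal $\theta_i$ with a quantitative lower bound $\alpha>0$, while the paper's route has the advantage of packaging the key step as a reusable lemma independent of the specific $\overrightarrow{\gamma_j}$'s.

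For Part~2 your argument coincides with the paper's: both note that the strictness of \eqref{2-7} allows one to perturb $\theta_i$ so that every component is positive (the paper says this ``without any restriction of generality'', you write down the explicit mixture $(1-\eps)\theta_i+(\eps/d)\mathbf 1$), and then for $k\neq i$ with $Q_{ki}=0$ use $\gamma_i^k=0<\theta_i^k\gamma_k^k\le\sum_j\theta_i^j\gamma_j^k$, which requires $\gamma_k>0$.
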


From the above proposition it follows that the set $\Gamma$ is
open in $\R_+^d$. 

The proof of this proposition is given in Section~\ref{h-banach}.

\bigskip

Our following result proves that the set $\Gamma$ is nonempty and provides an explicit form
for some of the vectors ${\bf \gamma}\in\Gamma$.  Recall that the spectral radius ${\cal
  R}$ of the routing matrix $P$ is  defined  by~:  
\[
{\cal R} ~\stackrel{def}{=}~ \inf_{\rho > 0} ~\max_{i=1,\ldots,d} ~{(\rho P)_i}/{\rho_i}
\]
where the infimum is taken over all $\rho = (\rho_1,\ldots,\rho_d)$ with positive
components $\rho_1 >0, \ldots,\rho_d >0$. If the matrix $P$ is irreducible, ${\cal R}$  is
the Perron-Frobenius 
eigenvalue and the last infimum is achieved for the left hand side Perron-Frobenius eigenvector
$\rho$ of $P$ (see Seneta~\cite{Seneta}). Under the hypothesis (A), the spectral
radius ${\cal R}$ is strictly less than unity and consequently, the set of vectors $\rho = (\rho_1,\ldots,\rho_d)$ satisfying the
inequalities 
\[ 0 ~\leq~ (\rho P)_i ~<~ \rho_i \quad \forall i\in\{1,\ldots,d\} \] 
is nonempty. Remark  that these inequalities are equivalent to
\be\label{2-5}
 (\rho P)_i ~<~ \rho_i \quad \forall i\in\{1,\ldots,d\} 
\ee
Indeed, a vector $\rho=(\rho_1,\ldots,\rho_d)$ satisfies the
inequalities \eqref{2-5}  if and only if the vector $\beta
= (\beta_1,\ldots,\beta_d) = \rho - \rho P$ has  positive components $\beta_i > 0$
for all $i=1,\ldots,d$. Now since the equality $\beta = \rho - \rho P$ is equivalent to  $\rho
~=~ \beta G$,  then \eqref{2-5} implies that  $ 0 < \rho _i$ and  $0 ~\leq~ (\rho P)_i$ for all $i=1,\ldots,d$. 

For a vector $\rho = (\rho_1,\ldots,\rho_d)$ satisfying the inequalities
\eqref{2-5},  we define 
\[
{\cal R}(\rho) ~\stackrel{def}{=}~ ~\max_{i=1,\ldots,d} ~{(\rho P)_i}/{\rho_i}
\]
and we let 
\be\label{2-6}
x_\rho ~\stackrel{def}{=}~ \sup\left\{ x > 0 :~ \log(1+x) \geq {\cal R}(\rho)~x\right\}. 
\ee

\begin{theorem}\label{pr2}  Suppose that conditions (A) and (B) are satisfied and  let a
  vector $\rho=(\rho_1,\ldots,\rho_d)$ satisfy \eqref{2-5}. Then for $\varepsilon > 0$, the
  vector 
  ${\bf \gamma}=(\gamma_1,\ldots,\gamma_d)$ defined by   
\be\label{2-8}
\gamma_i = 
\eps G_{ii}/\rho_i, \quad \text{ for all } \quad i=1,\ldots,d,
\ee
belongs to the set $\Gamma$ whenever 
\[
0 ~<~ \varepsilon ~<~  \min_{ 1\leq i\leq d} ~\frac{\rho_i}{G_{ii}} ~x_{\rho}. 
\]
\end{theorem}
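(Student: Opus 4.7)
The plan is to invoke Proposition~\ref{Gamma}(2), which reduces the task to exhibiting, for each $i\in\{1,\ldots,d\}$, a probability measure $\theta_i\in{\cal M}_1$ satisfying \eqref{2-7}. Since $\rho=(\rho-\rho P)G$ with $\rho-\rho P$ strictly positive and $G_{jj}\ge 1$, every $\rho_j$ is strictly positive, and hence $\gamma_j=\varepsilon G_{jj}/\rho_j>0$, which is the positivity hypothesis of Proposition~\ref{Gamma}(2). Furthermore, since ${\cal R}(\rho)<1$ and $x\mapsto\log(1+x)/x$ is strictly decreasing on $(0,+\infty)$ from $1$ to $0$, the threshold $x_\rho$ in \eqref{2-6} is the unique positive solution of $\log(1+x)={\cal R}(\rho)x$, and $\log(1+x)\ge {\cal R}(\rho)x$ holds for every $x\in[0,x_\rho]$.

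For each $i$ with $(\rho P)_i>0$, the proposed choice is
\[
\theta_i^j ~=~ \frac{\rho_j\, p_{ji}}{(\rho P)_i}, \qquad j=1,\ldots,d,
\]
which is indeed a probability vector since $\sum_j \rho_j p_{ji}=(\rho P)_i$. In the degenerate case $(\rho P)_i=0$, the positivity of $\rho$ forces $p_{ji}=0$ for every $j$, hence $Q_{ki}=0$ for every $k\neq i$, so the condition \eqref{2-7} is vacuous and $\theta_i$ can be chosen arbitrarily.

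To verify \eqref{2-7}, fix $i$ with $(\rho P)_i>0$ and $k\neq i$ with $Q_{ki}>0$ (so $G_{ki}>0$). Two identities drive the computation: the resolvent relation $G-GP=I$ gives $\sum_{j=1}^d G_{kj}\,p_{ji}=G_{ki}$ for $k\neq i$, and the standard Markov-chain identity $G_{kj}=Q_{kj}G_{jj}$ rewrites $Q_{kj}\gamma_j=\varepsilon G_{kj}/\rho_j$. Combining them,
\[
\sum_{j=1}^d \theta_i^j\, Q_{kj}\gamma_j
~=~ \frac{\varepsilon}{(\rho P)_i}\sum_{j=1}^d p_{ji}G_{kj}
~=~ \frac{\varepsilon\, G_{ki}}{(\rho P)_i}.
\]
Since $\varepsilon<\rho_j x_\rho/G_{jj}$ implies $Q_{kj}\gamma_j\le\gamma_j<x_\rho$ for every $j$, we have $\log(1+Q_{kj}\gamma_j)\ge {\cal R}(\rho)Q_{kj}\gamma_j$. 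Summing against $\theta_i$ and using ${\cal R}(\rho)=\max_\ell (\rho P)_\ell/\rho_\ell\ge (\rho P)_i/\rho_i$,
\[
\sum_{j=1}^d\theta_i^j\,\gamma_j^k
~\ge~ {\cal R}(\rho)\,\frac{\varepsilon\, G_{ki}}{(\rho P)_i}
~\ge~ \frac{\varepsilon\, G_{ki}}{\rho_i}.
\]
Meanwhile, since $G_{ki}>0$, the strict inequality $\log(1+x)<x$ gives $\gamma_i^k=\log(1+\varepsilon G_{ki}/\rho_i)<\varepsilon G_{ki}/\rho_i$, so $\gamma_i^k<\sum_j\theta_i^j\gamma_j^k$, as desired. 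Proposition~\ref{Gamma}(2) then concludes that $\gamma\in\Gamma$.

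The only delicate step is the choice of $\theta_i$: the weights $\theta_i^j\propto\rho_j\, p_{ji}$ are precisely what let the resolvent identity $\sum_j p_{ji}G_{kj}=G_{ki}$ appear, and the definition \eqref{2-6} of $x_\rho$ is calibrated so that $\log(1+x)\ge {\cal R}(\rho)x$ provides exactly the factor ${\cal R}(\rho)\rho_i/(\rho P)_i\ge 1$ needed to close the chain of inequalities; everything else is routine.
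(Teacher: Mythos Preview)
Your proof is correct and follows essentially the same route as the paper's: both invoke Proposition~\ref{Gamma}(2) with the choice $\theta_i^j=\rho_j p_{ji}/(\rho P)_i$, use the resolvent identity $\sum_j G_{kj}p_{ji}=G_{ki}$ for $k\neq i$ together with $G_{kj}=Q_{kj}G_{jj}$, and close the chain of inequalities via $\log(1+x)\ge{\cal R}(\rho)x$ on $[0,x_\rho]$ and $\log(1+x)<x$ for $x>0$. The only cosmetic difference is that you run the inequalities from $\sum_j\theta_i^j\gamma_j^k$ downward and extract the strict inequality from $\log(1+x)<x$, whereas the paper runs from $\gamma_i^k$ upward and extracts strictness from $\log(1+x)>{\cal R}(\rho)x$ at a term with $G_{kj}>0$; your explicit treatment of the degenerate case $(\rho P)_i=0$ is also a nice touch that the paper handles only implicitly.
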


\medskip

Theorem~\ref{pr1} and Theorem~\ref{pr2} provide a class of explicit Lyapunov
functions for Jackson networks. Indeed, for $\gamma_i =  \eps G_{ii}/\rho_i$, 
\[
\frac{\gamma _i}{G_{ii}} ~\Bigl( \frac{\mu_i}{1 +\gamma_i} -\nu_i \Bigr) ~=~ \eps
\left(\frac{\mu_i}{\rho_i + \eps G_{ii}} - \frac{\nu_i}{\rho_i}\right) ~>~ 0 
\]
if and only if 
\[
0 ~<~ \eps ~<~ ~\frac{\rho_i}{G_{ii}}  \left(\frac{\mu_i}{\nu_i} - 1\right).
\]
Hence,  using Theorem~\ref{pr1} together with Theorem~\ref{pr2} and the equality $G_{ji} = Q_{ji} G_{ii}$, one gets

\begin{cor}\label{cor1} Suppose that the conditions (A) and (B) are satisfied and let a
  vector 
   $\rho=(\rho_1,\ldots,\rho_d)$ satisfy the inequalities \eqref{2-5}. Then for $\varepsilon > 0$, the function 
\be\label{2-11}
h_{\eps,\rho}(x) ~=~ h_{\eps,\rho}(x^1,\ldots,x^d) ~=~ \sum_{i=1}^d ~\prod_{j=1}^d (1 + \eps G_{ji}/\rho_i)^{x^j}
\ee
satisfies 
\be\label{2-12}
\limsup_{|x|\to\infty} {\cal L}h_{\eps,\rho}(x)/h_{\eps,\rho}(x) ~=~ - ~\eps ~\min_{1\leq i \leq d}
~ \left( \frac{\mu_i}{\rho_i + \eps G_{ii}} - \frac{\nu_i}{\rho_i}\right) ~<~ 0 
\ee
whenever the vector $(\eps G_{ii}/\rho_i, 1\le i \le d)$ belongs to $\Gamma$ and 
\[
 0 ~<~ \varepsilon ~<~  
 ~\min_{ 1\leq i\leq d} ~\frac{\rho_i}{G_{ii}}\left(\frac{\mu_i}{\nu_i} - 1\right),
\]
or sufficiently, whenever
\be \label{2-9}
 0 ~<~ \varepsilon ~<~  
 ~\min_{ 1\leq i\leq d} \left\{ \min \left\{\frac{\rho_i}{G_{ii}} ~x_{\rho},~\frac{\rho_i}{G_{ii}}\left(\frac{\mu_i}{\nu_i} - 1\right) \right\}\right\}.
\ee

\end{cor}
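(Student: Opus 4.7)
The plan is straightforward: this corollary is purely a repackaging of Theorem~\ref{pr1} and Theorem~\ref{pr2}, glued together by the elementary occupation identity $G_{ji}=Q_{ji}G_{ii}$ for the absorbing chain $(\xi_n)$ on $\{0,1,\ldots,d\}$. I would first establish that identity (it follows from the strong Markov property applied to $\tau_i$: either the chain never hits $i$ starting from $j$ and contributes $0$ to $G_{ji}=\sum_n(P^n)_{ji}$, or it hits $i$ first with probability $Q_{ji}$ and then accumulates a further expected $G_{ii}$ visits), and then reduce everything to a substitution into the two theorems.

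Second, I would perform the substitution. Fix $\rho$ satisfying \eqref{2-5} and set $\gamma_i=\varepsilon G_{ii}/\rho_i$. Then by \eqref{2-1} and $G_{ji}=Q_{ji}G_{ii}$,
\[
\gamma_i^j=\log\bigl(1+Q_{ji}\gamma_i\bigr)=\log\bigl(1+\varepsilon G_{ji}/\rho_i\bigr),
\]
so that $\exp(\overrightarrow{\gamma_i}\cdot x)=\prod_{j=1}^d(1+\varepsilon G_{ji}/\rho_i)^{x^j}$ and therefore $h_\gamma(x)=h_{\varepsilon,\rho}(x)$ with $h_\gamma$ as in \eqref{2-3}. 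Assuming this $\gamma$ lies in $\Gamma$, Theorem~\ref{pr1} gives
\[
\limsup_{|x|\to\infty}{\cal L}h_{\varepsilon,\rho}(x)/h_{\varepsilon,\rho}(x)
=-\min_{1\le i\le d}\frac{\gamma_i}{G_{ii}}\Bigl(\frac{\mu_i}{1+\gamma_i}-\nu_i\Bigr)
=-\varepsilon\min_{1\le i\le d}\Bigl(\frac{\mu_i}{\rho_i+\varepsilon G_{ii}}-\frac{\nu_i}{\rho_i}\Bigr),
\]
which is exactly \eqref{2-12} as an identity. For the strict negativity, each factor $\mu_i/(\rho_i+\varepsilon G_{ii})-\nu_i/\rho_i$ is positive iff $\varepsilon<(\rho_i/G_{ii})(\mu_i/\nu_i-1)$, which supplies the upper bound on $\varepsilon$ in the first (assumption-based) form of the corollary.

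Finally, for the sufficient condition \eqref{2-9}, I invoke Theorem~\ref{pr2}: it guarantees that $\gamma=(\varepsilon G_{ii}/\rho_i)_i\in\Gamma$ as soon as $\varepsilon<\min_i(\rho_i/G_{ii})\,x_\rho$. Combining this with the sign bound above yields \eqref{2-9}. There is essentially no obstacle here: the only non-cosmetic ingredient is the occupation identity $G_{ji}=Q_{ji}G_{ii}$, everything else is algebraic rearrangement of the conclusions of Theorems \ref{pr1} and \ref{pr2}.
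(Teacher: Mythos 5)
Your proposal is correct and takes the same route as the paper: substitute $\gamma_i=\eps G_{ii}/\rho_i$, use $G_{ji}=Q_{ji}G_{ii}$ to identify $h_\gamma$ with $h_{\eps,\rho}$, apply Theorem~\ref{pr1} for the limsup identity, and invoke Theorem~\ref{pr2} for the sufficient condition \eqref{2-9}. The only addition is your (correct) strong-Markov justification of the identity $G_{ji}=Q_{ji}G_{ii}$, which the paper uses without comment.
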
 

\medskip

In the above results, one can replace the vector $\rho$ satisfying the
inequalities \eqref{2-5} by a vector $\beta G$ with  $\beta
= (\beta_1,\ldots,\beta_d)$ having  positive components $\beta_i > 0$, since as previously
mentioned,   $\beta = \rho - \rho P$ is equivalent to  $\rho = \beta G$. Moreover,
by changing if necessary $\eps$,  one can assume 
that such a vector $\beta = (\beta_1,\ldots,\beta_d)$ defines a probability measure on the
set $\{1,\ldots,d\}$. Then for any $i=1,\ldots,d$, 
\[
\rho_i/G_{ii} ~=~ \sum_{j=1}^d \beta_j G_{ji}/G_{ii} ~=~ \sum_{j=1}^d \beta_j Q_{ji} ~~\stackrel{def}{=}~
Q_{\beta i} 
\]
is the probability that a Markov chain on $\{1,\ldots,d\}$ with transition matrix $P$
and initial distribution $\beta$ ever hits the state $i$.

\bigskip

\subsection{Estimates of the essential spectral radius}
Now, we get  some explicit estimates for the essential spectral
radius $r_e^*$.  The following lower bound  is obtained by using the 
 large deviation results of the papers \cite{Ignatiouk:01,Ignatiouk:05}. 
\begin{theorem}\label{theorem_3} Under the hypotheses (A) and (B), 
\be\label{2-13}
- \min_{1\leq i\leq d} ~\frac{1}{G_{ii}} (\sqrt{\mu_i} - \sqrt{\nu_i})^2 \leq \log r_e^* 
\ee
\end{theorem}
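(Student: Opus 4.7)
The plan is to use the variational representation \eqref{eq1-9} of $\log r_e^*$ from \cite{Ignatiouk:05}: it suffices to exhibit, for each $i\in\{1,\ldots,d\}$, a cyclic path $\phi_i$ with $\phi_i(t)\neq 0$ on $(0,1)$ whose rate function satisfies
\[
I_{[0,1]}(\phi_i)\;\leq\;\frac{1}{G_{ii}}\bigl(\sqrt{\mu_i}-\sqrt{\nu_i}\bigr)^2,
\]
since minimising over $i$ in \eqref{eq1-9} then yields \eqref{2-13}.

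For each $i$, the natural candidate is the constant path $\phi_i(t)\equiv c\,\epsilon^i$ for some $c>0$: it is trivially cyclic and bounded away from the origin, and its rate function equals $L(c\epsilon^i,0)$, the local Lagrangian of the Jackson LDP at a boundary point with zero velocity. The heuristic behind the target bound comes from rewriting it as $(\sqrt{\mu_i/G_{ii}}-\sqrt{\nu_i/G_{ii}})^2$, the classical one-dimensional Cram\'er rate for an M/M/1 queue with arrival rate $\nu_i/G_{ii}=\sum_j\lambda_j Q_{ji}$ (the rate of externally arriving customers that will eventually visit queue $i$) and final-departure rate $\mu_i/G_{ii}$ (the rate at which a customer at queue $i$ leaves the network without ever returning to $i$). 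Intuitively, when the scaled process remains close to $c\epsilon^i$, the ancillary queues are stochastically bounded and effectively integrate out, so queue $i$ obeys this one-dimensional dynamics at leading logarithmic order.

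The main difficulty is to turn this heuristic into a rigorous identification $L(c\epsilon^i,0)=\frac{1}{G_{ii}}(\sqrt{\mu_i}-\sqrt{\nu_i})^2$. A direct Cram\'er-Legendre calculation applied to the full family of jump rates on the $i$-axis (with forbidden jumps from empty queues) produces a strictly larger value, since it disregards the bounded, cost-free $O(1)$ excursions of the auxiliary queues allowed at this scale. The clean route is to invoke the explicit form of the sample path LDP rate function on the boundary of $\R_+^d$ established in \cite{Ignatiouk:01,Ignatiouk:04} and specialise it to $v=0$ at a point on the $i$-axis. A more probabilistic alternative is to build an exponential change of measure on $(Z(t))$ that holds queue $i$ fluid-stationary at $c\epsilon^i$ while keeping the other queues stable, to optimise over all admissible tilts, and to identify the resulting minimal relative entropy rate per unit time with the Cram\'er rate of the effective M/M/1 with parameters $(\nu_i/G_{ii},\mu_i/G_{ii})$. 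Either way, substitution into \eqref{eq1-9} and minimisation over $i$ gives \eqref{2-13}; the analysis recalled after \eqref{eq1-10} shows that this bound is in fact tight for $d\leq 2$, consistent with the expected sharpness of the constant axis path in low dimensions.
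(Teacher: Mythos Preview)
Your overall strategy---plug constant paths on the coordinate axes into the variational formula \eqref{eq1-9} and evaluate the local rate function $L(c\epsilon^i,0)$ via the explicit representation from \cite{Ignatiouk:01}---is exactly the paper's. You also correctly locate the hard step: the identification of $L(c\epsilon^i,0)$. But there is a genuine gap, and it is not merely that you have left that computation to a citation.

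The problem is that you set out to prove $L(c\epsilon^i,0)=\frac{1}{G_{ii}}(\sqrt{\mu_i}-\sqrt{\nu_i})^2$ for \emph{every} $i$, and this is false in general. Using the explicit form, $L(c\epsilon^i,0)=-\inf_{\alpha\in{\cal B}_{\{i\}}}R(\alpha)$, and restricting to the equality set inside ${\cal B}_{\{i\}}$ only gives the easy inequality $L(c\epsilon^i,0)\geq\frac{1}{G_{ii}}(\sqrt{\mu_i}-\sqrt{\nu_i})^2$, which points the wrong way for your purpose. The reverse inequality can fail: for two queues in tandem ($p_{12}=1$, $p_{20}=1$) with, say, $\lambda_1=\lambda_2=1$, $\mu_1=4$, $\mu_2=100$, the unconstrained minimiser of $R$ lies in the interior of ${\cal B}_{\{2\}}$ and gives a strictly smaller value than the boundary minimum $-(\sqrt{\mu_2}-\sqrt{\nu_2})^2$. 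Hence $L(c\epsilon^2,0)>(\sqrt{\mu_2}-\sqrt{\nu_2})^2/G_{22}$, and your ``for each $i$'' plan cannot be completed. Your effective M/M/1 heuristic breaks down here precisely because queue~$1$, though empty on the fluid scale, is itself near criticality under the optimal tilt and contributes nontrivially to the cost.

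What the paper actually proves---and what is enough for \eqref{2-13}---is the equality only for the index $i_0$ realising $\min_i\frac{1}{G_{ii}}(\sqrt{\mu_i}-\sqrt{\nu_i})^2$. The argument (Lemma~\ref{lem5-1}) is a convexity-based improvement step: starting from any $\alpha\in{\cal B}_{\{i_0\}}$ with some inequality constraint strict, one interpolates in the parametrisation of Proposition~\ref{prop_background} towards the point $\overrightarrow{\gamma_j}$ to force equality at coordinate $j$, obtaining $\tilde\alpha$ with $R(\tilde\alpha)<\max\{R(\alpha),\,-\frac{1}{G_{jj}}(\sqrt{\mu_j}-\sqrt{\nu_j})^2\}$. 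Iterating and then invoking minimality of $i_0$ (so that the second term in the max never beats the target) pushes the infimum onto the equality set, where Lemma~\ref{lemma_background} gives the claimed value. Minimality of $i_0$ is used essentially here; without it the argument collapses, consistent with the counterexample above. Neither of your proposed routes---``invoke and specialise'' or an unspecified change of measure---supplies this step.
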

The proof of this Theorem is given in Section~\ref{sec4}.

\bigskip

To get an upper bound for $r_e^*$ we use Theorem~\ref{pr1} and
Theorem~\ref{pr2}. Recall that  under assumptions (A) and (B), the quantity $\theta_e^* = - \log r_e^*$ is equal to the supremum
of all $\theta >0$ for which there exists a finite set $E\subset\Z_+^d$ and 
a multiplicative Lyapunov function $f : \Z_+^d \to \R_+$ satisfying the inequality  \eqref{eq1-6} and \eqref{eq1-6p}. 
The following statement is therefore a straightforward consequence of Theorem~\ref{pr1}. 
\begin{cor}\label{cor2} Under the hypotheses (A) and (B), 
\be\label{2-14}
\log r_e^* ~\leq~ - ~\sup_{\gamma\in\Gamma}~\min_{1\leq i \leq d}
~ \frac{\gamma _i}{G_{ii}} ~\Bigl( \frac{\mu_i}{1 +\gamma_i} -\nu_i \Bigr)
\ee
\end{cor}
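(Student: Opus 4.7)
The plan is to combine Theorem~\ref{pr1} with the variational characterization of $\theta_e^* = -\log r_e^*$ recorded just before the corollary: $\theta_e^*$ is the supremum of those $\theta > 0$ for which some finite set $E\subset\Z_+^d$ and some function $h:\Z_+^d\to\R_+$ jointly satisfy \eqref{eq1-6} and \eqref{eq1-6p}. So it is enough to exhibit, for each $\gamma\in\Gamma$ with
\[
\theta(\gamma) ~\stackrel{def}{=}~ \min_{1\le i\le d}\frac{\gamma_i}{G_{ii}}\Bigl(\frac{\mu_i}{1+\gamma_i}-\nu_i\Bigr) ~>~ 0
\]
and each $0 < \theta < \theta(\gamma)$, a finite set $E$ on whose complement $h_\gamma$ is a multiplicative Lyapunov function of rate $\theta$.

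Given such $\gamma$ and $\theta$, Theorem~\ref{pr1} yields $\limsup_{|x|\to\infty}{\cal L}h_\gamma(x)/h_\gamma(x) = -\theta(\gamma) < -\theta$. By the very definition of $\limsup$, the set $\{x\in\Z_+^d : {\cal L}h_\gamma(x)/h_\gamma(x) > -\theta\}$ is bounded, hence finite, and serves as the desired $E$ on whose complement \eqref{eq1-6} holds. To verify \eqref{eq1-6p}, observe that by \eqref{2-1} each coordinate $\gamma_i^j = \log(1+Q_{ji}\gamma_i)$ is non-negative, so every exponential in the sum defining $h_\gamma(x)$ is at least $1$ on $\Z_+^d$. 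Thus $h_\gamma(x) \geq d$ uniformly in $x$, whence $c_E(h_\gamma) \geq d > 0$. This establishes $\theta_e^* \geq \theta$; letting $\theta\uparrow\theta(\gamma)$ and then taking the supremum over $\gamma\in\Gamma$ delivers $\theta_e^* \geq \sup_{\gamma\in\Gamma}\theta(\gamma)$, which is exactly \eqref{2-14}.

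I do not anticipate any substantive obstacle: the statement is genuinely a one-step corollary, since all the analytic work was absorbed into Theorem~\ref{pr1} and into the characterization of $\theta_e^*$ as the optimal multiplicative Lyapunov rate. The only minor point worth flagging is the degenerate case $\sup_{\gamma\in\Gamma}\theta(\gamma) \leq 0$, where the right-hand side of \eqref{2-14} is non-negative; here \eqref{2-14} follows trivially from the positive recurrence granted by hypothesis~(B), which forces $\log r_e^* \leq 0$. Note also that $\Gamma$ is non-empty by Theorem~\ref{pr2}, so the supremum is taken over a non-trivial set.
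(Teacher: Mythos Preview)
Your proposal is correct and follows precisely the paper's own reasoning: the paper does not give a separate proof of this corollary, stating only that it is ``a straightforward consequence of Theorem~\ref{pr1}'' via the characterization of $\theta_e^* = -\log r_e^*$ as the supremum of admissible multiplicative Lyapunov rates. You have simply written out that straightforward consequence in full, including the verification of \eqref{eq1-6p} and the degenerate case, neither of which the paper bothers to mention.
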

Recall  moreover that  $\Gamma\subset \R_+^d$ and remark that the function 
\[
\gamma ~\to~ ~\min_{1\leq i \leq d}
~ \frac{\gamma _i}{G_{ii}} ~\Bigl( \frac{\mu_i}{1 +\gamma_i} -\nu_i \Bigr) 
\] 
is continuous in $\R_+^d$. Hence, in the right hand side of \eqref{2-14}, one can replace 
the supremum over the set $\Gamma$ by the supremum over the closure $\overline\Gamma$ of the set $\Gamma$ in $\R^d$.

\bigskip

Now the question arises of  a possible equality in  \eqref{2-13}. This equality holds in particular if
the upper bound given by \eqref{2-14} coincides with the lower bound in 
\eqref{2-13}.   
In this respect, remark that for every $i=1,\ldots,d$ the maximum of the function
\[
\gamma_i  ~\to~ \gamma _i ~\Bigl( \frac{\mu_i}{1 +\gamma_i} -\nu_i \Bigr)
\] 
over $\gamma_i \in\R_+$ is achieved at the point $\gamma_i^* ~=~ \sqrt{\mu_i/\nu_i} -1$ and equals $
(\sqrt{\mu_i} - \sqrt{\nu_i})^2$. Hence, if
$\gamma^*=(\gamma^*_1,\ldots,\gamma_d^*)\in\overline\Gamma$, then one  gets equality in
\eqref{2-13}. More generally, denote by $\Delta_i$ the set of all  
$\gamma \in\R_+$ satisfying the inequality  
\[
 \frac{\gamma }{G_{ii}} ~\Bigl( \frac{\mu_i}{1 +\gamma} -\nu_i \Bigr) ~\geq~ \min_{1\leq
  j\leq d} ~\frac{1}{G_{jj}} (\sqrt{\mu_j} - \sqrt{\nu_j})^2.  
\]
Under our assumptions,  $\Delta_i$  is a closed interval  such that
$\gamma_i^*\in\Delta_i\subset ]0,\mu_i/\nu_i-1[$ and clearly, 
$\Delta_i=\{\gamma_i^*\}$ for all those $i=1,\ldots,d$ for which  
\[
\min_{1\leq j\leq d} ~\frac{1}{G_{jj}} (\sqrt{\mu_j} - \sqrt{\nu_j})^2 ~=~ \frac{1}{G_{ii}} (\sqrt{\mu_i} - \sqrt{\nu_i})^2.
\]
Hence, using the estimates \eqref{2-13} and \eqref{2-14} one  will get the equality in \eqref{2-13} 
if there exists $\gamma=(\gamma_1,\ldots,\gamma_d)\in\overline\Gamma$ with $\gamma_i\in\Delta_i$ for all
$i=1,\ldots,d$. In Section~\ref{examples}, we
give several examples where these arguments 
allow to get the equality in \eqref{2-13}. Unfortunately, in the general case, the right
hand side of \eqref{2-14} is not necessarily equal 
to the left hand side of \eqref{2-13}  (see Proposition~\ref{prop3-6} in
Section~\ref{examples} below).  In the general case, using Corollary~\ref{cor1} we obtain 

\begin{cor}\label{cor3} Under the hypotheses (A) and (B), 
\[ 
\log r_e^* ~\leq~  - ~\sup_{\rho, \eps} ~\eps ~\min_{1\leq i \leq d}
~ \left( \frac{\mu_i}{\rho_i + \eps G_{ii}} - \frac{\nu_i}{\rho_i}\right) ~<~ 0
\]
where the supremum $\sup_{\eps,\rho}$ is taken over all $\eps > 0$ and $\rho=(\rho_1,\ldots,\rho_d)$  satisfying 
\eqref{2-5}  and \eqref{2-9}. 
\end{cor}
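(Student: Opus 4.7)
The plan is to derive Corollary~\ref{cor3} by specializing the general bound from Corollary~\ref{cor2}  to the explicit one-parameter family of vectors in $\Gamma$ produced by Theorem~\ref{pr2}. Concretely, since Corollary~\ref{cor2} asserts
\[
\log r_e^* ~\leq~ -~\sup_{\gamma\in\Gamma}~\min_{1\leq i\leq d} \frac{\gamma_i}{G_{ii}}\Bigl(\frac{\mu_i}{1+\gamma_i}-\nu_i\Bigr),
\]
any subset of $\Gamma$ yields a valid (weaker) bound. I would restrict the supremum to those $\gamma$ of the form $\gamma_i = \eps G_{ii}/\rho_i$, where $\rho$ satisfies \eqref{2-5} and $\eps$ satisfies \eqref{2-9}, because this is precisely the family that Theorem~\ref{pr2} and Corollary~\ref{cor1} guarantee to lie inside $\Gamma$.

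Next I would carry out the elementary algebraic simplification already performed in the statement of Corollary~\ref{cor1}: substituting $\gamma_i = \eps G_{ii}/\rho_i$ gives
\[
\frac{\gamma_i}{G_{ii}}\Bigl(\frac{\mu_i}{1+\gamma_i}-\nu_i\Bigr) ~=~ \frac{\eps}{\rho_i}\Bigl(\frac{\mu_i \rho_i}{\rho_i+\eps G_{ii}}-\nu_i\Bigr) ~=~ \eps\Bigl(\frac{\mu_i}{\rho_i+\eps G_{ii}}-\frac{\nu_i}{\rho_i}\Bigr).
\]
Taking the infimum over $i$ and the supremum over admissible $(\eps,\rho)$ immediately yields the first inequality of the corollary. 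Strictly speaking, Corollary~\ref{cor1} already records that this quantity equals $\limsup_{|x|\to\infty} {\cal L}h_{\eps,\rho}(x)/h_{\eps,\rho}(x)$, so the bound can equivalently be read off Theorem~\ref{pr1} directly, without even invoking Corollary~\ref{cor2}.

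It remains to show that the supremum is strictly positive, which gives the strict inequality $\log r_e^* < 0$. I would fix any $\rho$ satisfying \eqref{2-5} (such $\rho$ exist under (A), as explained after \eqref{2-5}) and study the behaviour as $\eps \to 0^+$. Each factor $\frac{\mu_i}{\rho_i+\eps G_{ii}} - \frac{\nu_i}{\rho_i}$ converges to $(\mu_i-\nu_i)/\rho_i$, which is strictly positive by the stability assumption (B). Hence the minimum over $i$ is bounded below by a positive constant for all sufficiently small $\eps$, and the constraint \eqref{2-9} is also satisfied for all sufficiently small $\eps$. Consequently the supremum is strictly positive, giving $\log r_e^* < 0$.

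No step looks seriously difficult: the corollary is essentially a packaging of Corollary~\ref{cor1} plus Corollary~\ref{cor2}, and the only substantive point is the short $\eps\downarrow 0$ argument that yields strict negativity. The real work has been done upstream (in Theorems~\ref{pr1} and~\ref{pr2}); here I would just be careful to check that the admissibility conditions \eqref{2-5} and \eqref{2-9} are simultaneously satisfiable — which, as noted, follows from (A) and the continuity of $\eps \mapsto \min_i \rho_i(\mu_i/\nu_i-1)/G_{ii}$ and of $\eps \mapsto \rho_i x_\rho / G_{ii}$ away from $\eps=0$.
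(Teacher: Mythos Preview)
Your proposal is correct and matches the paper's approach: the paper simply states that Corollary~\ref{cor3} follows from Corollary~\ref{cor1}, which is exactly the route you describe (restricting the supremum in Corollary~\ref{cor2} to the explicit family $\gamma_i=\eps G_{ii}/\rho_i$, or equivalently reading the bound directly off Corollary~\ref{cor1}). One small simplification: your $\eps\downarrow 0$ argument for strict positivity is unnecessary, since condition~\eqref{2-9} already forces $\eps G_{ii}/\rho_i < \mu_i/\nu_i - 1$, i.e.\ $\mu_i/(\rho_i+\eps G_{ii}) > \nu_i/\rho_i$ for every $i$, so each term in the minimum is strictly positive for \emph{every} admissible $(\eps,\rho)$, not just small $\eps$.
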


\section{Examples}\label{examples}
In this section, we give some examples for which  the above results can be applied and in
particular,  equality  in \eqref{2-13} is obtained by using Corollary~\ref{cor2}.  

\subsection{Jackson network with a branching routing matrix \bf  P}  We will say that a
matrix  $A = (a_{ji}, \;
i,j=1,\ldots,d)$  has a {\em branching structure} if for any
$i\in\{1,\ldots,d\}$ the set $\{j \in\{1,\ldots,d\} :~ a_{ji} > 0\}$, contains at 
most one element.

Recall that under our assumptions, $p_{ii}=0$
for all $i\in\{1,\ldots,d\}$. Hence, for $d=2$, any routing matrix
$P=(p_{ij},\; i,j=1,\ldots,d)$ has a branching structure. For $d > 2$, an example of
a graph  corresponding to a branching routing matrix $P$, with  vertices $\{1,\ldots,d\}$ and ordered
edges $(i\to j)$ for $i,j\in\{1,\ldots,d\}$ such that $p_{ij} > 0$, is given in
Figure~1. 

\begin{figure}[ht]
\resizebox{7.5cm}{5cm}{\includegraphics{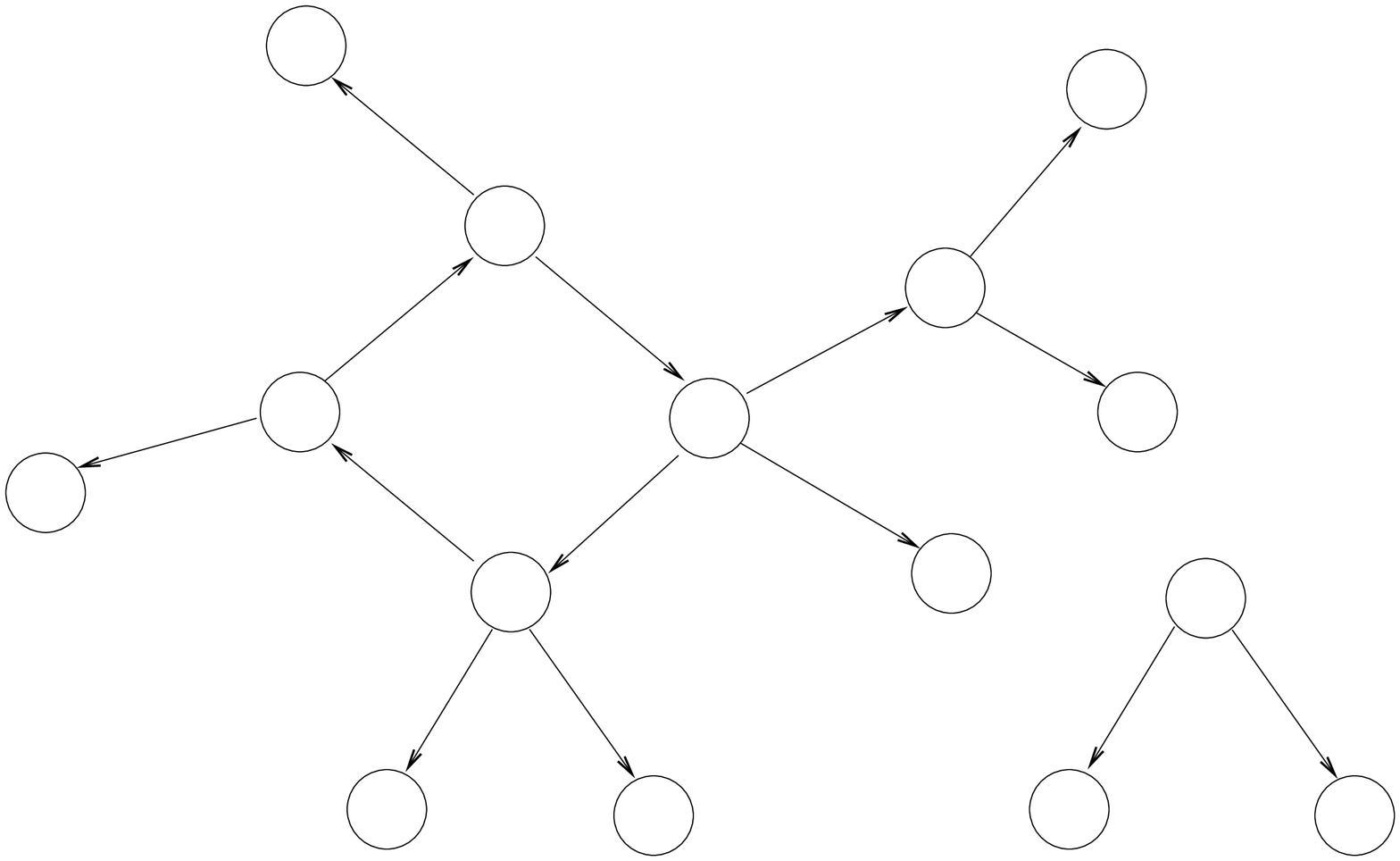}}
\caption{ }
\end{figure}  

\medskip 

\begin{prop}\label{prop3-1} Suppose that the conditions (A) and (B) are satisfied and let
  the routing matrix $P$ have a branching structure. Then the vector
  $\gamma=(\gamma_1,\ldots,\gamma_d)$ defined by \eqref{2-8} belongs to the set $\Gamma$
  for any $\eps >0$ and any vector $\rho=(\rho_1,\ldots,\rho_d)$ satisfying the inequalities \eqref{2-5}. 
\end{prop}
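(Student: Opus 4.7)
The plan is to invoke part~2 of Proposition~\ref{Gamma}. Since $\eps > 0$, $G_{ii}\geq 1$ and $\rho_i > 0$, the components $\gamma_i = \eps G_{ii}/\rho_i$ are all strictly positive, so the sufficient condition of that proposition applies. For each $i\in\{1,\ldots,d\}$ the task reduces to exhibiting a probability measure $\theta_i\in{\cal M}_1$ such that \eqref{2-7} holds for every $k\neq i$ with $Q_{ki}>0$.

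The choice of $\theta_i$ is dictated by the branching structure. By hypothesis, the set $\{j : p_{ji}>0\}$ is either empty or a singleton; in the latter case I denote its unique element by $\pi(i)$. If the set is empty then no trajectory of the auxiliary chain $(\xi_n)$ can enter $i$ from elsewhere, so $Q_{ki}=0$ for every $k\neq i$ and there is nothing to verify; any $\theta_i$ then works. Otherwise I take $\theta_i$ to be the unit mass at $\pi(i)$.

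With this choice, a short computation reduces the condition \eqref{2-7} to a single scalar inequality. First, any trajectory of $(\xi_n)$ that reaches $i$ must do so via $\pi(i)$, so the strong Markov property applied at the first hitting time of $\pi(i)$ yields
\[
Q_{ki} \;=\; Q_{k,\pi(i)}\,Q_{\pi(i),i}\qquad\text{for every } k\neq i.
\]
Hence $Q_{k,\pi(i)}>0$ whenever $Q_{ki}>0$, and after exponentiating \eqref{2-7} the full family of inequalities $\gamma_i^k<\gamma_{\pi(i)}^k$ collapses to
\[
Q_{\pi(i),i}\,\gamma_i \;<\; \gamma_{\pi(i)}.
\]
Second, I evaluate $Q_{\pi(i),i}$ via the resolvent identity $G = I + GP$: since only the entry $p_{\pi(i),i}$ is nonzero in the $i$-th column of $P$, one gets $G_{\pi(i),i} = p_{\pi(i),i}\,G_{\pi(i),\pi(i)}$, and combining this with $G_{\pi(i),i}=Q_{\pi(i),i}\,G_{ii}$ gives $Q_{\pi(i),i} = p_{\pi(i),i}\,G_{\pi(i),\pi(i)}/G_{ii}$. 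Substituting this together with \eqref{2-8} into the displayed inequality, the factors $\eps$, $G_{ii}$ and $G_{\pi(i),\pi(i)}$ all cancel and I am left with
\[
\rho_{\pi(i)}\,p_{\pi(i),i} \;<\; \rho_i,
\]
which is precisely component~$i$ of \eqref{2-5}, since $(\rho P)_i = \rho_{\pi(i)}\,p_{\pi(i),i}$ in the branching case.

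I do not anticipate any serious obstacle. The branching hypothesis is used twice in an essential way: it makes the predecessor $\pi(i)$ unique, which both underlies the factorization of $Q_{ki}$ and collapses the sum defining $(GP)_{\pi(i),i}$ to a single term. The only point deserving care is the strong Markov decomposition of $Q_{ki}$, which however is immediate once one notes that $\pi(i)$ is the unique gateway to $i$. Note finally that the argument is completely independent of $\eps$, which matches the statement that \eqref{2-8} lies in $\Gamma$ for \emph{every} $\eps>0$ in the branching case, whereas the general Theorem~\ref{pr2} requires a smallness condition on $\eps$.
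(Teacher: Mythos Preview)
Your proof is correct and takes essentially the same approach as the paper: both invoke part~2 of Proposition~\ref{Gamma} with $\theta_i$ the point mass at the unique predecessor $\pi(i)$, and both rely on the key identity $G_{ki}=G_{k,\pi(i)}\,p_{\pi(i),i}$ for $k\neq i$ together with $(\rho P)_i<\rho_i$. The only cosmetic difference is that the paper works directly with $\gamma_i^k=\log(1+\eps G_{ki}/\rho_i)$ and compares logarithms, whereas you first factor $Q_{ki}=Q_{k,\pi(i)}Q_{\pi(i),i}$ to reduce everything to the single scalar inequality $Q_{\pi(i),i}\gamma_i<\gamma_{\pi(i)}$ before unwinding it via $G$; the underlying computation is the same.
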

\begin{proof} We get this statement as a consequence of the second assertion of Proposition~\ref{Gamma}.  Indeed, let a vector $\rho$ satisfy the inequalities \eqref{2-5}. Consider a vector $\gamma = (\gamma_1,\ldots,\gamma_d)$ defined by \eqref{2-8} with some given  $\eps > 0$. Then obviously, $\gamma_i > 0$ for all $i\in\{1,\ldots,d\}$. Let us show that under the hypotheses of our proposition, \eqref{2-7} holds for any $i\in\{1,\ldots,d\}$.  If $i\in\{1,\ldots,d\}$ is such that 
$p_{ji} = 0$ for all $j\in\{1,\ldots,d\}\setminus\{i\}$ then also $Q_{ji} = 0$ for all $j\not=i$ and consequently \eqref{2-7} is trivial.  

Suppose now that for $i\in\{1,\ldots,d\}$, there is $j\in\{1, \ldots,d\}$ such that $p_{ji} > 0$. Then under the hypotheses of our proposition, such an index $j$ is unique, $j\not=i$, and 
\[ 
G_{ki} ~ =~ G_{kj}p_{ji} \quad \forall \; k\in\{1, \ldots,d\}\setminus\{i\}.
\] 
Moreover, from \eqref{2-5} it follows that $\rho_i > (\rho P)_i = \rho_j p_{ji}$ and consequently, 
\[
\gamma_i^k  ~ =~  \log(1 + \eps G_{ki}/\rho_i) ~ =~ \log(1 + \eps G_{kj} p_{ji}/\rho_i) ~ <~ \log(1 + \eps G_{kj} /\rho_j) ~ =~ \gamma_j^k
\]
for all those $k\in\{1, \ldots,d\}\setminus\{i\}$ for which $G_{kj} > 0$ or equivalently $Q_{ki} > 0$.  The last relations show that  \eqref{2-7} holds  with a unit vector $\theta_i = (\theta_i^1,\ldots,\theta_i^d)$ where $\theta_i^k = 1$ for $k\not= j$ and $\theta_i^j = 1$. Using therefore  the second assertion of Proposition~\ref{Gamma},  we conclude that $\gamma\in\Gamma$.
\end{proof}

 When combined with Theorem~\ref{pr1}, the above proposition implies the following particular version of Corollary~\ref{cor1}.

\begin{cor}\label{cor3-1} Suppose that the conditions (A) and (B) are satisfied and let the
  routing matrix have a branching structure. Suppose moreover that a vector
  $\rho=(\rho_1,\ldots,\rho_d)$ satisfies the inequalities \eqref{2-5}. Then the function $h_{\eps,\rho}$ defined by
  \eqref{2-11}  satisfies the inequality \eqref{2-12} 
  for any 
\[
0 ~<~ \eps ~<~ \min_{1\leq i \leq d}\frac{\rho_i}{G_{ii}}\left(\frac{\mu_i}{\nu_i} - 1\right).
\]
\end{cor}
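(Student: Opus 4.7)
The plan is to combine Proposition~\ref{prop3-1} with Corollary~\ref{cor1}, since together they cover exactly the assertion. Recall that Corollary~\ref{cor1} establishes \eqref{2-12} for $h_{\eps,\rho}$ under two conditions: first, that the vector $(\eps G_{ii}/\rho_i)_{1\le i\le d}$ lies in $\Gamma$, and second, that
\[
0 ~<~ \eps ~<~ \min_{1\leq i\leq d} \frac{\rho_i}{G_{ii}}\left(\frac{\mu_i}{\nu_i}-1\right).
\]
The sufficient hypothesis \eqref{2-9} appearing in Corollary~\ref{cor1} is strictly stronger than this, since the extra factor $\rho_i x_\rho / G_{ii}$ comes from invoking Theorem~\ref{pr2} to guarantee the first condition.

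First I would invoke Proposition~\ref{prop3-1} to note that, since $P$ has a branching structure, for every $\eps > 0$ and every $\rho$ satisfying \eqref{2-5}, the vector $\gamma$ defined by \eqref{2-8} automatically belongs to $\Gamma$. This makes the first condition in Corollary~\ref{cor1} vacuous and, crucially, removes the $x_\rho$-constraint from \eqref{2-9} altogether. Then I would directly apply Corollary~\ref{cor1}: only the second condition on $\eps$ remains, and that is precisely the bound on $\eps$ stated in Corollary~\ref{cor3-1}. The conclusion \eqref{2-12} for $h_{\eps,\rho}$ is then immediate from Theorem~\ref{pr1} via Corollary~\ref{cor1}.

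I expect no real obstacle here: the substantive input is Proposition~\ref{prop3-1}, whose role is exactly to suppress the $\rho_i x_\rho / G_{ii}$ term from the admissible range of $\eps$. The only thing to verify carefully is that the two pieces fit together cleanly --- that is, that the $\eps$-range allowed by Proposition~\ref{prop3-1} (namely all $\eps>0$) together with the $\eps$-range required by the ``$h_{\eps,\rho}$ is a genuine Lyapunov function'' part of Corollary~\ref{cor1} (namely $\eps < \min_i (\rho_i/G_{ii})(\mu_i/\nu_i-1)$) intersect to produce exactly the stated interval; this is immediate.
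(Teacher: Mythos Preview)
Your proposal is correct and matches the paper's own justification: the paper simply states that Corollary~\ref{cor3-1} follows by combining Proposition~\ref{prop3-1} with Theorem~\ref{pr1} (equivalently, via Corollary~\ref{cor1}), exactly as you describe. The key observation you make --- that Proposition~\ref{prop3-1} removes the $x_\rho$-constraint from \eqref{2-9}, leaving only the $\mu_i/\nu_i$ bound --- is precisely the content of the deduction.
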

From the last statement we obtain 
\begin{prop}\label{prop3-2} Suppose that the conditions (A) and (B) are satisfied and let
  either the routing matrix $P$ or its transposed matrix $^t\!P$ have a branching structure. Then 
\be\label{3-1}
\log r_e^* ~=~ - \min_{1\leq i\leq d} ~\frac{1}{G_{ii}} \left(\sqrt{\mu_i} - \sqrt{\nu_i}\right)^2 
\ee
\end{prop}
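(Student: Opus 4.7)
The plan is to combine the lower bound from Theorem~\ref{theorem_3}, namely
$\log r_e^* \ge -\min_i (\sqrt{\mu_i} - \sqrt{\nu_i})^2/G_{ii}$,
with a matching upper bound via Corollary~\ref{cor2}. By continuity of $\gamma\mapsto \min_i \frac{\gamma_i}{G_{ii}}(\frac{\mu_i}{1+\gamma_i}-\nu_i)$ the supremum in \eqref{2-14} extends to $\overline\Gamma$, and it therefore suffices to exhibit a vector $\gamma\in\overline\Gamma$ such that $\gamma_i\in\Delta_i$ for every $i$. For such a $\gamma$, each term $\frac{\gamma_i}{G_{ii}}(\frac{\mu_i}{1+\gamma_i}-\nu_i)$ is at least $\min_j (\sqrt{\mu_j}-\sqrt{\nu_j})^2/G_{jj}$, and Corollary~\ref{cor2} delivers the matching upper bound.

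When $P$ has branching structure, I would invoke Proposition~\ref{prop3-1}: every $\gamma$ of the form $\gamma_i=\eps G_{ii}/\rho_i$ with $\rho$ satisfying \eqref{2-5} belongs to $\Gamma$. The closure of this parameterized family sits inside $\overline\Gamma$ and contains all $\gamma\in(0,\infty)^d$ satisfying $Q_{j(i),i}\,\gamma_i\le \gamma_{j(i)}$ for each node $i$ having a predecessor $j(i)$ in $P$; the branching hypothesis reduces the $\overline\Gamma$ conditions to a single inequality per node. I would pick an index $i^*$ realizing the minimum in \eqref{3-1}, set $\gamma_{i^*}=\gamma_{i^*}^*:=\sqrt{\mu_{i^*}/\nu_{i^*}}-1$ (the unique element of $\Delta_{i^*}$), and determine the remaining coordinates by propagating along the predecessor relation: for each other node $m$, choose $\gamma_m\in\Delta_m$ satisfying the parent-child $Q$-inequality, using $\gamma_m=\gamma_m^*$ where compatible and adjusting only when a constraint forces it.

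For the case where $^tP$ has branching structure, I would reduce to the previous one via time reversal. The time-reversed process of $(Z(t))$ is again a Jackson network, with identical service rates $\tilde\mu_i=\mu_i$, identical traffic $\tilde\nu_i=\nu_i$, and routing matrix $\tilde p_{ij}=\nu_j p_{ji}/\nu_i$; then $\tilde P$ has branching structure precisely when $^tP$ does. Moreover $\tilde G_{ii}=G_{ii}$ since cycle probabilities are preserved under time reversal, and the essential spectral radii coincide ($\tilde r_e^*=r_e^*$) via the identity
$\tilde{\P}_x(\tilde Z(t)=y,\tilde\tau_E>t)=(\pi(y)/\pi(x))\,\P_y(Z(t)=x,\tau_E>t)$.
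Applying the already-established $P$-branching case to the reversed network then yields \eqref{3-1}. The main obstacle lies in the first case: the naive choice $\gamma=(\gamma_1^*,\ldots,\gamma_d^*)$ generally fails the $Q$-inequalities defining $\overline\Gamma$, so some $\gamma_m$ must be perturbed off $\gamma_m^*$ while staying inside $\Delta_m$; the minimality of $i^*$ combined with the single-parent structure of branching $P$ is what keeps these perturbations feasible.
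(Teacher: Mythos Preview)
Your time-reversal reduction for the ${}^t\!P$ case is correct and is exactly what the paper does (including the verification that $\tilde G_{ii}=G_{ii}$ and $\tilde r_e^*=r_e^*$).

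The gap is in the $P$-branching case. Your characterization of the closure of $\{(\eps G_{ii}/\rho_i)_i : \rho P<\rho\}$ as $\{\gamma>0 : Q_{j(i),i}\gamma_i\le\gamma_{j(i)}\text{ whenever $i$ has a predecessor}\}$ is correct, but the heart of the matter is then to \emph{produce} a $\gamma$ in this set with $\gamma_m\in\Delta_m$ for every $m$. You only assert that ``the minimality of $i^*$ combined with the single-parent structure\ldots keeps these perturbations feasible''; this is not an argument. The intervals $\Delta_m=[a_m,b_m]$ depend on $\mu_m,\nu_m,G_{mm}$ in a way that has no obvious relation to the quantities $Q_{j(i),i}$, the predecessor graph can contain cycles (e.g.\ $p_{12},p_{23},p_{31}>0$), and nothing you have written rules out that the constraints $Q_{j(i),i}\gamma_i\le\gamma_{j(i)}$ force some $\gamma_m$ outside $\Delta_m$. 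As it stands, your plan is a plausible strategy with the crucial step missing.

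The paper bypasses this combinatorial selection entirely. Starting from the parameterized bound
\[
\log r_e^* \;\le\; -\,\eps\,\min_{1\le i\le d}\Bigl(\frac{\mu_i}{\rho_i+\eps G_{ii}}-\frac{\nu_i}{\rho_i}\Bigr)
\]
(valid for all $\rho$ with $\rho P\le\rho$ after a closure argument), it observes via the Schwarz inequality that if $\rho P\le\rho$ then $\tilde\rho_i:=\sqrt{\nu_i\rho_i}$ again satisfies $\tilde\rho P\le\tilde\rho$ (using $\nu P\le\nu$ from the traffic equations). Substituting $\tilde\rho$ and taking $\eps=\min_i\frac{\sqrt{\rho_i}}{G_{ii}}(\sqrt{\mu_i}-\sqrt{\nu_i})$ gives
\[
\log r_e^* \;\le\; -\,\min_i\frac{\sqrt{\rho_i}}{G_{ii}}(\sqrt{\mu_i}-\sqrt{\nu_i})\;\times\;\min_i\frac{1}{\sqrt{\rho_i}}(\sqrt{\mu_i}-\sqrt{\nu_i}).
\]
The branching hypothesis is then used only for the single observation that $\rho_i=G_{ii}$ satisfies $\rho P\le\rho$ (since $\sum_k G_{kk}p_{ki}$ either vanishes or equals $G_{j(i),j(i)}\,p_{j(i),i}=G_{j(i),i}\le G_{ii}$); with this choice the product collapses to $\min_i(\sqrt{\mu_i}-\sqrt{\nu_i})^2/G_{ii}$, matching the lower bound from Theorem~\ref{theorem_3}. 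This route never needs to locate a point of $\overline\Gamma$ inside $\prod_i\Delta_i$.
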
 

\begin{proof} 
Suppose first that the routing matrix has a branching structure. Then by
  Corollary~\ref{cor3-1}, 
\be\label{3-2}
\log r_e^* ~\leq~ - \eps ~\min_{1\leq i \leq d} \left( \frac{\mu_i}{\rho_i + \eps G_{ii}}
- \frac{\nu_i}{\rho_i}\right) 
\ee
 for any $\eps > 0$ and any vector $\rho = (\rho_1,\ldots,\rho_d)$ satisfying the
 inequalities \eqref{2-5}, 
 or equivalently (see the
remark below Corollary~\ref{cor1}) for any $\rho = \beta G$ with
$\beta=(\beta_1,\ldots,\beta_d)$ having strictly positive components $\beta_i > 0$ for all 
$i=1,\ldots,d$. By taking  the limits  as $\beta_i\to 0$ for some indices
$i\in\{1,\ldots,d\}$ one gets \eqref{3-2} also for any  vector $\rho = \beta G$  with $\beta_i\geq
0$ and $\rho_i >0$ for all $i=1,\ldots,d$. Using again the equivalence between $\rho=\beta G$ and  $\beta =
\rho - \rho P$, these arguments prove the upper bound \eqref{3-2} for any  vector
$\rho=(\rho_1,\ldots,\rho_d)$ satisfying the inequalities 
\be\label{3-3}
 0 ~<~ \rho_i  \quad \text{and} \quad (\rho P)_i ~\leq~ \rho_i, \quad \forall i=1,\ldots,d. 
\ee
Remark now that according to the definition of the traffic equations \eqref{eq1-2}, 
\[
\nu_i ~=~ (\nu P)_i + \lambda_i ~\geq~ (\nu P)_i, \quad \forall i=1,\ldots, d.
\]
If a vector $\rho=(\rho_1,\ldots,\rho_d)$  satisfies the inequalities \eqref{3-3}, then
for $\tilde\rho = (\tilde\rho_1,\ldots, \tilde\rho_d)$ with
$\tilde\rho_i=\sqrt{\nu_i\rho_i}$, by  Schwarz inequality,
\[
(\tilde\rho P)_i ~=~ \sum_j \sqrt{\nu_j\rho_j} ~p_{ji} ~\leq~
\sqrt{\sum_j \nu_j
  p_{ji}}\sqrt{\sum_j \rho_j p_{ji}} ~\leq~ \sqrt{\nu_i\rho_i} = \tilde\rho_i, \quad
\forall i=1,\ldots,d,
\]
and consequently one can replace the quantities $\rho_i$ at the right hand side of
\eqref{3-2} by $\tilde\rho_i=\sqrt{\nu_i\rho_i}~$ (recall that $\nu _i>0$ for all $i=1, \ldots, d$, hence $\tilde\rho_i>0$ for all $i=1, \ldots, d$). The resulting inequality 
\[
\log r_e^* ~\leq~ - \eps ~\min_{1\leq i \leq d} \left( \frac{\mu_i}{\sqrt{\nu_i\rho_i} + \eps G_{ii}}
- \sqrt{\frac{\nu_i}{\rho_i}}\right) 
\]
with 
\[
\eps ~=~ \min_{1\leq i\leq d} ~\frac{\sqrt{\rho_i}}{G_{ii}}
~\left(\sqrt{\mu_i} - \sqrt{\nu_i}\right) 
\]
provides the following upper bound  
\be\label{3-4}
\log r_e^* ~\leq~ - ~\min_{1\leq i\leq d} ~\frac{\sqrt{\rho_i}}{G_{ii}}
~\left(\sqrt{\mu_i} - \sqrt{\nu_i}\right) \times~\min_{1\leq i\leq d} ~\frac{1}{\sqrt{\rho_i}}
~\left(\sqrt{\mu_i} - \sqrt{\nu_i}\right). 
\ee

Moreover, if a routing matrix $P$ has a branching structure, then for any
$i\in\{1,\ldots,d\}$, either $p_{ji}=0$ for all $j\in\{1,\ldots,d\}$ and
consequently, 
\[
\sum_{j=1}^d G_{jj} p_{ji}  = 0 ~\leq~ G_{ii},
\]
or else there is a unique $j\in\{1,\ldots,d\}$  such that $p_{ji} > 0$ and consequently,
\[
\sum_{k=1}^d G_{kk} p_{ki} ~=~ G_{jj}p_{ji} = G_{ji} \leq G_{ii}. 
\]
These relations show that the vector $\rho=(\rho_1,\ldots,\rho_d)$ with $\rho_i = G_{ii}$ satisfies
the inequalities \eqref{3-3}. Using \eqref{3-4}  with this vector $\rho$ we obtain 
\[
\log r_e^* ~\leq~ - ~\min_{1\leq i\leq d} ~\frac{1}{G_{ii}}
~\left(\sqrt{\mu_i} - \sqrt{\nu_i}\right)^2.
\]
The last inequality combined with \eqref{2-13} proves \eqref{3-1}.

Suppose now that the transposed matrix $^t\!P$ has a branching structure, and let us show
that in this case, the equality \eqref{3-1} also holds. For this we apply a time
reversing argument to the Markov process $(Z(t))$. The time reversed
Markov process $(\tilde{Z}(t))$ is generated by  
\[
 \tilde{\cal L}f
(y) = \sum_{z\in \Z^N_+} \tilde{q}(y,z) (f(z) - f(y)), \quad y\in \Z^N_+, 
\]
with 
\[
\tilde{q}(y,z) ~=~ \pi(z)q(z,y)/\pi(y).
\]
A straightforward calculation shows that this is also a Jackson network but with different
parameters: the arrivals at the $i$-th queue 
are Poisson with parameter $\tilde\lambda_i = \nu_ip_{i0}$, the 
services delivered by the server are exponentially distributed with the same parameter
$\tilde\mu_i = \mu_i$ as for the original Jackson network $(Z(t))$, and the routing matrix 
$(\tilde{p}_{ij}, i,j=0,\ldots,d)$ is given by  $\tilde{p}_{i0} ~=~ \lambda_i/\nu_i$ and $
\tilde{p}_{ij} ~=~ \nu_j p_{ji}/\nu_i$ for $i,j\in\{1,\ldots,d\}$. Under our assumptions, the time reversed Markov process
$(\tilde{Z}(t))$ also satisfies the conditions (A) and (B) with the same solution
$(\nu_i,\, i=1,\ldots,d)$ of the
traffic equations  and the same stationary probabilities $(\pi(x);\,x\in
\Z^N_+)$. Moreover, for any finite subset $E\subset\Z_+^d$, letting 
\[
\tilde\tau_E ~=~ \inf\{ t > 0 : \tilde{Z}(t) \in E\} \quad \text{ and } \quad \tau_E ~=~ \inf\{ t > 0 : Z(t) \in E\} 
\]
one gets 
\[
 \P_x( \tilde{Z}(t) = y, \; \tilde\tau_E > t ) ~=~\pi(y) \P_y( Z(t) = x, \; \tau_E
> t )/\pi(x), \quad \forall x,y\in\Z_+^d\setminus E
\]
and consequently, the essential spectral radius of the time reversed Markov process
$(\tilde{Z}(t))$ is the same as for the original Markov process $(Z(t))$. If the
transposed matrix $^t\!P$ has a branching structure, then the routing matrix $\tilde{P}
=(\tilde{p}_{ij}, \; i,j=1,\ldots,d)$ has the same property and consequently, the
above arguments applied to the time reversed Markov process $(\tilde{Z}(t))$ prove the
equality \eqref{3-1}. 
\end{proof}

\medskip

\subsection{Jackson networks with a completely symmetrical routing matrix $P$} 
Now we consider a Jackson
network having a completely symmetrical  routing matrix $P=(p_{ij}, \, i,j=1,\ldots,d)$
with $p_{ij} = p < 1/(d-1)$ for all $i\not=j$, $i,j\in\{1,\ldots,d\}$. Then $Q_{ij} = p/(1-(d-2)p)  \stackrel{def}{=} q$ for all $i\not=j$, $i,j\in\{1,\ldots,d\}$, where  $0<q<1$. The following
proposition provides an explicit form for the set $\Gamma$ in this case. To formulate this
result, it is convenient to introduce the  function 
\[
\Sigma(\gamma_1,\ldots,\gamma_d) ~=~ \sum_{j=1}^d \frac{\max_{1\leq i\leq d}
  ~\log(1 + q\gamma_i) - \log(1+q\gamma_j)}{\log(1+\gamma_j) - \log(1+q\gamma_j)}  
\]
for $\gamma \in \R_+^d$ satisfying $\gamma_j>0$ for all $j=1,\ldots,d$ (note that for such a $\gamma$, since $q<1$,   then $\log(1+\gamma_j) > \log(1+q\gamma_j)$  for all $j=1,\ldots,d$ and the above quantity is well-defined).

\begin{prop}\label{prop3-3} Suppose the conditions (A) and (B) are satisfied and let $p_{ij} = p < 1/(d-1)$ for all $i\not=j$,
  $i,j\in\{1,\ldots,d\}$.  Then  $\gamma=(\gamma_1,\ldots,\gamma_d)\in\Gamma$ if and only if $\gamma_i > 0$ for all
  $i\in\{1,\ldots,d\}$ and   
\be\label{3-5}
\Sigma(\gamma_1,\ldots,\gamma_d) ~<~ 1. 
\ee
\end{prop}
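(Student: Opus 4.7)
The plan is to unfold the defining condition of $\Gamma$ in this symmetric setting. Since $p_{ij}=p$ for all $i\neq j$, a direct computation from the definition of the $Q_{ji}$ yields $Q_{ii}=1$ and $Q_{ji}=q:=p/(1-(d-2)p)\in(0,1)$ for $j\neq i$, so that $\gamma_i^i=\log(1+\gamma_i)$ and $\gamma_i^j=\log(1+q\gamma_i)$ whenever $j\neq i$. Writing $a_j:=\log(1+q\gamma_j)$ and $b_j:=\log(1+\gamma_j)-\log(1+q\gamma_j)\geq 0$, and using that \eqref{2-2} is $1$-homogeneous in $v$, it is enough to check it for probability vectors $v$ on $\{1,\ldots,d\}$ with $v^i=0$. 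For such a $v$ one readily computes
\[
\overrightarrow{\gamma_j}\cdot v ~=~ a_j + v^j b_j \quad \text{for every } j\in\{1,\ldots,d\},
\]
so the defining condition at an index $i$ reduces to: for every probability vector $v$ on $\{1,\ldots,d\}\setminus\{i\}$, $a_i<\max_{j\neq i}[a_j+v^jb_j]$.

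Let $i^*$ be an index achieving $\max_j a_j$. For any $i$ with $a_i<a_{i^*}$ the above condition is automatic, since choosing $j=i^*$ in the maximum gives $a_{i^*}+v^{i^*}b_{i^*}\geq a_{i^*}>a_i$. Hence only the indices $i$ with $a_i=\max_j a_j$ need to be checked, and assuming for the moment $\gamma_j>0$ for every $j$ (so that all $b_j>0$), the condition fails at such an index $i$ iff there exists a probability vector $v$ on $\{1,\ldots,d\}\setminus\{i\}$ with $v^j\leq (a_i-a_j)/b_j$ for every $j\neq i$. Since the upper bounds $(a_i-a_j)/b_j$ are nonnegative, such a $v$ exists iff
\[
\sum_{j\neq i}\frac{a_i-a_j}{b_j} ~\geq~ 1,
\]
and (using that $a_i=\max_j a_j$, so that the $j=i$ term as well as the terms with $a_j=a_i$ vanish) this sum is exactly $\Sigma(\gamma)$. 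The equivalence $\gamma\in\Gamma\Leftrightarrow \Sigma(\gamma)<1$ therefore holds under the additional assumption that $\gamma_j>0$ for every $j$.

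It remains to argue that $\gamma\in\Gamma$ forces $\gamma_j>0$ for every $j$. If $\gamma=0$, then each $\overrightarrow{\gamma_j}$ is zero and \eqref{2-2} fails for any $i$ and any nonzero $v$. Otherwise, suppose $\gamma_{i_0}=0$ with $\gamma\neq 0$, and let $j_1\neq i_0$ attain $\max_{j\neq i_0}\gamma_j>0$; testing \eqref{2-2} at $i=j_1$ with $v=\epsilon^{i_0}$ (so that $v^{j_1}=0$) one obtains $\overrightarrow{\gamma_k}\cdot v=0$ for $k=i_0$ and $\log(1+q\gamma_k)$ otherwise, whence $\sup_k\overrightarrow{\gamma_k}\cdot v=\log(1+q\gamma_{j_1})=\overrightarrow{\gamma_{j_1}}\cdot v$, contradicting the required strict inequality. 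The main technical step is the LP-style feasibility argument in the middle paragraph: it has to be set up so that the reduction of the defining condition to a feasibility problem and the identification of the resulting sum with $\Sigma(\gamma)$ are transparent, and so that the possibly multiple indices attaining $\max_j a_j$ are handled uniformly via the cancellation of the corresponding zero terms in $\Sigma$.
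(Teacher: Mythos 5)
Your proof is correct and takes essentially the same approach as the paper's: using the symmetry $Q_{ji}=q$ for $j\neq i$ and the $1$-homogeneity of \eqref{2-2} to reduce to probability vectors, observing that only indices where $\gamma_i$ (equivalently $a_i=\log(1+q\gamma_i)$) is maximal need to be checked, and reading off the threshold $\Sigma(\gamma)\geq 1$ as a linear feasibility condition for the vector $v$. The paper argues the two implications of this feasibility step separately (and derives $\gamma_k>0$ for all $k$ from \eqref{3-6} tested against unit vectors, whereas you verify it by a standalone contradiction at a zero coordinate), but the computations and the key identification of $\sum_{j\neq i}(a_i-a_j)/b_j$ with $\Sigma(\gamma)$ coincide, so this is the same proof in a slightly tighter packaging rather than a genuinely different route.
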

\begin{proof} For any $\gamma \in \R_+^d$, any $i\in\{1,\ldots,d\}$ and a non-zero vector
  $v=(v^1,\ldots,v^d)\in\R_+^d$ with $v^i=0$, letting $| v|= \sum _j v_j >0$, the inequality \eqref{2-2} becomes 
\[
|v|\log(1+q\gamma_i)~<~ \max_{1\leq j\leq d} \Bigl( v^j \log(1+\gamma_j) +
(|v| - v^j)\log(1+q\gamma_j)  \Bigr)
\]
or equivalently,  
\be\label{3-6}
|v|\log\frac{1+q\gamma_i}{1+q\gamma_j} ~<~ v^j \log\frac{1+\gamma_j}{1+q\gamma_j} \; \text{ for some } j\in\{1,\ldots,d\}
\ee
Since $q < 1$,  the inequality  \eqref{3-6}  is  trivially satisfied when 
\[\gamma_i < \max_j \gamma_j
\]
Thus, $\gamma\in\Gamma$ if and only if  \eqref{3-6} holds for any $i\in \{1,\ldots,d\}$ such that
\be\label{3-7}
 \gamma_i = \max_j \gamma_j
\ee
and for any non-zero vector $v\in\R_+^d$
with $v^i=0$. 

\medskip
Consider now a vector $\gamma =(\gamma_1,\ldots,\gamma_d)$
with $\gamma_i > 0$ for all $i\in\{1,\ldots,d\}$.  If $\gamma\not\in\Gamma$, then  using the above arguments it follows that for some index $i\in \{1,\ldots,d\}$ satisfying the equality \eqref{3-7} there is a non-zero vector $v\in\R_+^d$
with $v^i=0$ such that 
\[
|v|\log\frac{1+q\gamma_i}{1+q\gamma_k} ~\geq~ v^k \log\frac{1+\gamma_k}{1+q\gamma_k} \; \text{ for all} \;  k\in\{1,\ldots,d\}, 
\]
and consequently, 
\[
\frac{\max_j\log(1+q\gamma_j) - \log(1+q\gamma_k)} { \log(1+\gamma_k) -
\log(1+q\gamma_k)}\, |v| ~\geq~  v^k \quad \text{for all $k\in\{1,\ldots,d\}$}. 
\]
Summing  these inequalities proves that for such a vector $\gamma$,  \eqref{3-5}
fails to hold. 

\medskip
Conversely, suppose that $\gamma\in\Gamma$.  Then  \eqref{3-6} holds for any index $i\in \{1,\ldots,d\}$ satisfying the equality \eqref{3-7} and for any non-zero vector $v\in\R_+^d$
with $v^i=0$. From \eqref{3-6} it follows that $\gamma$ is non-zero. Moreover,   let  $i\in\{1,\ldots,d\}$ satisfy \eqref{3-7}. Then for any $k\in\{1,\ldots,d\}\setminus\{i\}$, 
using the inequality \eqref{3-6} with a unit vector $v = (v^1,\ldots,v^d)$ such that $v^k = 1$ and $v^j = 0$ for $j\not=k$, one gets 
\[
q \max_j \gamma_j  ~=~ q \gamma_i ~<~  \gamma_k,
\]
and consequently, $\gamma _k >0$ for all $k \in \{1, \ldots, d\}$.  The quantity $\Sigma(\gamma_1,\ldots,\gamma_d)$ is therefore well-defined and equal to $|v|$ for  $v=(v^1,\ldots,v^d)\in\R_+^d$ given by 
 \[
v^j ~=~ \frac{\max_{1\leq i\leq d}
  ~\log(1+q\gamma_i) - \log(1+q\gamma_j)}{\log(1+\gamma_j) -
  \log(1+q\gamma_j)}, \quad \text{ \; $j\in\{1,\ldots,d\}$}. 
\]
If $|v| =\Sigma(\gamma_1,\ldots,\gamma_d) =0$, then \eqref{3-5} obviously holds. Otherwise, using again \eqref{3-6} with such a vector $v$ and with any $i$ satisfying \eqref{3-7} gives
\[ \Sigma(\gamma_1,\ldots,\gamma_d) ~\max_{1\leq i\leq d}
  ~\log\frac{1+q\gamma_i}{1+q\gamma_j}    ~< ~\max_{1\leq i\leq d}
  ~\log\frac{1+q\gamma_i}{1+q\gamma_j} \quad \text{ for some } j\in\{1,\ldots,d\}
  \]
which proves   \eqref{3-5}.

\end{proof}

Remark  that for a completely symmetrical routing matrix $P$, 
\[
G_{ii} = \sum_{n=0}^\infty \left(p\sum_{j\not=i} Q_{ji}\right)^n ~=~ \left(1 - \frac{(d-1)p^2}{1-(d-2)p}\right)^{-1}
\]
Hence, when combined with Theorem~\ref{pr1}, the above proposition implies the following statement, similar to   Corollary 2.1.
\begin{cor}\label{cor3-2} Under the hypotheses of Proposition~\ref{prop3-3}, the function 
\[
h_\gamma(x) = \sum_{i=1}^d \exp(\overrightarrow{\gamma_i}\cdot x) 
~=~ \sum_{i=1}^d
(1+\gamma_i)^{ x^i}  (1 + q\gamma_i )^{\sum_{j\not= i}x^j}
\]
satisfies  
\[
\limsup_{|x|\to\infty} \frac{{\cal L}h_{\gamma}(x)}{h_{\gamma}(x)} = -  \left(1 - \frac{(d-1)p^2}{1-(d-2)p}\right) \min_{1\leq i \leq d}
~\gamma _i~\Bigl( \frac{\mu_i}{1 +\gamma_i} -\nu_i \Bigr)<~ 0 
\]
whenever \eqref{3-5} holds and $0 < \gamma_i <\displaystyle{ \frac{\mu_i}{\nu_i}-1}$ for all
$i=1,\ldots,d$. 
\end{cor}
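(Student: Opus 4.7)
The plan is to obtain Corollary~\ref{cor3-2} as a direct consequence of Theorem~\ref{pr1} applied to the completely symmetric setting, where the hypothesis $\gamma\in\Gamma$ is replaced by the explicit condition \eqref{3-5} provided by Proposition~\ref{prop3-3}. The work is therefore reduced to two explicit computations: (i) rewriting $h_\gamma$ in the product form stated in the corollary, and (ii) computing $G_{ii}$ explicitly in the symmetric case so that $1/G_{ii}$ can be pulled out of the minimum in \eqref{2-4}.

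For (i), I would use the defining relation \eqref{2-1}, namely $\gamma_i^j=\log(1+Q_{ji}\gamma_i)$. Since $\tau_i=0$ under $\P_i$, one has $Q_{ii}=1$, giving $\gamma_i^i=\log(1+\gamma_i)$. For $j\neq i$, the complete symmetry of $P$ with $p_{ij}=p$ (and $p_{ii}=0$) yields $Q_{ji}=q=p/(1-(d-2)p)$ as recalled in the text, so $\gamma_i^j=\log(1+q\gamma_i)$. Plugging these values into \eqref{2-3} and taking exponentials converts the sum into the product form
\[
h_\gamma(x)=\sum_{i=1}^d \exp(\overrightarrow{\gamma_i}\cdot x)
=\sum_{i=1}^d (1+\gamma_i)^{x^i}(1+q\gamma_i)^{\sum_{j\ne i}x^j},
\]
as claimed.

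For (ii), I would use the standard first-return identity $G_{ii}=1/(1-r_i)$, where $r_i$ is the probability, starting from $i$, of ever returning to $i$ in the auxiliary Markov chain $(\xi_n)$. One step of the chain gives
\[
r_i=\sum_{j\ne i}p_{ij}Q_{ji}=(d-1)p\cdot q=\frac{(d-1)p^2}{1-(d-2)p},
\]
so $G_{ii}=\bigl(1-(d-1)p^2/(1-(d-2)p)\bigr)^{-1}$, independent of $i$. The right hand side of \eqref{2-4} then factors, and
\[
-\min_{1\le i\le d}\frac{\gamma_i}{G_{ii}}\!\left(\frac{\mu_i}{1+\gamma_i}-\nu_i\right)
=-\left(1-\frac{(d-1)p^2}{1-(d-2)p}\right)\min_{1\le i\le d}\gamma_i\!\left(\frac{\mu_i}{1+\gamma_i}-\nu_i\right).
\]

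It remains to justify applying Theorem~\ref{pr1}, i.e.\ to check $\gamma\in\Gamma$. This is exactly the content of Proposition~\ref{prop3-3}: under the assumption $\gamma_i>0$ for all $i$ and $\Sigma(\gamma_1,\ldots,\gamma_d)<1$, membership in $\Gamma$ holds. Finally, the strict negativity of the limit follows from the elementary observation already recorded after Theorem~\ref{pr1} that $\gamma_i(\mu_i/(1+\gamma_i)-\nu_i)>0$ iff $0<\gamma_i<\mu_i/\nu_i-1$, which is the additional assumption of the corollary. There is no genuine obstacle here; the only point requiring a moment of care is the identification $Q_{ii}=1$ and the computation of $r_i$, both of which are routine given the symmetry of the routing matrix.
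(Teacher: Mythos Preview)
Your proposal is correct and follows essentially the same approach as the paper: the corollary is stated there as an immediate consequence of Theorem~\ref{pr1} combined with Proposition~\ref{prop3-3}, together with the explicit computation of $G_{ii}$ that appears in the paragraph preceding the corollary. Your derivation of $G_{ii}$ via the first-return probability $r_i=(d-1)pq$ is a trivial rephrasing of the paper's geometric-series computation $G_{ii}=\sum_{n\ge 0}\bigl(p\sum_{j\ne i}Q_{ji}\bigr)^n$, and the remaining steps (the product form of $h_\gamma$ from $Q_{ii}=1$, $Q_{ji}=q$, and the sign check via $0<\gamma_i<\mu_i/\nu_i-1$) match the paper's reasoning exactly.
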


Note that  \eqref{3-5} is satisfied for any vector $\gamma \in \R^d$ such that $\gamma_1= \ldots  =\gamma_d >0$, so that the set of vectors $\gamma \in \R^d$ satisfying both   \eqref{3-5} and  $0 < \gamma_i < \displaystyle{ \frac{\mu_i}{\nu_i}-1}$ for all 
$i=1,\ldots,d$ is nonempty. Using therefore Theorem~\ref{theorem_3} and Corollary~\ref{cor2} we obtain 
\begin{cor}\label{cor3-3} Under the hypotheses of Proposition~\ref{prop3-3},  
\begin{multline*}
-\left(1 - \frac{(d-1)p^2}{1-(d-2)p}\right) \min_{1\leq i \leq d} \left(\sqrt{\mu_i}
-\sqrt{\nu_i}\right)^2 ~\leq~ \log r^*_e \\~\leq~ - \left(1 -
\frac{(d-1)p^2}{1-(d-2)p}\right) \sup_\gamma~\min_{1\leq i \leq d}~ \gamma _i~\Bigl( \frac{\mu_i}{1 +\gamma_i} -\nu_i \Bigr) ~<~ 0 
\end{multline*}
where the supremum is taken over all $\gamma \in \Gamma$, or equivalently, over all $\gamma =
(\gamma_1,\ldots,\gamma_d)$ with $\gamma_i > 0$ for all 
$i=1,\ldots,d$ such that  inequality \eqref{3-5} holds. 
\end{cor}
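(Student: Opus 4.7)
The plan is to assemble the corollary directly from Theorem~\ref{theorem_3} and Corollary~\ref{cor2}, using Proposition~\ref{prop3-3} to rewrite the set $\Gamma$ explicitly. The one ingredient that reduces everything to a clean formula is the observation, already recorded before Corollary~\ref{cor3-2}, that under complete symmetry
\[
G_{ii} ~=~ \left(1 - \frac{(d-1)p^2}{1-(d-2)p}\right)^{-1}
\]
is independent of $i$. Consequently the common factor $1/G_{ii}$ can be pulled outside the minimum in both \eqref{2-13} and \eqref{2-14}.

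First I would establish the lower bound. Substituting the above value of $G_{ii}$ into the bound of Theorem~\ref{theorem_3} gives immediately
\[
-\left(1 - \frac{(d-1)p^2}{1-(d-2)p}\right) \min_{1\leq i \leq d} \bigl(\sqrt{\mu_i}-\sqrt{\nu_i}\bigr)^2 ~\leq~ \log r_e^*,
\]
which is the left-hand inequality. For the upper bound, the same substitution in Corollary~\ref{cor2} yields
\[
\log r_e^* ~\leq~ -\left(1 - \frac{(d-1)p^2}{1-(d-2)p}\right) \sup_{\gamma\in\Gamma}\min_{1\leq i \leq d}~\gamma_i\Bigl(\frac{\mu_i}{1+\gamma_i}-\nu_i\Bigr).
\]
By Proposition~\ref{prop3-3}, membership $\gamma\in\Gamma$ is exactly the requirement that every $\gamma_i > 0$ and $\Sigma(\gamma_1,\ldots,\gamma_d)<1$, so the supremum may equivalently be taken over all such $\gamma$.

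The remaining point is strict negativity of the upper bound. For this it suffices to exhibit a single $\gamma\in\Gamma$ on which $\min_i\gamma_i(\mu_i/(1+\gamma_i)-\nu_i)>0$. Take any constant vector $\gamma=(c,\ldots,c)$ with
\[
0 ~<~ c ~<~ \min_{1\leq i \leq d}\left(\frac{\mu_i}{\nu_i}-1\right).
\]
Then $\Sigma(c,\ldots,c)=0<1$, so $\gamma\in\Gamma$ by Proposition~\ref{prop3-3}, and for each $i$ the factor $c(\mu_i/(1+c)-\nu_i)$ is strictly positive since $c<\mu_i/\nu_i-1$. This gives a strictly positive lower bound for the supremum, hence a strictly negative right-hand side.

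There is no real obstacle: once one spots that $G_{ii}$ is independent of $i$ under complete symmetry, the whole statement is just a substitution combined with Proposition~\ref{prop3-3}; the only thing requiring any care is verifying the nonemptiness of the admissible region of $\gamma$'s, handled by the constant-vector choice above.
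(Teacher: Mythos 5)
Your proof is correct and follows essentially the same route as the paper: pull the constant factor $1/G_{ii}$ (independent of $i$ under complete symmetry) out of the minimum in Theorem~\ref{theorem_3} and Corollary~\ref{cor2}, invoke Proposition~\ref{prop3-3} to rewrite $\Gamma$ via \eqref{3-5}, and obtain strict negativity from a constant vector $\gamma=(c,\ldots,c)$ with $\Sigma=0<1$ and $0<c<\min_i(\mu_i/\nu_i-1)$, which is exactly the observation the paper records between Corollaries~\ref{cor3-2} and~\ref{cor3-3}.
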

Thus,  if the conditions of Proposition~\ref{prop3-3} are satisfied and 
\be\label{3-8}
\sup_{\gamma\in\Gamma}~\min_{1\leq i \leq d}~ \gamma _i~\Bigl( \frac{\mu_i}{1 +\gamma_i} -\nu_i \Bigr) ~=~
\min_{1\leq i \leq d}  (\sqrt{\mu_i}-\sqrt{\nu_i})^2,
\ee
then  
\be\label{3-9}
\log r^*_e  = - \left(1 - \frac{(d-1)p^2}{1-(d-2)p}\right) \min_{1\leq i \leq d} \left(\sqrt{\mu_i}
-\sqrt{\nu_i}\right)^2,
\ee
that is, relation \eqref{3-1} again holds. The following statement gives some simple sufficient conditions for the equalities \eqref{3-8} and \eqref{3-9} 

\begin{cor}\label{cor3-4} Suppose that  for some $i_0 \in \{1, \ldots, d \}$,
\be\label{3-10}
\min_{1\leq i\leq d} \left(\sqrt{\mu_i}-\sqrt{\nu_i}\right) ~=~ \sqrt{\mu_{i_0}}-\sqrt{\nu_{i_0}}.
\ee
and
\be\label{3-11}
\min_{1\leq i\leq d}~ \left( \frac{\mu_i}{\sqrt{\mu_{i_0}}} - \frac{\nu_i}{\sqrt{\nu_{i_0}}}\right)  ~=~ \sqrt{\mu_{i_0}} -
\sqrt{\nu_{i_0}}. 
\ee
Then under the hypotheses of Proposition~\ref{prop3-3},  \eqref{3-9} holds. 

 \noindent  In particular, \eqref{3-9} holds  if one of the following 
conditions  is satisfied :
\begin{enumerate}
\item[(i)] $\mu_i/\nu_i = \mu_j/\nu_j$ \; for all \; $i,j\in\{1,\ldots,d\}$, 
\item[(ii)]  there is $i_0\in\{1,\ldots,d\}$ such that $\mu_i \geq \mu_{i_0}$ and
  $\nu_i\leq\nu_{i_0}$ for all $i\in\{1,\ldots,d\}$. 
\end{enumerate} 

\end{cor}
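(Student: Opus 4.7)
The plan is to use Corollary~\ref{cor3-3}, which already gives the lower bound in \eqref{3-9}; it remains to produce a matching upper bound by exhibiting an explicit vector $\gamma\in\Gamma$ that realises the equality \eqref{3-8}. The natural candidate is the \emph{constant} vector
\[
\gamma^* = (\gamma^*_0,\ldots,\gamma^*_0), \qquad \gamma^*_0 = \sqrt{\mu_{i_0}/\nu_{i_0}} - 1,
\]
which is strictly positive by assumption~(B). That $\gamma^*\in\Gamma$ is immediate from Proposition~\ref{prop3-3}: since all components are equal, the numerators $\max_i\log(1+q\gamma^*_i) - \log(1+q\gamma^*_j)$ all vanish, so $\Sigma(\gamma^*) = 0 < 1$.

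The main computation is to show that with this choice
\[
\min_{1\le i\le d} \gamma^*_0\Bigl(\frac{\mu_i}{1+\gamma^*_0} - \nu_i\Bigr) = (\sqrt{\mu_{i_0}} - \sqrt{\nu_{i_0}})^2.
\]
For $i = i_0$, the quantity $\gamma^*_0(\mu_{i_0}/(1+\gamma^*_0) - \nu_{i_0})$ equals $(\sqrt{\mu_{i_0}} - \sqrt{\nu_{i_0}})^2$ since $\gamma^*_0$ is precisely the maximiser of $\gamma\mapsto \gamma(\mu_{i_0}/(1+\gamma) - \nu_{i_0})$. For $i\ne i_0$, using $1+\gamma^*_0 = \sqrt{\mu_{i_0}/\nu_{i_0}}$ one rewrites
\[
\gamma^*_0\Bigl(\frac{\mu_i}{1+\gamma^*_0} - \nu_i\Bigr) = \frac{\sqrt{\mu_{i_0}}-\sqrt{\nu_{i_0}}}{\sqrt{\mu_{i_0}}}\Bigl(\frac{\mu_i}{\sqrt{\mu_{i_0}}}\sqrt{\mu_{i_0}} - \nu_i\frac{\sqrt{\nu_{i_0}}}{\sqrt{\nu_{i_0}}}\Bigr),
\]
which after simplification equals $(\sqrt{\mu_{i_0}} - \sqrt{\nu_{i_0}})\bigl(\mu_i/\sqrt{\mu_{i_0}} - \nu_i/\sqrt{\nu_{i_0}}\bigr)$. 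Hypothesis \eqref{3-11} then yields the lower bound $(\sqrt{\mu_{i_0}} - \sqrt{\nu_{i_0}})^2$, and hypothesis \eqref{3-10} identifies this common value with $\min_i(\sqrt{\mu_i} - \sqrt{\nu_i})^2$. This establishes \eqref{3-8}, and combining with Corollary~\ref{cor3-3} yields \eqref{3-9}.

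For the sufficient conditions, I would verify \eqref{3-10}--\eqref{3-11} directly. Under (ii) the choice of $i_0$ is prescribed and both conditions follow at once from the monotonicity $\mu_i\ge \mu_{i_0}$, $\nu_i\le \nu_{i_0}$. Under (i), writing the common ratio as $r^2 = \mu_i/\nu_i$ (with $r > 1$ by ergodicity), one has $\sqrt{\mu_i} - \sqrt{\nu_i} = (r-1)\sqrt{\nu_i}$, so picking $i_0 \in \arg\min_i \nu_i$ gives \eqref{3-10}, and a direct substitution shows \eqref{3-11} reduces to $\nu_i \ge \nu_{i_0}$, which holds by the choice of $i_0$.

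I do not expect a serious obstacle: the only thing to watch is that the algebraic manipulation of $\mu_i/(1+\gamma^*_0) - \nu_i$ matches the form of \eqref{3-11} exactly, so that the hypothesis can be applied without a constant mismatch. Once this is correctly set up the proof is essentially one display.
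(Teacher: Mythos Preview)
Your approach is correct and is exactly the one the paper uses: restrict to constant vectors $\gamma=(t,\ldots,t)\in\Gamma$, take $t=\gamma_{i_0}^*=\sqrt{\mu_{i_0}/\nu_{i_0}}-1$, and verify via \eqref{3-10}--\eqref{3-11} that this attains $\min_i(\sqrt{\mu_i}-\sqrt{\nu_i})^2$; the treatment of cases (i) and (ii) also matches. One cosmetic point: your intermediate display is garbled (as written it collapses to $\tfrac{\sqrt{\mu_{i_0}}-\sqrt{\nu_{i_0}}}{\sqrt{\mu_{i_0}}}(\mu_i-\nu_i)$, which is wrong), but the final expression $(\sqrt{\mu_{i_0}}-\sqrt{\nu_{i_0}})\bigl(\mu_i/\sqrt{\mu_{i_0}}-\nu_i/\sqrt{\nu_{i_0}}\bigr)$ is correct---just factor $\sqrt{\nu_{i_0}}$ rather than $\sqrt{\mu_{i_0}}$ out of the bracket.
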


\begin{proof} Here, as noted above, any vector  $\gamma=(\gamma_1,\ldots,\gamma_d)$ with $\gamma_1 = \ldots = \gamma_d >
0$ belongs to the set $\Gamma$. Hence, by Corollary~\ref{cor3-3}, the equality \eqref{3-9} holds if  
\be\label{3-12}
\sup_{t > 0}~\min_{1\leq i \leq d}~ t~\Bigl( \frac{\mu_i}{1 +t} -\nu_i \Bigr)~=~
\min_{1\leq i \leq d}  \left(\sqrt{\mu_i}-\sqrt{\nu_i}\right)^2. 
\ee
Recall that the maximum of the function $t\in \R_+\to \displaystyle{t~\Bigl( \frac{\mu_i}{1 +t} -\nu_i \Bigr)}$ is
achieved at the point $\gamma^*_i=\sqrt{\mu_i/\nu_i}-1$ and equals
$\left(\sqrt{\mu_i}-\sqrt{\nu_i}\right)^2$. Hence,  assuming   \eqref{3-10}, then \eqref{3-12} holds if and only if 
\[
\gamma ^*_{i_0}~\Bigl( \frac{\mu_i}{1 +\gamma^*_{i_0}} -\nu_i \Bigr) ~\geq~
\left(\sqrt{\mu_{i_0}}-\sqrt{\nu_{i_0}}\right)^2, \quad \forall i\in\{1,\ldots,d\}.
\]
Since 
\[
\gamma ^*_{i_0}~\Bigl( \frac{\mu_i}{1 +\gamma^*_{i_0}} -\nu_i \Bigr)  ~=~ \left(\frac{\mu_i}{\sqrt{\mu_{i_0}}} -
\frac{\nu_i}{\sqrt{\nu_{i_0}}}
\right) \left(\sqrt{\mu_{i_0}}-\sqrt{\nu_{i_0}}\right),
\]
the last inequalities are equivalent to \eqref{3-11}. 
 
 Now if condition  (i) is satisfied, consider $i_0$ such that $\min_{1\leq i\leq d} \nu_i ~=~ \nu_{i_0} $, then  \eqref{3-10}  is satisfied. Using $\mu_i= \mu_{i_0} \nu_i/\nu_{i_0}$, we get 
\[
\min_{1\leq i\leq d}~ \left( \frac{\mu_i}{\sqrt{\mu_{i_0}}} - \frac{\nu_i}{\sqrt{\nu_{i_0}}}\right) ~=~ \min_{1\leq i\leq d}~  \frac{\nu _i}{\nu_{i_0}}( \sqrt{\mu_{i_0}} -
\sqrt{\nu_{i_0}}) ~=~\sqrt{\mu_{i_0}} -
\sqrt{\nu_{i_0}}. 
\]
 so that  \eqref{3-11} holds, hence also  \eqref{3-9}   from  the first part of the proof.
 
Finally, if  condition (ii) is satisfied, then  $i_0$  clearly satisfies   \eqref{3-10}  and \eqref{3-11},  so that  
 \eqref{3-9} again  follows from the first part of the corollary. 
 \end{proof}

\medskip

Remark that (ii)  is in particular satisfied if $\mu_i = \mu_j$ for all  $i,j\in\{1,\ldots,d\}$, 
 or  if $\nu_i = \nu_j$  for all $i,j\in\{1,\ldots,d\}$.

\bigskip

Our following result is a necessary and sufficient condition for the equality \eqref{3-8}. 
Denote 
\[
m ~=~ \min_{0 \le i \le d} (\sqrt{\mu_i}-\sqrt{\nu_i})^2 
\]
and consider  for $i\in\{1,\ldots,d\}$, 
\[
\Delta_i = \left\{ t\in\R_+ : ~\displaystyle{t~\Bigl( \frac{\mu_i}{1 +t} -\nu_i \Bigr)}\geq  m\right\}.
\]
A straightforward calculation shows that $\Delta_i = [a_i,b_i]$ with 
\[
a_i ~=~ \frac{\mu_i - \nu_i - m - \sqrt{(\mu_i + \nu_i -
     m)^2 - 4 \nu_i\mu_i} }{2\nu_i}
\]
and
\[
b_i ~=~ \frac{\mu_i - \nu_i - m+ \sqrt{(\mu_i + \nu_i -
      m)^2 - 4 \nu_i\mu_i} }{2\nu_i}.
\]
Moreover, 
\[
\sqrt{\mu_i/\nu_i} -1\in \Delta_i\subset\{t\in\R_+ : ~ \displaystyle{t~\Bigl( \frac{\mu_i}{1 +t} -\nu_i \Bigr)}> 0\} = \left]0,\displaystyle{\frac{\mu_i}{\nu_i}-1}\right[,
\] 
and consequently, 
\[
b_i ~\geq~ \sqrt{\mu_i/\nu_i}-1 ~\geq~ a_i > 0,
\]
where $b_i=a_i = \sqrt{{\mu_i}/{\nu_i}}-1$ if and only if $(\sqrt{\mu_i}-\sqrt{\nu_i} )^2=
m$. 
We put 
\[
\widehat{a} ~=~ \max_{1\leq i \leq d} a_i \quad \text{ 
and } \quad \widehat\gamma_i = \min\{b_i, \widehat{a}\} 
\]
for $i=1,\ldots,d$. 
\medskip

\begin{prop}\label{prop3-4} Suppose that the conditions of Proposition~\ref{prop3-3} are
  satisfied. Then \eqref{3-8} holds if and only if
  $\Sigma(\widehat\gamma_1,\ldots,\widehat\gamma_d) \leq 1$.
\end{prop}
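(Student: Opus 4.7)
Plan: The argument has two pieces --- a reformulation of \eqref{3-8} as a feasibility condition, and the identification of $\widehat\gamma$ as the minimizer of $\Sigma$ over $\prod_i\Delta_i$.

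First, the function $\gamma_i\mapsto\gamma_i(\mu_i/(1+\gamma_i)-\nu_i)$ has maximum $(\sqrt{\mu_i}-\sqrt{\nu_i})^2$, attained at $\gamma_i^*=\sqrt{\mu_i/\nu_i}-1$, so $\min_i\gamma_i(\mu_i/(1+\gamma_i)-\nu_i)\leq m$ for every $\gamma\in\R_+^d$, with equality only when $\gamma_i\in\Delta_i=[a_i,b_i]$ for all $i$. By continuity, the supremum in \eqref{3-8} coincides with the supremum over the closure $\overline\Gamma$. If this supremum equals $m$, a sup-approaching sequence $\gamma^{(n)}\in\Gamma$ has each component eventually confined to the bounded sub-level set $\{t\geq 0:t(\mu_i/(1+t)-\nu_i)\geq m/2\}$, and extracting a limit point yields some $\gamma^{\infty}\in\overline\Gamma\cap\prod_i\Delta_i$; conversely any such $\gamma^\infty$ realizes the value $m$. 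Since $\Delta_i\subset(0,\mu_i/\nu_i-1)$, the intersection lies in $(0,\infty)^d$, on which Proposition~\ref{prop3-3} identifies $\overline\Gamma$ with $\{\gamma:\Sigma(\gamma)\leq 1\}$. Thus \eqref{3-8} is equivalent to $\min_{\gamma\in\prod_i\Delta_i}\Sigma(\gamma)\leq 1$.

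The crux is then to show that $\widehat\gamma$ attains this minimum. Writing $F(t)=\log(1+qt)$ and $G(t)=\log(1+t)$, we have
$$\Sigma(\gamma)=\sum_{j=1}^d\frac{F(M)-F(\gamma_j)}{G(\gamma_j)-F(\gamma_j)},\qquad M=\max_i\gamma_i.$$
Two monotonicity facts drive the argument: (i) for fixed $M$, the map $t\mapsto(F(M)-F(t))/(G(t)-F(t))$ is non-increasing on $(0,M]$, since $F$ is increasing and $G-F$ has positive derivative $(1-q)/((1+t)(1+qt))$; (ii) with $\gamma_j<M$ held fixed, each term is increasing in $M$. Given any $\gamma\in\prod_i\Delta_i$, we have $M\geq\max_i a_i=\widehat a$. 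I perform a two-stage descent to $\widehat\gamma$. Stage~1: replace each $\gamma_j$ by $\gamma'_j=\min\{M,b_j\}$. The inequalities $a_j\leq\min\{\widehat a,b_j\}\leq\min\{M,b_j\}\leq b_j$ keep us in $\prod_i\Delta_i$, $\max_j\gamma'_j$ remains $M$ (the argmax $j^*$ has $b_{j^*}\geq\gamma_{j^*}=M$), and by (i) every summand weakly decreases. Stage~2: let $M$ decrease continuously from its initial value to $\widehat a$ while maintaining $\gamma_j(M)=\min\{M,b_j\}$; then
$$\Sigma(M)=\sum_{j:\,b_j<M}\frac{F(M)-F(b_j)}{G(b_j)-F(b_j)}$$
is non-decreasing in $M$ by (ii) (and since the index set enlarges as $M$ crosses each $b_j$). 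At $M=\widehat a$, $\gamma_j(M)=\min\{\widehat a,b_j\}=\widehat\gamma_j$, yielding $\Sigma(\widehat\gamma)\leq\Sigma(\gamma)$.

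Combining these ingredients gives the proposition at once: $\widehat\gamma$ belongs to $\prod_i\Delta_i$ (since $a_i\leq\min\{\widehat a,b_i\}\leq b_i$), and $\overline\Gamma\cap\prod_i\Delta_i\neq\emptyset$ iff $\min_{\prod_i\Delta_i}\Sigma\leq 1$ iff $\Sigma(\widehat\gamma)\leq 1$. The main obstacle is the bookkeeping in stage~2 of the descent: one must simultaneously verify feasibility and monotone decrease of $\Sigma$, with a case split depending on whether $b_j\geq\widehat a$ (so $\widehat\gamma_j=\widehat a$) or $b_j<\widehat a$ (so $\widehat\gamma_j=b_j$); once these cases are sorted, the monotonicity facts (i) and (ii) do the rest.
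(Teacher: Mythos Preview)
Your argument is correct and runs parallel to the paper's, but organizes the work differently and has one underjustified step.

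\textbf{Comparison.} Both proofs reduce \eqref{3-8} to the existence of some $\gamma\in\overline\Gamma\cap\prod_i\Delta_i$ and then show that $\widehat\gamma$ minimizes $\Sigma$ on $\prod_i\Delta_i$. For the minimization, the paper argues by contradiction: it picks a minimizer $\gamma^*$, splits according to whether $\Sigma(\gamma^*)=0$ or $>0$, and in the latter case shows that $\max_i\gamma_i^*$ must equal $\widehat a$ (else one could decrease $\Sigma$ by lowering the maximal coordinates), then observes that on the resulting box the function is separately decreasing. Your two-stage descent is a cleaner, constructive substitute: Stage~1 pushes each coordinate up to $\min\{M,b_j\}$ using (i), Stage~2 slides $M$ down to $\widehat a$ using (ii), and the feasibility checks $a_j\le\min\{M,b_j\}\le b_j$ and $\max_j\gamma_j(M)=M$ are exactly the bookkeeping you outline. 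This avoids the case split entirely.

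\textbf{The gap.} Your sentence ``Proposition~\ref{prop3-3} identifies $\overline\Gamma$ with $\{\gamma:\Sigma(\gamma)\le 1\}$'' overstates what that proposition gives: it only says $\Gamma=\{\gamma\in(0,\infty)^d:\Sigma(\gamma)<1\}$, so continuity yields $\overline\Gamma\cap(0,\infty)^d\subset\{\Sigma\le 1\}$, but the reverse inclusion---needed for the implication $\Sigma(\widehat\gamma)=1\Rightarrow\widehat\gamma\in\overline\Gamma$---requires showing that any point with $\Sigma=1$ can be approximated by points with $\Sigma<1$. The paper handles this explicitly by perturbing the maximal coordinates of $\widehat\gamma$ downward. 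Your own fact~(ii) gives exactly this perturbation (if $\Sigma(\gamma)=1>0$ then $\min_j\gamma_j<\max_j\gamma_j$, and lowering the coordinates equal to $M$ by $\varepsilon$ strictly decreases $\Sigma$ while keeping all coordinates positive), but you should say so rather than treat the identification as immediate.
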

\begin{proof} Indeed, suppose first that \eqref{3-8} holds and remark that  for any $i\in\{1,\ldots,d\}$, since  $\nu_i >0$, the function $
  \displaystyle{t~\Bigl( \frac{\mu_i}{1 +t} -\nu_i \Bigr)} \to - \infty$ as $t\to +\infty$. These functions being continuous on $\R_+$,  it follows that the function 
\[
\min_{1\leq i \leq d}    \gamma _i~\Bigl( \frac{\mu_i}{1 +\gamma_i} -\nu_i \Bigr)  
\]
attains its maximum over the closure $\overline{\Gamma}$ of the set $\Gamma$ at some point
$\widetilde\gamma\in\overline\Gamma$. Moreover, relation \eqref{3-8}   proves that 
\[
\min_{1\leq i \leq d}   \widetilde  \gamma _i~\Bigl( \frac{\mu_i}{1 +\widetilde\gamma_i} -\nu_i \Bigr)  
 ~=~
 \min_{0 \le i \le d} (\sqrt{\mu_i}-\sqrt{\nu_i})^2 \stackrel{def}{=} m,
\]
from which it follows  that $\widetilde\gamma_i\in\Delta_i$  and
consequently, $\widetilde\gamma_i > 0$ for all $i\in\{1,\ldots,d\}$. The quantity
$\Sigma(\widetilde\gamma_1,\ldots,\widetilde\gamma_d)$ is therefore well defined and by
Proposition~\ref{prop3-3}, 
\[
\Sigma(\widetilde\gamma_1,\ldots,\widetilde\gamma_d) \leq 1.
\]
To prove that $\Sigma(\widehat\gamma_1,\ldots,\widehat\gamma_d)\leq 1$ it is now sufficient to
show that $(\widehat\gamma_1,\ldots,\widehat\gamma_d)$ achieves the minimum of
the function $\Sigma(\gamma_1,\ldots,\gamma_d)$ over $(\gamma_1,\ldots,\gamma_d)\in
\Delta_1\times\cdots\times\Delta_d$. For this let us notice that this function is 
  continuous on the compact set $\Delta_1\times\cdots\times\Delta_d$ and hence attains 
  its minimum on this set at some point $\gamma^*=(\gamma_1^*,\ldots,\gamma_d^*)$. 
  
  If $\Sigma(\gamma_1^*,\ldots,\gamma_d^*) = 0$, then from the definition of the function $\Sigma$ it follows that  $\gamma^*_j = \max_j \gamma_i^*$ for all $j=1,\ldots, d$,  that is, the intervals $\Delta _i$, $i=1, \cdots d$, have some common point $t = \gamma_1^* = \cdots = \gamma_d^*$. But in this case, 
  \[
  \min_i b_i ~\geq~  t ~\geq~  \max_{1\leq i\leq d}  a_i ~\stackrel{def}=~ \widehat{a}
  \]
  from which, using the definition of the vector $\widehat\gamma$, it follows that $\widehat \gamma _1 = \ldots = \widehat \gamma _d = \widehat a$ and consequently, also $\Sigma(\widehat \gamma_1,\ldots, \widehat \gamma_d) = 0$. 
  
 Suppose now that $\Sigma(\gamma_1^*,\ldots,\gamma_d^*) > 0$ and let us show that in this case, $\gamma^* = \widehat\gamma$. Indeed, in this case, from the definition of the function $\Sigma(\gamma_1,\ldots,\gamma_d)$ it follows that  $\gamma^*_j < \max_j \gamma_i^*$ for some $j=1,\ldots, d$. Moreover, 
  \be\label{e3-?}
\max_{1\leq i\leq d} \gamma^*_i = \max_{1\leq i\leq d}  a_i ~\stackrel{def}=~\widehat{a}. 
\ee
because otherwise, one could find some $\epsilon>0$ for which the vector $\gamma '=(\gamma '_1,\ldots,\gamma '_d)$ given by
  \[
\gamma '_j ~=~ \begin{cases} 
\gamma^*_j - \epsilon &\text{ if $\gamma^*_j = \max_{1\leq i\leq d}\gamma^*_i$,}  \\
\gamma^*_j &\text{ if $\gamma^*_j < \max_{1\leq i\leq d}\gamma^*_i$}   \\
\end{cases}
\]
belongs to the set  $\Delta_1\times\cdots\times\Delta_d$  and satisfies $\Sigma(\gamma '_1,\ldots,\gamma '_d) <
\Sigma(\gamma_1,\ldots,\gamma_d)$.  Remark now that   the following two assertions are equivalent : 
\begin{itemize}
\item[(i)] $(\gamma_1,\ldots,\gamma_d)\in \Delta_1\times \cdots \times \Delta_d$ and $\max_j \gamma_j = \widehat{a}$
\item[(ii)] $a_j \leq \gamma_j \leq \min\{b_i, \widehat a\} ~\stackrel{def}=~ \widehat{\gamma_j}$ for all $j\in\{1,\ldots,d\}$.
\end{itemize} 
Moreover, for any point $\gamma = (\gamma_1,\ldots,\gamma_d)$ satisfying the inequalities (ii), 
\[
\Sigma(\gamma_1,\ldots,\gamma_d) = \sum_{j=1}^d \frac{\log(1+q\hat{a}) -
  \log(1+q\gamma_j)}{\log(1+ \gamma_j) - \log(1+q\gamma_j)}.
  \]
 When combined with \eqref{e3-?}, these remarks show  that the point $\gamma^* = (\gamma^*_1,\ldots,\gamma^*_d)$ achieves  the minimum of the function 
\[
\sum_{j =1}^d \frac{\log(1+q\hat{a}) -
  \log(1+q\gamma_j)}{\log(1+ \gamma_j) - \log(1+q\gamma_j)}
\] 
over the set  $[a_1,\widehat{\gamma_1}] \times \cdots \times [a_d,\widehat{\gamma_d}]$.    The function 
  \[
 t ~\to~  \frac{\log(1+q\hat{a}) -
  \log(1+qt)}{\log(1+ t) - \log(1+qt)}
  \]
  being decreasing on $]0, \widehat{a}]$, from this it follows that $
 \gamma^*_j = \widehat\gamma_j$ for all  $j\in\{1,\ldots,d\}$ and consequently, 
\[
\Sigma(\widehat\gamma_1,\ldots,\widehat\gamma_d) ~=~ \Sigma(\gamma_1^*,\ldots,\gamma_d^*) ~\leq~
\Sigma(\widetilde\gamma_1,\ldots,\widetilde\gamma_d) ~\leq~ 1. 
\]

\bigskip 
Conversely, suppose that $\Sigma(\widehat\gamma_1,\ldots,\widehat\gamma_d)\leq 1$ and let us prove
the equality \eqref{3-8}. We know from Section 2.2 that 
\begin{align*}
\min_{1\leq i \leq d}  (\sqrt{\mu_i}-\sqrt{\nu_i})^2 &~=~ \sup_{\gamma\in\R_+^d}~\min_{1\leq i \leq d}~   \gamma _i~\Bigl( \frac{\mu_i}{1 +\gamma_i} -\nu_i \Bigr)   \\ &~\geq~ \sup_{\gamma\in\overline\Gamma}~\min_{1\leq i \leq d}~   \gamma _i~\Bigl( \frac{\mu_i}{1 +\gamma_i} -\nu_i \Bigr)  . 
\end{align*}
Moreover,  since $\widehat{\gamma}_i \in \Delta_i$ for all $i \in \{1, \ldots, d\}$,  
\[
\min_{1\leq i \leq d}~   \widehat \gamma _i~\Bigl( \frac{\mu_i}{1 +\widehat \gamma_i} -\nu_i \Bigr)    ~=~ 
\min_{1\leq i \leq d}  (\sqrt{\mu_i}-\sqrt{\nu_i})^2
\]
To get \eqref{3-8} it is therefore sufficient to show that  $\widehat\gamma\in\overline\Gamma$. 
If $\Sigma(\widehat\gamma_1,\ldots,\widehat\gamma_d) < 1$, then $\widehat\gamma\in\Gamma$  by
Proposition~\ref{prop3-3}. 
Suppose now that $\Sigma(\widehat\gamma_1,\ldots,\widehat\gamma_d) = 1$. Then
clearly
\[
\min_{1\leq i\leq d} \widehat\gamma_i < \max_{1\leq i\leq d} \widehat\gamma_i,
\]
and letting 
\[
\gamma_i{(\eps)} = \begin{cases} \widehat\gamma_i - \eps &\text{ if $\widehat\gamma_i =
    \max_{1\leq i\leq d} \widehat\gamma_i$,}\\
\widehat\gamma_i &\text{ otherwise,}
\end{cases} 
\]
one gets $\gamma_i{(\eps) }> 0$ for all $i\in\{1,\ldots,d\}$ and
  $\Sigma(\gamma_1{(\eps)},\ldots,\gamma_d{(\eps)}) < 1$ for all $\eps > 0$ small
enough. By Proposition~\ref{prop3-3},  it follows that
$\left(\gamma_1{(\eps)},\ldots,\gamma_d{(\eps)}\right)\in\Gamma$ for all $\eps > 0$ small enough
and consequently, letting $\eps\to 0$ we conclude that $\widehat\gamma\in\overline\Gamma$. 
\end{proof}

\bigskip
 
The last result of this section provides an example where \eqref{3-8} fails to
hold. This example shows that unfortunately,  in general, the left hand side of
\eqref{2-13} and the right hand side of \eqref{2-14} are not necessarily equal. 

\begin{prop}\label{prop3-5} Suppose that the following conditions are satisfied : 
\begin{itemize}
\item[(i)] $d >3$; 
\item[(ii)]  $\lambda_1+\ldots +\lambda_d = 1$ and $0 = \lambda_1 < \lambda_i$ for all
  $i\in\{2,\ldots,d\}$;
\item[(iii)] $\sqrt{\mu_i}-\sqrt{\nu_i} = t > 0$ for all $i\in\{1,\ldots,d\}$.
\end{itemize}
Then under the hypotheses of
  Proposition~\ref{prop3-3}, for any $p>0$ small enough, there is $t_p > 0$ such that for $t
  > t_p$, the inequality \eqref{3-8} fails to hold.
\end{prop}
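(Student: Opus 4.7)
The plan is to apply Proposition~\ref{prop3-4}, which says \eqref{3-8} fails if and only if $\Sigma(\widehat\gamma_1,\ldots,\widehat\gamma_d) > 1$, so I will exhibit parameters $(p,t)$ satisfying assumptions~(i)--(iii) for which this strict inequality holds.

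First I identify the $\widehat\gamma_i$. Assumption~(iii) makes $(\sqrt{\mu_i}-\sqrt{\nu_i})^2 = t^2$ for every $i$, so $m = t^2$ and the discriminant in the formulas for $a_i,b_i$ vanishes, giving $a_i = b_i = \sqrt{\mu_i/\nu_i}-1 = t/\sqrt{\nu_i}$. Hence each $\Delta_i = \{t/\sqrt{\nu_i}\}$ is a singleton and $\widehat\gamma_i = t/\sqrt{\nu_i}$ for every $i$. The traffic equations with $p_{ij}=p$ for $i\neq j$ and $\lambda_1=0$ then give $\|\nu\|_1 = 1/(1-(d-1)p)$, $\nu_1 = p/((1+p)(1-(d-1)p))$ and $\nu_j = (\lambda_j + p\|\nu\|_1)/(1+p)$ for $j\geq 2$. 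In particular $\nu_1 = O(p)$ while $\nu_j = \lambda_j + O(p)$ stays bounded away from zero, so for $p$ small, $\nu_1 = \min_i \nu_i$ and $\widehat a = \widehat\gamma_1 = t/\sqrt{\nu_1}$.

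The core computation is $\lim_{t\to\infty}\Sigma(\widehat\gamma_1,\ldots,\widehat\gamma_d)$ for $p$ fixed small. The $j=1$ summand vanishes. For $j\geq 2$, both $\widehat\gamma_j = t/\sqrt{\nu_j}$ and $q\widehat\gamma_j$ tend to infinity with $t$, so from $\log(1+x) = \log x + o(1)$ as $x\to\infty$ one deduces
\[
\log\frac{1+q\widehat\gamma_1}{1+q\widehat\gamma_j} \longrightarrow \log\frac{\widehat\gamma_1}{\widehat\gamma_j} = \tfrac{1}{2}\log\frac{\nu_j}{\nu_1}, \qquad \log\frac{1+\widehat\gamma_j}{1+q\widehat\gamma_j} \longrightarrow \log\frac{1}{q},
\]
whence $\lim_{t\to\infty}\Sigma(\widehat\gamma_1,\ldots,\widehat\gamma_d) = L(p) := \frac{1}{2\log(1/q)}\sum_{j=2}^d \log(\nu_j/\nu_1)$.

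Finally, letting $p\to 0$: the expansions $\log(1/q) = -\log p + O(p)$, $\log\nu_1 = \log p + O(p)$, and $\log\nu_j = \log\lambda_j + O(p)$ for $j\geq 2$ show that $L(p) \to (d-1)/2$. Since $d > 3$, $(d-1)/2 > 1$, so there exists $p_0 > 0$ such that $L(p) > 1$ for every $p\in(0,p_0)$; for any such $p$ one can then choose $t_p$ large enough that $\Sigma(\widehat\gamma_1,\ldots,\widehat\gamma_d) > 1$ for all $t>t_p$, and Proposition~\ref{prop3-4} concludes that \eqref{3-8} fails. The only delicate point is the iterated nature of the limits, that is, one must fix $p$ small and only afterwards send $t\to\infty$, which is precisely why the threshold $t_p$ is allowed to depend on $p$; the asymptotic expansions themselves are routine and I foresee no serious obstacle in making them rigorous.
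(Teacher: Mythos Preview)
Your proof is correct and follows essentially the same route as the paper: identify $\widehat\gamma_i=t/\sqrt{\nu_i}$ from the degenerate intervals $\Delta_i$, compute $\lim_{t\to\infty}\Sigma(\widehat\gamma)=\frac{1}{2\log(1/q)}\sum_{j\ge2}\log(\nu_j/\nu_1)$, then let $p\to0$ to obtain $(d-1)/2>1$ and invoke Proposition~\ref{prop3-4}. One small remark: the inequality $\nu_1<\nu_j$ (hence $\widehat a=\widehat\gamma_1$) actually holds for every admissible $p$, not just small $p$, since $\nu_j-\nu_1=\lambda_j/(1+p)>0$.
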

\begin{proof} By Proposition~\ref{prop3-4}, it is sufficient to show
that  
\be\label{3-13}
\lim_{p\to 0} ~\lim_{t\to\infty} ~\Sigma(\widehat\gamma_1,\ldots,\widehat\gamma_d) ~>~ 1. 
\ee
Remark that under the hypotheses of Proposition~\ref{prop3-5}, $a_i = b_i = 
 \sqrt{\mu_i/\nu_i}-1$ and consequently, 
\[
\widehat\gamma_i ~=~ \sqrt{\mu_i/\nu_i}-1 ~=~  \frac{t}{\sqrt{\nu_i}} ,
\]
for all $i=1,\ldots,d$. 
Moreover, a straightforward calculation shows that for any $i=1,\ldots,d$, 
\[
\nu_i ~=~ \frac{1}{p+1} \left( \lambda_i + \frac{p}{1+ p - d p} \sum_{i=1}^d\lambda_i
\right) ~=~ \frac{1}{p+1} \left( \lambda_i + \frac{p}{1+ p - d p} \right).
\]
Since under the hypotheses of our proposition, $\lambda_1 = 0 < \lambda_i$ for all $i\in\{2,\ldots,d\}$, the above relations show that  $\max_i \widehat\gamma_i ~=~ \widehat\gamma_1$.  Using the definition of $\Sigma(\gamma_1,\ldots,\gamma_d)$ we conclude therefore that 
\begin{align*}
\lim_{t\to\infty} ~\Sigma(\widehat\gamma_1,\ldots,\widehat\gamma_d)  = \lim_{t\to\infty}
\sum_{i=2}^d \frac{\log\left((1 + q t/\sqrt{\nu_1})/(1 + q
  t/\sqrt{\nu_i})\right)}{\log\left((1 + 
  t/\sqrt{\nu_i})/(1 + q t/\sqrt{\nu_i})\right)} ~=~ \sum_{i=2}^d
\frac{\log\left(\nu_i/\nu_1\right)}{2 \log\left(1/q \right)} 
\end{align*}
where $q ~=~ p/(1 + 2 p - d p)$ and 
\[
\nu_i/\nu_1 ~=~ 1 + (1 + p - d p) \lambda_i/ p, \quad \forall i=2,\ldots,d. 
\]
Hence, 
\[\lim_{t\to\infty} ~\Sigma(\widehat\gamma_1,\ldots,\widehat\gamma_d)  = \frac {\log \prod_{i=2}^d \left( \frac{\lambda_i}{p} +1-(d-1) \lambda _i \right)}{2 \log \left(\frac{1}{p} +2-d\right)}
\]  
and since $\lambda _i >0$ for $i=2, \cdots ,d$, 
\[
\lim_{p\to 0} ~\lim_{t\to\infty} ~\Sigma(\widehat\gamma_1,\ldots,\widehat\gamma_d) ~=~ \frac{d-1}{2}
\]
Under the hypothesis (i), the last relation proves  \eqref{3-13}. 
\end{proof}

\subsection{Jackson network with three nodes on a circle} Consider a Jackson network with
three nodes ($d=3$) and a routing matrix 
\be\label{3-15} 
P = \left(\begin{matrix}
0& p& q\\
q& 0& p\\
p& q& 0
\end{matrix}\right) \; \text{ with $0 < p < q < 1$ such that
$p+q < 1$.}
\ee
\begin{figure}[ht]
\resizebox{7.5cm}{5cm}{\includegraphics{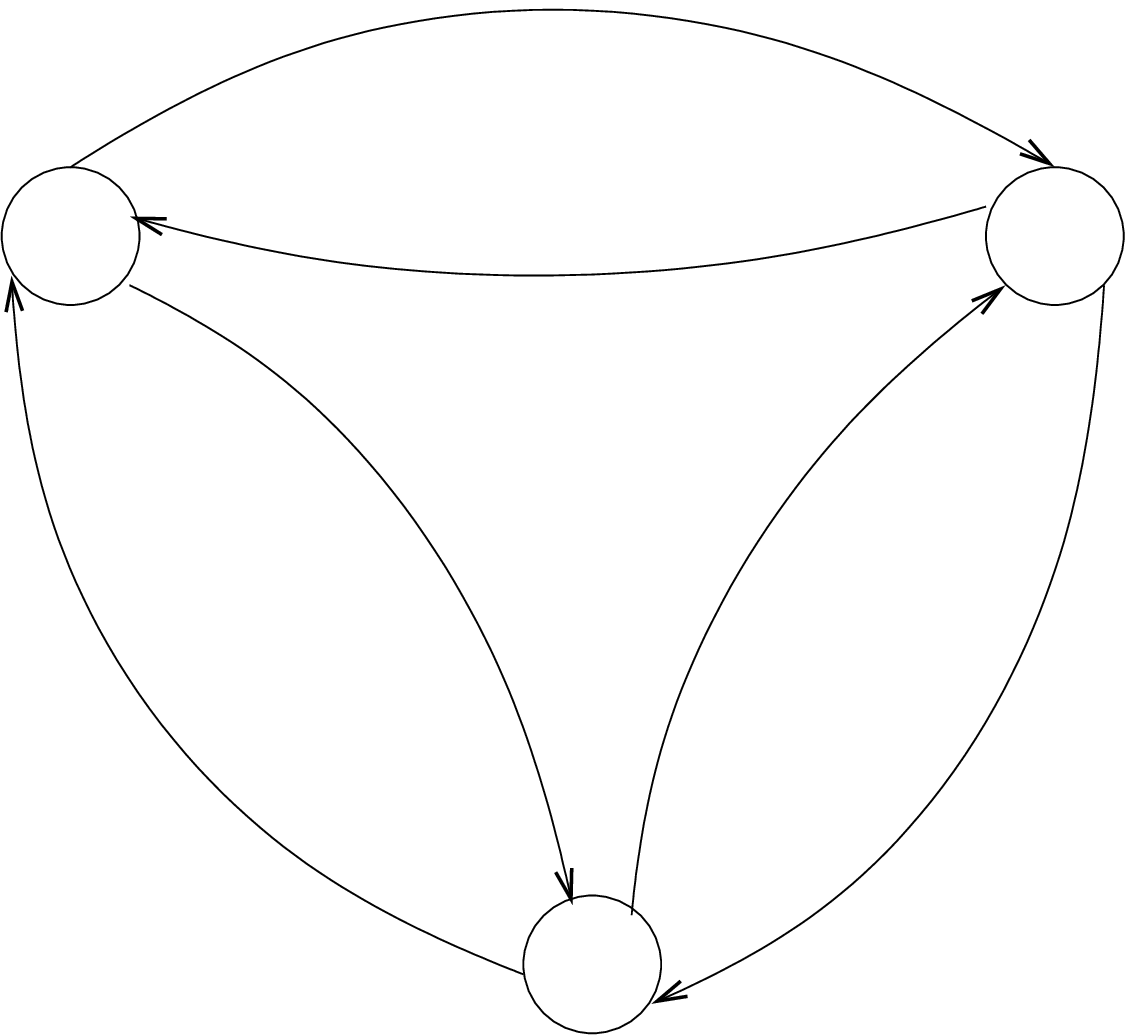}}
\put(-122,115){$q$}
\put(-122,145){$p$}
\put(-85,64){$q$}
\put(-142,65){$q$}
\put(-181,49){$p$}
\put(-35,49){$p$}
\put(-103,8){$3$}
\put(-18,108){$2$}
\put(-202,108){$1$} 
\caption{ }
\end{figure} (see Figure~2). 
Here, as a consequence of Corollary~\ref{cor2} and
Theorem~\ref{theorem_3} we get 
\begin{prop}\label{prop3-6} Suppose that a Jackson network with
three nodes and a routing matrix \eqref{3-15} satisfies conditions (A) and (B). Then 
\begin{multline}\label{3-16}
- ~\frac{1- p^3 - q^3 - 3pq}{1-
  pq}  \min_i \left(\sqrt\mu_i - \sqrt\nu_i\right)^2  ~\leq~ \\ \log r_e^* ~\leq~ - ~\frac{1- p^3 - q^3 - 3pq}{1-
  pq}  ~\sup_{t > 0}~\min_{1\leq i \leq d}~ t
\Bigl( \frac{\mu_i}{1 +t} -\nu_i \Bigr).
\end{multline}
If moreover the equalities \eqref{3-10} and \eqref{3-11} hold for some $i_0 \in \{1, \ldots, d\}$, then 
\be\label{3-17}
\log r_e^* ~=~ - ~\frac{1- p^3 - q^3 - 3pq}{1-
  pq}  \min_i \left(\sqrt\mu_i - \sqrt\nu_i\right)^2.  
\ee
In particular  \eqref{3-17} holds if at least one of the conditions (i) or
  (ii) of Corollary~\ref{cor3-4} is satisfied. 
  \end{prop}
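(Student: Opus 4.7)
The plan is to apply Theorem~\ref{theorem_3} and Corollary~\ref{cor2} to this circulant three-node network, exploiting the cyclic invariance of the routing matrix \eqref{3-15}.

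First I would compute $G_{ii}$. Since $P$ is circulant, so is $G = (I-P)^{-1}$; a direct expansion gives $\det(I-P) = 1 - 3pq - p^3 - q^3$, and the $(i,i)$ cofactor equals $1 - pq$, whence
\[
G_{11} = G_{22} = G_{33} = \frac{1 - pq}{1 - p^3 - q^3 - 3pq}.
\]
The left inequality of \eqref{3-16} then follows immediately from Theorem~\ref{theorem_3} by factoring this common value of $G_{ii}$ out of the minimum.

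For the right inequality of \eqref{3-16} the crucial step is to show that for every $t > 0$ the constant vector $\gamma = (t, t, t)$ belongs to $\Gamma$. By cyclic symmetry, $Q_{12} = Q_{23} = Q_{31} = \alpha := (p+q^2)/(1-pq)$ and $Q_{13} = Q_{21} = Q_{32} = \beta := (q+p^2)/(1-pq)$; the assumptions $p < q$ and $p + q < 1$ give $0 < \alpha < \beta$, since $\beta - \alpha = (q-p)(1 - p - q)/(1 - pq) > 0$. Setting $A := \log(1+t)$, $B := \log(1+\alpha t)$ and $C := \log(1+\beta t)$, so that $A > C > B > 0$, the vectors defined by \eqref{2-1} appear as the cyclic shifts
\[
\overrightarrow{\gamma_1} = (A, C, B), \qquad \overrightarrow{\gamma_2} = (B, A, C), \qquad \overrightarrow{\gamma_3} = (C, B, A).
\]
I would then invoke part~2 of Proposition~\ref{Gamma} with the cyclic choices $\theta_1 = \epsilon^2$, $\theta_2 = \epsilon^3$, $\theta_3 = \epsilon^1$: the six required strict inequalities $\gamma_i^k < \gamma_{\theta_i}^k$ (one for each pair $i \neq k$, since $Q_{ki} > 0$ always holds here) reduce to the two elementary comparisons $B < C$ and $C < A$. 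Once $(t,t,t) \in \Gamma$ is established, Corollary~\ref{cor2} applied to these diagonal vectors, together with the equality of the $G_{ii}$'s, yields the right-hand side of \eqref{3-16}.

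Finally, the equality \eqref{3-17} is obtained exactly as in the proof of Corollary~\ref{cor3-4}: conditions \eqref{3-10} and \eqref{3-11} imply
\[
\sup_{t > 0} \min_{1 \le i \le d} t\Bigl(\frac{\mu_i}{1+t} - \nu_i\Bigr) = \min_{1 \le i \le d} (\sqrt{\mu_i} - \sqrt{\nu_i})^2,
\]
so the two sides of \eqref{3-16} coincide; the cases (i) and (ii) of Corollary~\ref{cor3-4} are covered because they were shown there to imply \eqref{3-10}--\eqref{3-11}. The only non-routine step is the membership $(t,t,t) \in \Gamma$, but the cyclic structure of $P$ reduces it to the chain $B < C < A$.
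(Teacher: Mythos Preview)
Your proof is correct and follows essentially the same route as the paper's: compute $G_{ii}$ from the circulant structure, invoke Theorem~\ref{theorem_3} for the lower bound, show $(t,t,t)\in\Gamma$ for all $t>0$ to get the upper bound via Corollary~\ref{cor2}, and then argue as in Corollary~\ref{cor3-4} for the equality case. The only cosmetic difference is that you verify $(t,t,t)\in\Gamma$ through part~2 of Proposition~\ref{Gamma} with the fixed cyclic choices $\theta_i=\epsilon^{i+1}$, whereas the paper checks the defining inequality~\eqref{2-2} directly (choosing $\overrightarrow{\gamma_2}$ or $\overrightarrow{\gamma_3}$ depending on whether $v_2>0$); both reductions rest on the same chain $B<C<A$, so the arguments are equivalent.
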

\begin{proof}
Indeed, a straightforward calculation shows that 
\[
G ~\stackrel{def}{=}~ (Id - P)^{-1} ~=~ \frac{1}{1-p^3-q^3-3pq} 
\left(\begin{matrix}
1-pq& q^2+p& p^2+q\\
p^2+q& 1-pq& q^2+p\\
q^2+p& p^2+q& 1-pq
\end{matrix}
\right).
\]
The first inequality of \eqref{3-16} is therefore a straightforward consequence of
Theorem~\ref{theorem_3}. By Corollary~\ref{cor2},  to prove the second inequality  of \eqref{3-16} it is 
sufficient to show that for any $t>0$, the vector ${\bf
  \gamma}=(t,t,t)$  belongs to
the set $\Gamma$. For this let us first notice that
under the hypotheses of our proposition, the matrix of hitting probabilities $Q=(Q_{ij}, \; i,j=1,2,3)$ is given by 
\[
Q ~=~ \frac{1}{1-pq} \left(\begin{matrix} 
1-pq& q^2+p& p^2+q\\
p^2+q& 1-pq& q^2+p\\
q^2+p& p^2+q& 1-pq
\end{matrix}\right)
\]
Without any restriction of generality we can
assume that $p\leq q$. Then 
\[
Q_{31} = Q_{12} = Q_{23} ~\leq~ Q_{21} = Q_{32} = Q_{13} < 1 
\]
and consequently, for $\gamma =(t,t,t)$ with $t>0$ 
and any $v=(v_1,v_2,v_3)\in\R_+^3$ with $v_1=0$ and $(v_1,v_2) \neq (0,0)$, one gets 
\begin{align*}
\overrightarrow{\gamma_1}\cdot v &~=~v_2  \log\bigl(1  + Q_{21}t\bigr)+ v_3 \log\bigl(1 +
Q_{31}t \bigr) \\ 
&~<~ \begin{cases}
 v_2 \log (1+t)+  v_3\log\bigl(1 +
Q_{32}t \bigr) ~=~ \overrightarrow{\gamma_2}\cdot v &\text{ if $v_2 > 0$,} \\
v_3 \log(1+t)~=~ \overrightarrow{\gamma_3} \cdot v &\text{ if $v_3 > v_2  = 0$.}
\end{cases}
\end{align*}
from which it follows that 
\[
\overrightarrow{\gamma_1}\cdot v  ~<~ \max_{j} \overrightarrow{\gamma_j}\cdot v. 
\]
Permuting indices shows  that for any $i\in\{1,2,3\}$ and any non-zero vector  $v=(v_1,v_2,v_3)\in\R_+^3$ with
$v_i =0$,  
\[
\overrightarrow{\gamma_i}\cdot v ~<~ \max_{j} \overrightarrow{\gamma_j}\cdot v, \quad \text{ if }  \gamma =(t,t,t)  \text{ with } t>0
\] 
Hence, for any $t > 0$,  the vector ${\bf \gamma} = (t,t,t)$  belongs to the set $\Gamma$ and consequently,  by Corollary~\ref{cor2}, the
second inequality of \eqref{3-16} is also verified. The first part of our proposition is therefore proved.  The
second part of Proposition~\ref{prop3-6} follows from \eqref{3-16} by using  the same arguments as in the proof of
Corollary~\ref{cor3-4}. 
\end{proof}

\section{Background}
For a given $\Lambda\subset\{1,\ldots,d\}$, denote  $\Lambda^c = \{1,\ldots,d\}\setminus\Lambda$ and consider the sets 
$\R_+^{\Lambda,d} ~\stackrel{def}{=}~ \{ x\in\R^d : x^j\geq 0,   \; \, \forall j\not\in\Lambda\}$  and 
\[
{\cal B}_\Lambda ~\stackrel{def}{=}~  \left\{ \alpha = (\alpha^1, \ldots,\alpha^d) \in\R^d ~:~ \;  \alpha^i\leq \log\Bigl( \, \sum_{j=1}^d p_{ij}e^{\alpha^j} +
p_{i0} \Bigr),  \; \, \forall i\not\in\Lambda  \right\}.
\] 
For $\beta \in\R^{\Lambda,d}_{+} $ and $i,j\in \{ 0,1,\ldots ,d\}, \; i\not= 0,$ we define 
\[
m^\Lambda_{ij}(\beta) ~\stackrel{def}{=}~ p_{ij}e^{-\beta^i} + \sum_{n\geq 1} \sum_{j_1,
\ldots ,j_n \in
\Lambda^c} p_{ij_1}p_{j_1j_2}\cdots p_{j_nj}\exp \left(- \beta^i - \sum_{k=1}^n
\beta^{j_k}\right).
\] 
The following result  provides a suitable homeomorphism from the set $\R_+^{\Lambda,d} $ onto ${\cal B}_\Lambda$, this is a straightforward  consequence of Proposition~8.1 of the paper~ \cite{Ignatiouk:01}. 
\begin{prop}\label{prop_background}({\em  Proposition~8.1~ \cite{Ignatiouk:01}}) Under hypothesis (A), 
\begin{itemize}
\item[--] for any
$\Lambda\subset\{1,\ldots,d\}$ and $\beta\in\R_+^{\Lambda,d} $,  the system of equations 
\[
\begin{cases}
\beta^i = \alpha^i, \quad \text{ for } \; i\in\Lambda,\\
\beta^i =  \log\left( \sum_{j=1}^d p_{ij} e^{\alpha^j - \alpha^i} +
p_{i0}e^{-\alpha^i}\right), \; \text{ for } \; i\in\{1,\ldots,d\}\setminus\Lambda
\end{cases}
\]
has a unique solution $\alpha=\alpha_\Lambda(\beta)\in{\cal B}_\Lambda$ :
\begin{equation}\label{solution} \left\{ \begin{array}{lll}
\alpha^i (\beta) & = & \beta^i \; \mbox{ for } i \in \Lambda,\\
\alpha^i(\beta) & = & \log
\left(\sum_{j\in \Lambda} m^\Lambda_{ij}(\beta) e^{\beta^j} +
m^\Lambda_{i0}(\beta)\right) \; \mbox{
for } i \in \Lambda^c, \end{array} \right. \end{equation} 
\item[--] the mapping $\beta\to\alpha_\Lambda(\beta)$ determines a homeomorphism
from $\R_+^{\Lambda,d}$ onto the set ${\cal B}_\Lambda$;
\item[--] the function $R(\alpha_\Lambda(\beta))$
is strictly convex in $\R_+^{\Lambda,d}$. 
\end{itemize}
\end{prop}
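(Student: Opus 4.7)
The plan is to verify each of the three assertions by direct computation, essentially reducing the first to linear algebra, the second to exhibiting an explicit inverse, and the third to invoking the cited reference.

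For the first assertion (existence, uniqueness, and the explicit formula), the system is trivial on $\Lambda$, where $\alpha^i=\beta^i$. For $i\in\Lambda^c$, I would set $A^j:=e^{\alpha^j}$; the equations then become the linear system
\[
e^{\beta^i} A^i \;-\; \sum_{j\in\Lambda^c} p_{ij} A^j \;=\; \sum_{j\in\Lambda} p_{ij} e^{\beta^j} + p_{i0}, \qquad i\in\Lambda^c,
\]
i.e.\ $(I-M)\,A_{\Lambda^c}=b$ with $M_{ij}=p_{ij}e^{-\beta^i}$ for $i,j\in\Lambda^c$. Since $\beta^i\ge 0$ on $\Lambda^c$, the entries of $M$ are dominated by those of the principal submatrix $(p_{ij})_{i,j\in\Lambda^c}$, and by $(A_1)$ the routing matrix has spectral radius strictly less than one, hence so does $M$. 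Thus $I-M$ is invertible with $(I-M)^{-1}=\sum_{n\ge 0} M^n$, which gives existence, uniqueness, and positivity of $A_{\Lambda^c}$. Expanding the Neumann series and identifying $m_{ij}^\Lambda(\beta)$ as the weighted path sum from $i$ to $j\in\Lambda\cup\{0\}$ over trajectories staying in $\Lambda^c$ at all intermediate steps recovers exactly \eqref{solution}.

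For the second assertion (homeomorphism), continuity of $\beta\mapsto\alpha_\Lambda(\beta)$ is clear from \eqref{solution}, since the defining series for $m_{ij}^\Lambda(\beta)$ converge uniformly on compact subsets of $\R_+^{\Lambda,d}$ thanks to the spectral-radius bound just used. The candidate inverse $\alpha\mapsto\beta$ is read off the original system: $\beta^i=\alpha^i$ for $i\in\Lambda$ and $\beta^i=\log\bigl(\sum_j p_{ij} e^{\alpha^j-\alpha^i}+p_{i0}e^{-\alpha^i}\bigr)$ for $i\in\Lambda^c$. The defining inequality of ${\cal B}_\Lambda$ is exactly what guarantees $\beta^i\ge 0$ on $\Lambda^c$, so this map sends ${\cal B}_\Lambda$ into $\R_+^{\Lambda,d}$; it is continuous by inspection and mutually inverse to $\alpha_\Lambda$ by construction.

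The third assertion (strict convexity of $R(\alpha_\Lambda(\beta))$) is the main obstacle: the function $R$ is not introduced in the present excerpt, and strict convexity does not follow formally from the preceding steps but requires a specific analysis of how $R$ interacts with the parametrization $\alpha_\Lambda$. Since the statement is announced as a direct consequence of Proposition~8.1 of~\cite{Ignatiouk:01}, I would simply invoke that reference for this part, as reproducing its proof would take us well outside the scope of the present excerpt.
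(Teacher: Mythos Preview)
Your argument is correct, but note that the paper does not actually prove this proposition at all: it is stated as Proposition~8.1 of~\cite{Ignatiouk:01} and imported wholesale, with the surrounding text saying only that it is ``a straightforward consequence'' of that reference. So there is nothing to compare against on the paper's side beyond the bare citation.

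Your reconstruction of the first two parts is sound and more explicit than anything in the present paper. The substitution $A^j=e^{\alpha^j}$ does reduce the $\Lambda^c$-equations to $(I-M)A_{\Lambda^c}=b$ with $M_{ij}=p_{ij}e^{-\beta^i}$; the spectral-radius bound under $(A_1)$ legitimates the Neumann series, and term-by-term identification with the path-sum definition of $m^\Lambda_{ij}(\beta)$ gives \eqref{solution}. One small point you leave implicit: membership of the solution in ${\cal B}_\Lambda$ follows because for $i\in\Lambda^c$ the equation reads $e^{\beta^i}e^{\alpha^i}=\sum_j p_{ij}e^{\alpha^j}+p_{i0}$ and $\beta^i\ge 0$ forces $e^{\alpha^i}\le\sum_j p_{ij}e^{\alpha^j}+p_{i0}$. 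For the homeomorphism, your explicit inverse is exactly right, and surjectivity onto ${\cal B}_\Lambda$ follows from applying uniqueness to the $\beta$ constructed from an arbitrary $\alpha\in{\cal B}_\Lambda$.

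On the third part you are right to defer to the reference; strict convexity of $\beta\mapsto R(\alpha_\Lambda(\beta))$ is the genuinely nontrivial assertion and is not recoverable from the linear-algebra argument alone. (Minor remark: $R$ is in fact defined in the paper immediately after the proposition, as $R=R_{\{1,\ldots,d\}}$, so your comment that it is absent from the excerpt is not quite accurate, though this does not affect the substance of your proof.)
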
 
This result will be used to investigate  the different Laplace transforms  of the jump distribution on the different ``faces"  of the space $\Z_+^d$.
For $\Lambda \subset  \{1, \cdots, d\}$ and $\alpha \in \R^d$, the Laplace transform of the jump distribution corresponding to the face $\Lambda$ is defined by  
\[
R_{\Lambda} (\alpha) ~\stackrel{def}{=}~ \sum _{j=1}^d \lambda _j (e^{\alpha
         ^j} -1) + \sum _{j \in \Lambda} \mu _j \left( \sum _{k=1}^d
       p_{jk}e^{ \alpha ^k - \alpha^j}+ p_{j0}e^{  - \alpha^j} -1\right)   
\]
and for $\Lambda=\{1,\ldots,d\}$, we denote  
\[
R(\alpha)  ~\stackrel{def}{=}~ R_{\{1,\ldots,d\}} (\alpha) ~=~ \sum _{j=1}^d \lambda _j (e^{\alpha
         ^j} -1) + \sum _{j = 1}^d \mu _j \left( \sum _{k=1}^d
       p_{jk}e^{ \alpha ^k - \alpha^j}+ p_{j0}e^{  - \alpha^j} -1\right). 
\]
As a consequence of the above proposition one gets the following statement . 
\begin{lemma}\label{lemma_background}
Under hypothesis (A), for any $i \in \{1, \cdots, d\}$ and $s \in ~]-1, + \infty[$ the system of equations 
\be\label{system}
e^{\alpha^i}= 1 + s, \quad e^{\alpha^j} ~=~ \sum_{k=1}^d p_{jk} e^{\alpha^k} + p_{j0}, \quad j\in\{1,\ldots,d\}\setminus\{i\}
\ee
has a unique solution $\alpha  ~=~ \alpha(s)= (\alpha^1(s), \cdots ,\alpha^d(s))$ given by 
\[
\alpha^j(s)  ~=~  \log\left( 1+Q_{ji} s \right) , \quad j\in\{1,\ldots,d\}\setminus\{i\}.
\]
Moreover, for any $\Lambda \subset  \{1, \cdots, d\}$, this solution  satisfies the equality 
\[
 R_{\Lambda}(\alpha(s))  ~=~ \frac{s}{G_{ii}} \left(
\nu_i - \frac{\mu_i}{1 + s} \1 _{\Lambda}(i) \right).
\]  
\end{lemma}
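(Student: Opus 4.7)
The plan is to derive the two assertions separately. For the first, I would invoke Proposition~\ref{prop_background} with $\Lambda = \{i\}$ and $\beta \in \R_+^{\{i\},d}$ given by $\beta^i = \log(1+s)$ and $\beta^l = 0$ for $l \neq i$; the system in the proposition then becomes exactly \eqref{system}, yielding a unique solution $\alpha(s) = \alpha_{\{i\}}(\beta)$. Plugging $\beta^l = 0$ for $l \in \{1,\ldots,d\} \setminus \{i\}$ into the formula \eqref{solution} reduces the series defining $m^{\{i\}}_{j,i}(\beta)$ to
\[
p_{ji} + \sum_{n \geq 1}\sum_{j_1,\ldots,j_n\in\{1,\ldots,d\}\setminus\{i\}} p_{j,j_1}p_{j_1,j_2}\cdots p_{j_n,i},
\]
which is the hitting probability $Q_{ji}$ by path decomposition for the auxiliary chain on $\{0,\ldots,d\}$, while $m^{\{i\}}_{j,0}(\beta) = 1 - Q_{ji}$ by complementation (the chain started at $j$ is eventually either absorbed at $0$ through $\Lambda^c$ or hits $i$). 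Consequently $e^{\alpha^j(s)} = Q_{ji}(1+s) + (1 - Q_{ji}) = 1 + Q_{ji}s$, which is strictly positive for $s > -1$, giving the stated closed form. Alternatively one could just guess the formula and verify it by substitution into \eqref{system}, using the first-step recursion $Q_{ji} = p_{ji} + \sum_{k \neq i} p_{jk}Q_{ki}$ together with $p_{jj}=0$ and $\sum_{k=0}^d p_{jk} = 1$.

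For the identity on $R_\Lambda$, I would compute directly. Setting $Q_{ii} = 1$ so that $e^{\alpha^j(s)} = 1 + Q_{ji}s$ for all $j$, the linear part becomes
\[
\sum_{j=1}^d \lambda_j(e^{\alpha^j(s)} - 1) = s\sum_{j=1}^d \lambda_j Q_{ji}.
\]
The traffic equations \eqref{eq1-2} give $\nu = \lambda G$, hence $\nu_i = \sum_j \lambda_j G_{ji} = G_{ii}\sum_j \lambda_j Q_{ji}$ via $G_{ji} = Q_{ji}G_{ii}$, so this sum equals $s\nu_i/G_{ii}$. In the nonlinear part $\sum_{j \in \Lambda}\mu_j\bigl(\sum_k p_{jk}e^{\alpha^k-\alpha^j} + p_{j0}e^{-\alpha^j} - 1\bigr)$, each summand indexed by $j \in \Lambda\setminus\{i\}$ vanishes by \eqref{system}. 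Thus when $i \notin \Lambda$ the nonlinear part is zero, matching the formula with $\1_{\Lambda}(i) = 0$.

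The only remaining task, corresponding to $i \in \Lambda$, is to simplify the $j=i$ bracket. Pulling out $e^{-\alpha^i(s)} = 1/(1+s)$ and using $p_{ii} = 0$ together with $\sum_{k \neq i} p_{ik} + p_{i0} = 1$,
\[
\sum_k p_{ik}e^{\alpha^k - \alpha^i} + p_{i0}e^{-\alpha^i} - 1 = \frac{1 + s\sum_{k\neq i} p_{ik}Q_{ki}}{1+s} - 1.
\]
The key identity $\sum_k p_{ik}Q_{ki} = 1 - 1/G_{ii}$ (the one-step decomposition of the return probability to $i$ combined with the renewal formula $G_{ii} = (1 - P_i(\text{return to } i))^{-1}$) then reduces this bracket to $-s/(G_{ii}(1+s))$; multiplying by $\mu_i\1_{\Lambda}(i)$ and adding to the linear contribution yields the claimed formula. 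The principal obstacle is recognizing the two hitting-probability identifications, namely $m^{\{i\}}_{j,i}(0) = Q_{ji}$ and $\sum_k p_{ik}Q_{ki} = 1 - 1/G_{ii}$; everything else is straightforward bookkeeping.
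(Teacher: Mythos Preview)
Your proposal is correct and follows essentially the same route as the paper: both apply Proposition~\ref{prop_background} with $\Lambda=\{i\}$ and $\beta^i=\log(1+s)$, $\beta^l=0$ for $l\neq i$, identify $m^{\{i\}}_{ji}(\beta)=Q_{ji}$ and $m^{\{i\}}_{j0}(\beta)=1-Q_{ji}$, and then compute $R_\Lambda(\alpha(s))$ directly using the two identities $\sum_j\lambda_jQ_{ji}=\nu_i/G_{ii}$ and $1-\sum_kp_{ik}Q_{ki}=1/G_{ii}$. You supply a bit more probabilistic justification for these identities than the paper does, but the argument is otherwise the same.
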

\begin{proof}
Indeed, for $\beta =(\beta^1,\ldots,\beta^d)$ with  $\beta^i = \log(1 + s)$ and $\beta^j = 0$ for $j\not= i$, one gets 
\[
m^{\{i\}}_{ji}(\beta) = Q_{ji}  \quad \text{ and } \quad m^{\{i\}}_{j0}(\beta)  = 1 - Q_{ji} 
\]
for all $j\in\{1,\ldots,d\}$.  Hence, the first assertion of Lemma~\ref{lemma_background} is a straightforward consequence of Proposition~\ref{prop_background}.  Moreover,  for any $\Lambda\subset\{1,\ldots,d\}$, 
\begin{align*}
R_\Lambda(\alpha(s))  &=  \sum _{j=1}^d \lambda _j \left(e^{\alpha^j(s)} -1\right) +  \mu _i \left( \sum _{k=1}^d
       p_{ik}e^{ \alpha^k(s) - \alpha^i(s)}+ p_{j0}e^{  - \alpha^i(s)} -1\right)  \1 _{\Lambda}(i)  \\
       &=  \sum _{j=1}^d \lambda _j Q_{ji} s +  \frac{\mu _i}{1 + s}\left( \sum _{k=1}^d
       p_{ik}e^{ \alpha^k(s) }+ p_{j0} - 1 - s\right)  \1 _{\Lambda}(i) \\
      &=  s \sum _{j=1}^d \lambda _j Q_{ji}  +  \frac{ \mu _i s}{1 + s}\left( \sum _{k=1}^d
       p_{ik} Q_{ki}  - 1\right)  \1 _{\Lambda}(i) 
\end{align*}
The last equality combined with the relations 
\[
\sum _{j=1}^d \lambda _j Q_{ji}    ~=~  \frac{1}{G_{ii}} \sum _{j=1}^d \lambda _j G_{ji}   ~=~ \frac{\nu_i}{G_{ii}} \quad \text{and}  \quad  1 - \sum _{k=1}^d
        p_{ik} Q_{ki}  ~=~ \frac{1}{G_{ii}} 
\]
proves the second assertion of Lemma~\ref{lemma_background} 
\end{proof}

\section{Proof of Theorem~\ref{pr1}}\label{sec3}
We begin the proof of this theorem with the following lemma.

\begin{lemma}\label{lem4-1} For any  $\gamma\in\R_+^d$ and $1\leq i\leq d$, the function
  $f_i(x) = \exp(\overrightarrow{\gamma_i}\cdot x)$ satisfies the equality 
\be\label{4-1}
{\cal L}f_i (x) ~=~  \frac{\gamma_i}{G_{ii}}\left( 
\nu_i - \1_{\{x_i > 0\}} \frac{\mu_i}{1+\gamma_i} \right)  f_i(x), \quad x\in\Z_+^d
\ee
\end{lemma}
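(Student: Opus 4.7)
The plan is to reduce the lemma to a direct application of Lemma~\ref{lemma_background}, after identifying ${\cal L}f_i(x)/f_i(x)$ with the face-Laplace transform $R_\Lambda(\overrightarrow{\gamma_i})$ for the appropriate face $\Lambda$.

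First, I would exploit the multiplicative structure $f_i(x+y) = f_i(x)\exp(\overrightarrow{\gamma_i}\cdot y)$ to write, using \eqref{eq1-1} and the fact that transitions involving $-\epsilon^j$ are admissible from $x$ only if $x_j\geq 1$,
\[
\frac{{\cal L}f_i(x)}{f_i(x)} = \sum_{j=1}^d \lambda_j\bigl(e^{\gamma_i^j}-1\bigr) + \sum_{j\in\Lambda(x)} \mu_j\left(\sum_{k=1}^d p_{jk} e^{\gamma_i^k - \gamma_i^j} + p_{j0}e^{-\gamma_i^j} - 1\right),
\]
where $\Lambda(x) = \{j\in\{1,\ldots,d\} : x_j > 0\}$. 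By definition of $R_\Lambda$ in Section~4, this is exactly $R_{\Lambda(x)}(\overrightarrow{\gamma_i})$.

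Next I would identify $\overrightarrow{\gamma_i}$ with the vector $\alpha(s)$ produced by Lemma~\ref{lemma_background} at parameter $s=\gamma_i$ (and index $i$). Indeed $Q_{ii} = \P_i(\tau_i<\infty) = 1$ by the convention $\tau_i = \inf\{n\geq 0:\xi_n=i\}$, so $\gamma_i^i = \log(1+\gamma_i) = \alpha^i(\gamma_i)$, and for $j\neq i$ the definition \eqref{2-1} gives $\gamma_i^j = \log(1+Q_{ji}\gamma_i) = \alpha^j(\gamma_i)$. Thus $\overrightarrow{\gamma_i} = \alpha(\gamma_i)$.

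Finally, applying Lemma~\ref{lemma_background} with $\Lambda = \Lambda(x)$ yields
\[
\frac{{\cal L}f_i(x)}{f_i(x)} = R_{\Lambda(x)}(\alpha(\gamma_i)) = \frac{\gamma_i}{G_{ii}}\left(\nu_i - \frac{\mu_i}{1+\gamma_i}\1_{\Lambda(x)}(i)\right),
\]
and since $\1_{\Lambda(x)}(i) = \1_{\{x_i>0\}}$, multiplying by $f_i(x)$ gives \eqref{4-1}. There is no real obstacle: the whole content is the recognition that the definition \eqref{2-1} of the vectors $\overrightarrow{\gamma_i}$ is precisely tailored so that they coincide with the solutions $\alpha(s)$ of the system \eqref{system}, which makes Lemma~\ref{lemma_background} directly applicable on every face simultaneously.
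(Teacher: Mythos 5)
Your proof is correct and follows essentially the same route as the paper: writing ${\cal L}f_i(x)/f_i(x) = R_{\Lambda(x)}(\overrightarrow{\gamma_i})$, recognizing $\overrightarrow{\gamma_i}$ as the solution $\alpha(\gamma_i)$ of system \eqref{system}, and invoking Lemma~\ref{lemma_background}. The only difference is that you spell out the "straightforward calculation" for ${\cal L}f_\alpha$ and the observation $Q_{ii}=1$ explicitly, which the paper leaves implicit.
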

\begin{proof} A straightforward calculation shows that for any  $\alpha= (\alpha
  ^1,\ldots,\alpha^d) \in\R_+ ^d$, the exponential function $f_\alpha(x) = \exp(\alpha
  \cdot x ) $ satisfies the equality 
\[
 {\cal L}f_\alpha (x) ~=~ R_{\Lambda(x)} (\alpha)  f_\alpha (x),
\] 
where for $x\in\R_+^d$, we denote by  $\Lambda(x)$  the set of all $j\in\{1,\ldots,d\}$ for
which $x^j > 0$ and  for $\Lambda\subset\{1,\ldots, d\}$, 
\[
R_{\Lambda} (\alpha) ~=~ \sum _{j=1}^d \lambda _j (e^{\alpha
         ^j} -1) + \sum _{j \in \Lambda} \mu _j \left( \sum _{k=1}^d
       p_{jk}e^{ \alpha ^k - \alpha^j} + p_{j0}e^{  - \alpha^j}  -1\right).   
\]
Furthermore,  by Lemma~\ref{lemma_background}, from the definition of the vector $\overrightarrow{\gamma_i}$ it follows that  
$\alpha = (\alpha^1,\ldots,\alpha^d) = \overrightarrow{\gamma_i}$ is the unique solution of the system \eqref{system} for $s=\gamma_i$ and 
\[
R_{\Lambda(x)} (\overrightarrow{\gamma_i}) ~=~ \frac{\gamma_i}{G_{ii}}\left( 
\nu_i - \1_{\{x_i > 0\}} \frac{\mu_i}{1+\gamma_i} \right) 
\]
The equality \eqref{4-1} is therefore verified. 
\end{proof}
Now we are ready to complete the proof of Theorem~\ref{pr1}. For the function $h_\gamma$
defined by \eqref{2-3}, Lemma~\ref{lem4-1} proves that 
\begin{align}
{\cal L}h_\gamma(x) 
&=~  \sum_{i\in\Lambda(x)} \frac{\gamma_i}{G_{ii}}\left( 
\nu_i -  \frac{\mu_i}{1+\gamma_i} \right) \exp( \overrightarrow{\gamma_i }\cdot x) +
\sum_{i\not\in\Lambda(x)} \frac{\nu_i\gamma_i}{G_{ii}} \exp(
\overrightarrow{\gamma_i }\cdot x) \label{4-2} \\
&\leq~  \max_{i\in\Lambda(x)} ~ \frac{\gamma_i}{G_{ii}} \left(\nu_i  - \frac{\mu_i}{1+\gamma_i}\right) ~h_\gamma(x)  +
\sum_{i\not\in\Lambda(x)} \frac{\nu_i}{G_{ii}}\gamma_i \exp(
\overrightarrow{\gamma_i} \cdot x).\nonumber
\end{align} 
To get the inequality 
\be\label{4-3}
\limsup_{|x|\to\infty} {\cal L}h_\gamma (x)/h_\gamma(x) ~\leq~ \max_{1\leq i \leq d}
~\frac{\gamma_i}{G_{ii}} \left(\nu_i  - \frac{\mu_i}{1+\gamma_i}\right) 
\ee
it is therefore sufficient to show that 
\[
\frac {\max _{i \not\in \Lambda (x)} \exp(
\overrightarrow{\gamma_i }\cdot x)}{ \max _{i \in \Lambda (x)} \exp(
\overrightarrow{\gamma_i }\cdot x)} ~\to~ 0 \quad \text{ as } \quad
|x|\to\infty, 
\]
or equivalently that 
\be\label{4-4} 
\lim _{|x| \to \infty} 
\exp \left (  \max _{i\not\in \Lambda(x) } \overrightarrow{\gamma_i} \cdot x - \max _{i \in \Lambda(x) }
\overrightarrow{\gamma_i} \cdot x  \right ) =0.
\ee
The last relation follows from the definition of the set $\Gamma$. Indeed, using the
inequality \eqref{2-2} with $v=x/|x|$ for an arbitrary $x\in\R_+^d\setminus\{0\}$, one gets 
\[
\overrightarrow{\gamma_i} \cdot \frac{x}{|x|} ~<~ \max_{j\not= i} \overrightarrow{\gamma_j} \cdot \frac{x}{|x|}, \quad  \quad
\forall  i\not\in\Lambda(x)
\]
from which it follows that 
\[
\max _{i \not\in \Lambda(x) } \overrightarrow{\gamma_i} \cdot \frac{x}{|x|} < \max_{j=1,\ldots,d}  
\overrightarrow{\gamma_j }\cdot \frac{x}{|x|} ~=~ \max_{j\in\Lambda(x)} \overrightarrow{\gamma_j} \cdot \frac{x}{|x|} 
\]
for any non-zero $x\in\R_+^d$.  The function 
\[
x\to \max _{i \not\in \Lambda(x) } \overrightarrow{\gamma_i }
\cdot x - \max_{i\in\Lambda(x)} \overrightarrow{\gamma_i}  \cdot x ~=~ \max _{i \not\in \Lambda(x) } \overrightarrow{\gamma_i} 
\cdot x  - \max_{j=1,\ldots,d}  \overrightarrow{\gamma_j }\cdot x
\]
being upper semi-continuous on the compact set
$S_+^d = \{x\in\R_+^d : |x| = 1\}$, from the above inequality it follows  that 
\be\label{4-5}
\max _{i \not\in \Lambda(x) } \overrightarrow{\gamma_i }
\cdot x - \max_{i \in\Lambda(x)} \overrightarrow{\gamma_i}  \cdot x ~<~ - \delta |x|, \quad \forall x\in \R_+^d\setminus\{0\}
\ee
with some $\delta > 0$, and consequently, \eqref{4-4} holds. The inequality \eqref{4-3} is therefore proved. 
Moreover, \eqref{4-2} and \eqref{4-5} applied for $x\in\R_+^d$
with $\Lambda(x)=\{i\}$ prove that 
\[
\limsup_{|x|\to\infty, \; \Lambda(x) =\{i\}} {\cal L}h_\gamma (x)/h_\gamma(x)  ~=~
\frac{\gamma_i}{G_{ii}} \left(\nu_i  - \frac{\mu_i}{1+\gamma_i}\right). 
\]
Using this relation together with \eqref{4-3} one gets \eqref{2-4}.

\section{Proof of Proposition~\ref{Gamma} } \label{h-banach}
We begin the proof of Proposition~\ref{Gamma} with the following lemma. 
\begin{lemma}\label{Hahn-B}
For $u_1, \cdots ,u_d\in\R^d$, the following two  properties are equivalent:
\begin{enumerate}
\item for any $v\in \R^d_+ \setminus\{0 \}$, there exists some $i \in \{1, \cdots ,d\}$ such that $u_i \cdot v >0$,
\item  there exists  some $\theta  = (\theta^1, \cdots , \theta ^d) \in {\cal M}_1$ such
  that $\sum_{j=1}^d \theta ^j u_j > 0$.
\end{enumerate}
\end{lemma}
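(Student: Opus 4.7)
\medskip

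\textbf{Proof plan for Lemma~\ref{Hahn-B}.} The plan is to handle the two implications separately, with the nontrivial direction coming from a standard convex separation argument.

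The implication (2)~$\Rightarrow$~(1) is essentially by inspection. Given $\theta\in{\cal M}_1$ with $w:=\sum_{j=1}^d \theta^j u_j >0$ componentwise, any $v\in\R_+^d\setminus\{0\}$ has at least one positive coordinate, so $w\cdot v = \sum_k w^k v^k > 0$; hence $\sum_{j=1}^d \theta^j (u_j\cdot v) > 0$, which forces $u_i\cdot v > 0$ for at least one $i$.

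For (1)~$\Rightarrow$~(2), I would argue by contraposition. Suppose (2) fails, i.e.\ the compact convex set
\[
C ~=~ \Bigl\{\sum_{j=1}^d \theta^j u_j ~:~\theta\in{\cal M}_1\Bigr\} ~=~ \mathrm{conv}\{u_1,\ldots,u_d\}
\]
does not intersect the open convex cone $\R_{++}^d = \{w\in\R^d : w^k>0~\forall k\}$. Since $C$ is nonempty compact and $\R_{++}^d$ is nonempty open convex, the Hahn--Banach separation theorem produces a nonzero $v\in\R^d$ and $\alpha\in\R$ with
\[
v\cdot c ~\leq~ \alpha ~\leq~ v\cdot k \quad \text{ for all } c\in C, \; k\in\R_{++}^d.
\]
The next step is to extract the correct sign information on $v$: boundedness of $v\cdot k$ from below on $\R_{++}^d$ forces $v^k\geq 0$ for every $k$ (otherwise send the corresponding coordinate of $k$ to $+\infty$), so $v\in\R_+^d\setminus\{0\}$. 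Moreover, approaching $0\in\overline{\R_{++}^d}$ gives $\alpha\leq 0$, hence $v\cdot u_j\leq v\cdot c\leq 0$ for every $j=1,\ldots,d$ and every $c\in C$. This contradicts~(1).

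I do not expect a serious obstacle here; the only point that requires a little care is the correct choice of separation (compact convex set versus open convex cone) and the routine verification that the separating functional $v$ lies in the closed positive orthant and is nonzero.
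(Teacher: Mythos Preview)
Your proof is correct and follows essentially the same route as the paper: the easy implication $(2)\Rightarrow(1)$ is handled identically, and for $(1)\Rightarrow(2)$ both you and the paper argue by contraposition via Hahn--Banach separation between a convex set built from the $u_j$'s and the open positive orthant, then read off that the separating functional lies in $\R_+^d\setminus\{0\}$ and annihilates~(1). The only cosmetic difference is that you separate the compact convex hull $\mathrm{conv}\{u_1,\ldots,u_d\}$ from $\R_{++}^d$, whereas the paper separates the closed cone $\{\sum_j\theta_j u_j:\theta\in\R_+^d\}$; both choices work and lead to the same conclusion.
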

\begin{proof} It is straightforward that $(2) \Rightarrow (1)$,  since for $\theta$ satisfying condition
$(2)$ and  for any  $v\in \R^d_+ \setminus\{0 \}$,  
\[ 0 < v \cdot \sum_{j=1}^d \theta ^j u_j  = \sum_{j=1}^d \theta ^j\, u_j  \cdot v \]
so that one of the non-negative terms of the last sum needs to be positive. 

To prove the converse, assume that $(2)$ is not satisfied, so that  for any $\theta \in
{\cal M}_1$, 
\[
\sum_{j=1}^d \theta^j u_j \not\in ~]0, + \infty[^d
\]
This means that the two convex subsets of $\R^d$ given by the open orthant  $]0, + \infty[^d$ on one hand, and
    the closed convex  cone  $C$ generated by  vectors $u_1, \cdots , u_d$ on the other
    hand, that is, $C= \left\{  \sum_{j=1}^d \theta_j u_j~,~ \theta=(\theta_1, \cdots ,\theta_d) \in \R_+^d\right\} $,
    are disjoint. Then by Hahn-Banach theorem,  there exists some hyperplane
    separating these  two convex sets, that is, there exists some $v  \in \R^d\setminus
    \{0\}$ and some $c\in \R$ such that 
\[ ]0, + \infty[^d \subset  \{x\in \R^d: x\cdot v \ge c\} \quad \text{and} \quad C  \subset   \{x\in \R^d: x\cdot v \le c\}.\]
Note that the first inclusion extends to the closed orthant   $[0, + \infty[^d$. Now since
    the zero vector  is  both in $C$ and in the closed orthant,  the constant $c$ must be
    zero. The first inclusion, extended to $[0, + \infty[^d$ and applied to the canonical
        vectors  $e_i$ for $ i=1, \cdots ,d$, yields that $v$ has non-negative
        components. And the second inclusion above implies in particular that $u_i\cdot v
        \le 0$ for all $i$, proving that $(1)$ is not satisfied. 
\end{proof}

\medskip
We are now  ready to complete the proof of Proposition~\ref{Gamma}.

For $\gamma \in \R_+^d$, the condition $\gamma \in \Gamma$, described by the inequalities
~\eqref{2-2}, says that for any $i \in \{1, \cdots ,d\}$, the property $(1)$ of  the lemma
is satisfied, with $d-1$ in place of  $d$ and with,   as vectors $u_i$'s, the $d-1$
projections on $\R ^{\{1, \cdots ,d\} \setminus \{i\}}$ of the  vectors
$\overrightarrow{\gamma_j}-\overrightarrow{\gamma_i},~ j \neq i$. The lemma  thus proves
that $\gamma \in \Gamma$ is equivalent to existence for each $i= 1, \cdots , d$, of some
$\theta _i \in {\cal M}_1$ satisfying $\theta _i^i=0$ and  for all $k \in \{1, \ldots ,d\}
\setminus \{i\}$, 
\be\label{eq5-1}
\gamma _i^k ~<~ \sum_{j=1}^d \theta_i^j \gamma _j^k. 
\ee
 It is straightforward that the condition $\theta _i^i=0$ can be removed. The first part
 of Proposition~\ref{Gamma} is therefore proved. 
 
 Suppose now that $\gamma > 0$, and that
 for any  $ i=1, \ldots ,d$,   there exists some $\theta _i= (\theta
 _i^1,\ldots,\theta_i^d)\in {\cal M}_1$, satisfying  the  inequalities \eqref{eq5-1} for all
 those indices $k$ for which  $Q_{ki}>0$. The
 inequalities  \eqref{eq5-1} being strict, without any restriction of 
 generality, one can assume that $\theta_i^j>0$ for all
 $i,j\in\{1,\ldots,d\}$. Then for  all $i, k \in\{1,\ldots,d\}$ for which $Q_{ki}=0$,  
 \[ \gamma _i^k  ~=~ 0 ~<~ \theta_i^k \log(1 + \gamma_k)  ~=~ \theta_i^k\gamma^k_k ~\leq~ \sum_{j=1}^d \theta_i^j \gamma _j^k.
\]
The inequalities \eqref{eq5-1} hold therefore  for all $i,k\in\{1,\ldots,d\}$,  which ensures that $\gamma \in \Gamma$.

\section{Proof of Theorem~\ref{pr2}}\label{sec4}
Suppose that the conditions of Theorem~\ref{pr2} are satisfied. For  the vector ${\bf
  \gamma} = (\gamma_1,\ldots,\gamma_d)$ defined by \eqref{2-8},  
it follows from ~\eqref{2-1} that
\[\gamma^j_i ~=~ \log(1 + \eps G_{ji}/\rho_i) ~\geq~ 0, \quad \forall i,j\in\{1,\ldots,d\}.
\]
Since $\gamma _i >0$ for all $i=1, \cdots, d$, then from the second assertion of 
Proposition~\ref{Gamma}, $\gamma \in \Gamma$ if for every $i$, ~\eqref{2-7} is
satisfied  with some vector $\theta _i \in {\cal M}_1$.  
Let $i \in \{1, \cdots ,d\}$ and  $k\not= i$ be such that $Q_{ki}>0$. Then,  there is $j\in\{1,\ldots, d\}\setminus\{i\}$ such that $p_{ji} > 0$ and consequently, $(\rho P)_i > 0$. Letting $\theta_i^j = \rho_j p_{ji}/(\rho P)_i$ for $j=1,\ldots,d$  we obtain
\begin{align}
\gamma_i^k &~=~ \log(1 + \eps G_{ki}/\rho_i) ~=~ \log\left(1 + \eps \sum_{j=1}^d
\frac{G_{kj} p_{ji}}{ \rho_i} \right) \le   \eps \sum_{j=1}^d  \frac{G_{kj} p_{ji}}{\rho_i} \nonumber\\
&~=~  \frac{(\rho P)_i}{\rho_i}   \sum_{j=1}^d \eps \frac{G_{kj} p_{ji}}{(\rho
  P)_i} ~=~ \frac{(\rho P)_i}{\rho_i}  \sum_{j=1}^d   \theta_i^j\, \eps \,
\frac{G_{kj}}{\rho_j}  \nonumber\\
&~\le~ {\cal R}(\rho) \sum_{j=1}^d   \theta_i^j \, \eps
\,\frac{G_{kj}}{\rho_j} . \label{6-1}
\end{align}
Assuming now that 
\[
0 ~<~ \varepsilon ~<~  \min_{ 1\leq i\leq d} ~\frac{\rho_i}{G_{ii}} ~x_{\rho} ,
\]
one gets 
 \[ 0 ~\le~ \eps \frac{G_{kj}}{\rho_j} ~<~x_{\rho}   \quad \text{ for all } j \in \{1, \cdots ,d\},\]
 where the left inequality is strict at least for some $j \in \{1, \cdots , d\}$ with
 $p_{ji}>0$, because $Q_{ki}>0$ implies that   $G_{ki} ~=~ \sum_{j=1}^d 
G_{kj} p_{ji} >0$. It then results from the definition of $x_{\rho}$ that
 \[
  {\cal R}(\rho)~ \eps~\frac{G_{kj}}{\rho_j} ~\le \log\left(1+ \eps \frac{G_{kj}}{\rho_j}\right) \quad \text{ for all } j \in \{1, \cdots ,d\},
 \]
 where the inequality is strict at least for some $j$ with $\theta_i^j>0$.
 The last inequality combined with \eqref{6-1} proves that 
\[ 
\gamma_i^k ~<~ \sum_{j=1}^d  \theta_i^j \log\left(1+ \eps \frac{G_{kj}}{\rho_j}\right)~=   \sum_{j=1}^d  \theta_i^j
\gamma_j^k.
\]
The condition ~\eqref{2-7} of the Proposition~\ref{Gamma} is thus satisfied  and therefore, $\gamma \in \Gamma$.

\section{Proof of Theorem~\ref{theorem_3}}\label{sec5} 
To prove Theorem~\ref{theorem_3}, we use the equality \eqref{eq1-9} and the explicit representation of the sample path large
deviation rate function $I_{[0,T]}(\phi)$ obtained in
\cite{Ignatiouk:01,Ignatiouk:04}. Recall that the family of scaled processes $Z_\eps(t) =
\eps Z(t/\eps), \; t\in[0,T]$ satisfies the sample path large deviation principle (see
~\cite{A-D,D-E, Ignatiouk:01, Ignatiouk:04}) with the
good rate function 
\[
I_{[0,T]}(\phi) = \begin{cases}
\int_0^T L(\phi(t),\dot{\phi}(t)) \, dt &\text{if $\phi :[0,T]\to\R_+^d$ is absolutely
  continuous}\\
+\infty &\text{otherwise} 
\end{cases}
\]
where the local rate function $L(x,v)$ is given by the formula (see~\cite{Ignatiouk:01}) 
\[
L(x,v) ~\stackrel{def}{=}~ \sup_{\alpha\in{\cal B}_{\Lambda(x)}} \bigl(\alpha \cdot v -
R(\alpha) \bigr), \quad \forall v\in\R^d, \; x\in\Z_+^d.
\]
As above, $\alpha\cdot v$ denotes here the usual scalar product of $\alpha$ and $v$ in $\R^d$,
\[
R(\alpha) ~\stackrel{def}{=}~ \sum_{i=1}^d \mu_i\Bigl( \sum_{j=1}^d p_{ij}e^{\alpha^j-\alpha^i} +
p_{i0}e^{-\alpha^i} - 1\Bigr) + \sum_{i=1}^d \lambda_i(e^{\alpha^i}-1),
\]
for $x=(x^1,\ldots,x^d)\in\R_+^d$, 
\[
\Lambda(x) ~\stackrel{def}{=}~ \{i\in\{1,\ldots,d\} : x^i > 0\}
\]
and ${\cal B}_\Lambda$ is the set of all those  $\alpha = (\alpha^1, \ldots,\alpha^d) \in\R^d$
for which  
\[
e^{\alpha^i} \leq  \, \sum_{j=1}^d p_{ij}e^{\alpha^j} +
p_{i0}  \quad \text{ for all $ i\not\in\Lambda$.}
\]
For a constant function $\phi_x(t) \equiv x$ with $x\in (x^1,\ldots,x^d)\in\R^d_+$, we get 
\[
I_{[0,1]}(\phi_x) ~=~ - \inf_{\alpha\in{\cal B}_{\Lambda(x)}} R(\alpha) 
\]
and  using \eqref{eq1-9} we obtain 
\begin{align*}
\log r_e^* &\geq~ - \inf_{x\in\R_+^d :~ x\not= 0} ~I_{[0,1]}(\phi_x) ~=~ 
\max_{\Lambda\subset\{1,\ldots,d\}, \; \Lambda\not=\emptyset} ~\inf_{\alpha\in{\cal B}_{\Lambda}} R(\alpha)\\ 
&\geq~ \max_{1\leq i\leq d}
~\inf_{\alpha\in{\cal B}_{\{i\}}} R(\alpha).
\end{align*}
To prove Theorem~\ref{theorem_3} it is therefore sufficient to show that 
\be\label{7-2}
\max_{1\leq i\leq d}~\inf_{\alpha\in{\cal B}_{\{i\}}} R(\alpha) ~=~ - \min_{1\leq i\leq d}
~\frac{1}{G_{ii}}(\sqrt{\mu_i}-\sqrt{\nu_i})^2.
\ee
For this we first notice that for any $i\in\{1,\ldots,d\}$, 
\begin{align} 
\inf_{\alpha\in{\cal B}_{\{i\}}} R(\alpha)  &=~ \inf\left\{ R(\alpha) \; \Bigl| \; \alpha \in \R^d ,   \ e^{\alpha^j} \leq  \, \sum_{k=1}^d p_{jk}e^{\alpha^k} +
p_{j0} , \quad \forall \, j\not= i \right\}\nonumber \\
&\leq~ \inf\left\{ R(\alpha) \; \Bigl| \; \alpha \in \R^d ,   \ e^{\alpha^j}  =  \, \sum_{k=1}^d p_{jk}e^{\alpha^k} +
p_{j0} , \quad \forall \, j\not= i \right\}\label{7-3}.
\end{align}
Lemma~\ref{lemma_background}  shows that the right hand side of  \eqref{7-3} is equal to 
\[
 \inf_{\gamma_i > -1} ~\frac{\gamma_i}{G_{ii}} \left(
\nu_i - \frac{\mu_i}{1 + \gamma_i}\right) ~=~ -\frac{1}{G_{ii}}(\sqrt{\mu_i}-\sqrt{\nu_i})^2.
\]
Without any restriction of generality we can assume that  
\be\label{7-1}
\min_{1\leq i\leq d}
~\frac{1}{G_{ii}}(\sqrt{\mu_i}-\sqrt{\nu_i})^2  ~ = ~\frac{1}{G_{11}}(\sqrt{\mu_1}-\sqrt{\nu_1})^2.
\ee
To get \eqref{7-2}, it is now sufficient to show that \eqref{7-3} holds
with the equality for $i=1$. 

For a given $\alpha\in\R^d$, it is convenient to introduce the set  $J(\alpha)$ of all those $j\in\{1,\ldots,d\}$ for which 
\[
e^{\alpha^k} = \, \sum_{j=1}^d p_{kj}e^{\alpha^j} +
p_{k0} . 
\]
The proof of equality  in  \eqref{7-3}  for $i=1$ uses the the following lemma.

\begin{lemma}\label{lem5-1} Suppose that the conditions  (A) and (B) are satisfied and let \eqref{7-1} hold. Suppose moreover that 
  $\alpha\in{\cal B}_{\{1\}}$ and   $\{2,\ldots,d\}\setminus J(\alpha)\not= \emptyset$. Then  for any $i\in\{2,\ldots,d\}\setminus J(\alpha)$,
there exists an $\tilde\alpha\in{\cal B}_{\{1\}}$ such that $J(\alpha)\cup\{i\}\subset J(\tilde\alpha)$
and 
\[
R(\tilde\alpha) ~<~ \max\left\{R(\alpha), ~ - ~\frac{1}{G_{ii}}\left(\sqrt{\mu_i}-\sqrt{\nu_i}\right)^2\right\}
\]
\end{lemma}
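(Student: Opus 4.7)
The plan is to construct $\tilde\alpha$ explicitly via the parametrization of Proposition~\ref{prop_background} and to exploit the strict convexity of $R\circ\alpha_\Lambda$. Set $\Lambda=\{1,\ldots,d\}\setminus(J(\alpha)\cup\{i\})$ (with a routine modification when $1\in J(\alpha)$). Since $\alpha$ satisfies equalities at $J(\alpha)$ and a strict inequality at $i$, both indices lying in $\Lambda^c$, one has $\alpha\in{\cal B}_\Lambda$; writing $\alpha=\alpha_\Lambda(\beta_0)$ then gives $\beta_0^j=0$ for $j\in J(\alpha)$, $\beta_0^i>0$, and $\beta_0^j=\alpha^j$ for $j\in\Lambda$. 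The natural candidate is $\tilde\alpha=\alpha_\Lambda(\beta_1)$ where $\beta_1^i=0$ and $\beta_1^j=\beta_0^j$ otherwise; this automatically yields $J(\tilde\alpha)\supset\Lambda^c\supset J(\alpha)\cup\{i\}$.

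The membership $\tilde\alpha\in{\cal B}_{\{1\}}$ follows from a monotonicity observation: each $m^\Lambda_{kj}(\beta)$ is non-increasing in every coordinate $\beta^{j'}$ with $j'\in\Lambda^c$, through the factor $e^{-\beta^{j'}}$. Shrinking $\beta_0^i$ to $0$ therefore increases $e^{\tilde\alpha^k}$ for every $k\in\Lambda^c$, while $\tilde\alpha^j=\alpha^j$ remains unchanged for $j\in\Lambda$. Consequently the constraints $e^{\alpha^j}\leq\sum_k p_{jk}e^{\alpha^k}+p_{j0}$ at the indices $j\in\Lambda\setminus\{1\}$ are only loosened, while those at $j\in\Lambda^c\cap\{2,\ldots,d\}$ now hold with equality.

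To control $R(\tilde\alpha)$, I would analyze the strictly convex function $\phi(s)=R(\alpha_\Lambda(\beta_s))$ obtained by varying $\beta_s^i=s$ and fixing all other coordinates equal to those of $\beta_0$; strict convexity is asserted in Proposition~\ref{prop_background}. If $\phi(0)<\phi(\beta_0^i)=R(\alpha)$, then $R(\tilde\alpha)<R(\alpha)$ and the lemma follows at once. Otherwise $\phi$ is strictly decreasing on $[0,\beta_0^i]$, and it remains to establish the alternative bound $R(\tilde\alpha)<-\frac{1}{G_{ii}}(\sqrt{\mu_i}-\sqrt{\nu_i})^2$. For this I would compare the value at $\beta_1$ with the closed-form minimum from Lemma~\ref{lemma_background} applied to the index $i$, namely $-\frac{1}{G_{ii}}(\sqrt{\mu_i}-\sqrt{\nu_i})^2$ attained at $s^*=\sqrt{\mu_i/\nu_i}-1$, and use strict convexity of $\beta\mapsto R(\alpha_\Lambda(\beta))$ to perturb in the remaining $\R^\Lambda$ directions and dip strictly below that threshold.

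The principal obstacle is precisely this second case: when closing the slack at $i$ raises $R$, one must produce $\tilde\alpha$ whose value strictly undercuts $-\frac{1}{G_{ii}}(\sqrt{\mu_i}-\sqrt{\nu_i})^2$. This appears to require a constrained minimization of $\beta^\Lambda\mapsto R(\alpha_\Lambda((\beta^\Lambda,0^{\Lambda^c})))$ over the region where $\tilde\alpha$ remains in ${\cal B}_{\{1\}}$, combined with the first-order conditions at the minimizer and a direct comparison to Lemma~\ref{lemma_background}; a boundary/KKT argument would be needed should the unconstrained minimizer fall outside ${\cal B}_{\{1\}}$.
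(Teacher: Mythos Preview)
Your construction of $\tilde\alpha$ by sending the single coordinate $\beta_0^i$ to $0$ does give a point of ${\cal B}_{\{1\}}$ with $J(\tilde\alpha)\supset J(\alpha)\cup\{i\}$, but the required bound on $R(\tilde\alpha)$ is not established. Your ``second case'' ($\phi(0)\ge \phi(\beta_0^i)=R(\alpha)$) is exactly where the content lies, and the sketch you offer there --- perturb in the $\R^\Lambda$ directions, run a constrained minimization with KKT conditions, compare to the closed-form minimum of Lemma~\ref{lemma_background} --- is not a proof. Two specific problems: the claim that ``$\phi$ is strictly decreasing on $[0,\beta_0^i]$'' does not follow from strict convexity together with $\phi(0)\ge\phi(\beta_0^i)$ (the minimum of $\phi$ may be interior); and, more fundamentally, nothing links the one-dimensional section you chose to the specific value $-\frac{1}{G_{ii}}(\sqrt{\mu_i}-\sqrt{\nu_i})^2$, so even a correct analysis of $\phi$ along that section cannot by itself produce the strict inequality you need.

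The paper bypasses this obstacle with one idea you are missing. Rather than varying only $\beta^i$, it works in the parametrization $\Lambda=\{1,i\}$ and takes a \emph{linear interpolation in $\beta$-coordinates} between $\alpha$ and the explicit vector $\overrightarrow{\gamma_i}$ obtained from \eqref{2-1} with $\gamma_i=\gamma_i^*=\sqrt{\mu_i/\nu_i}-1$. This reference vector has two tailor-made properties: it realizes exactly $R(\overrightarrow{\gamma_i})=-\frac{1}{G_{ii}}(\sqrt{\mu_i}-\sqrt{\nu_i})^2$, and the $i$-th constraint is \emph{violated} at $\overrightarrow{\gamma_i}$ in the direction opposite to $\alpha$ (relation~\eqref{6-6}). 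The intermediate value theorem then produces an interior point $\tilde\beta$ on the segment at which the $i$-th constraint becomes an equality; since both endpoints have $\beta^k=0$ for every $k\in J(\alpha)$ lying in $\Lambda^c$, so does $\tilde\beta$, whence $J(\alpha)\cup\{i\}\subset J(\tilde\alpha)$. Strict convexity of $R\circ\alpha_{\{1,i\}}$ now yields $R(\tilde\alpha)<\max\{R(\alpha),R(\overrightarrow{\gamma_i})\}$ directly, with no case distinction and no optimization argument. Note also that your choice $\Lambda=\{1,\ldots,d\}\setminus(J(\alpha)\cup\{i\})$ puts $i$ in $\Lambda^c$, so $\overrightarrow{\gamma_i}\notin{\cal B}_\Lambda$ by \eqref{6-6} and the paper's device is unavailable in your chart; the choice $\Lambda=\{1,i\}$ is essential precisely so that $\overrightarrow{\gamma_i}$ lies in the domain of the homeomorphism.
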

\begin{proof} Indeed, consider  the vector $\overrightarrow {\gamma_i}=(\gamma^1_i,\ldots,\gamma^d_i)$  defined by \eqref{2-1} with $\gamma_i = \gamma_i ^*=  \sqrt{\mu_i/\nu_i} - 1~>~ 0$. Then  for $k\not= i$,  using Lemma~\ref{lemma_background},
\begin{align}
\sum_{j=1}^d p_{kj}e^{\gamma^j_i} +
p_{i0}    ~=~ e^{\gamma_i^k}, \label{6-7}
\end{align}
from which it follows that  $\overrightarrow {\gamma_i} \in{\cal B}_{\{ i\}} \subset {\cal B}_{\{1, i\}}$. Moreover,
\begin{align}
\sum_{j=1}^d p_{ij}e^{\gamma^j_i} +
p_{i0}   &~=~   \, \sum_{j=1}^d p_{ij} (1 + Q_{ji} \gamma_i^*) +
p_{i0} ~=~   1 +  \sum_{j=1}^d p_{ij} Q_{ji} \gamma_i^*  \nonumber\\   &~=~  1 + \left(1 - \frac{1}{G_{ii}}\right) \gamma_i^*~<~  1 +  \gamma_i^* ~=~ e^{\gamma_i^i} \label{6-6}
\end{align} 
and
\be\label{6-5}
R(\overrightarrow {\gamma_i}) ~=~ \frac{\gamma_i^*}{G_{ii}} \left(
\nu_i - \frac{\mu_i}{1 + \gamma_i^*}\right) ~=~ - ~\frac{1}{G_{ii}}\left(\sqrt{\mu_i}-\sqrt{\nu_i}\right)^2,
\ee
Consider now the homeomorphism $\beta\to\alpha_{\{1,i\}}(\beta)$, from
  $\R_+^{\{1, i\}, d}$ to ${\cal B}_{\{1, i\}}$, defined by  \eqref{solution} for $\Lambda = \{1,i\}$,  and let $\alpha \to \beta_{\{1, i\}}(\alpha)$ denote
its inverse mapping. Then 
the equality \eqref{6-7}
implies that $\beta^k_{\{1, i\}}(\overrightarrow {\gamma_i}) = 0$ for all $k\in\{1,\ldots,d\}\setminus\{1, i\}$.
Suppose now that $\alpha\in{\cal B}_{\{1\}}$ and $i\not\in J(\alpha)$. Then  according to the definition of the set  ${\cal B}_{\{1\}}$, 
\[
\, \sum_{j=1}^d p_{ij}e^{\alpha^j} +
p_{i0} ~>~ e^{\alpha^i} 
\]
and $\alpha\in{\cal B}_{\{1, i\}}$. Since the function $R(\alpha_{\{1,i\}}(\beta))$ is continuous,  the last relation combined with \eqref{6-6} shows that for  some $0 < s < 1$,  the point  
$\tilde\beta =  s \beta_{\{1, i\}}(\overrightarrow {\gamma_i}) + (1-s) \beta_{\{1, i\}}(\alpha)\in
\R_+^{\{1, i\}, d}$ satisfies the equality 
\be\label{7-8}
\sum_{j=1}^d p_{ij}e^{\alpha^j_{\{1, i\}}(\tilde\beta)} +
p_{i0}  ~=~ e^{\alpha^i_{\{1, i\}}(\tilde\beta)}
\ee
and consequently, $i\in J(\alpha_{\{1, i\}}(\tilde\beta))$. Moreover,  $\tilde\beta_j=0$ for all those $j\in\{1,\ldots,d\}\setminus\{ i\}$
for which $\beta^j_{\{1, i\}}(\alpha) = 0$ and consequently, $J(\alpha) \subset
J(\alpha_{\{1, i\}}(\tilde\beta)) $.  Finally,  recall that by Proposition~\ref{prop_background}, the function $R(\alpha_{\{1,i\}}(\beta))$ is strictly convex. Hence, 
\[
R(\alpha_{\{1,i\}}(\tilde\beta)) ~<~ \max\{R(\alpha), R(\overrightarrow {\gamma_i})\},
\]
and therefore, our lemma is verified with $\tilde\alpha = \alpha_{\{1, i\}}(\tilde\beta)$. 
\end{proof}

\bigskip
Now we are ready to complete the proof of Theorem~\ref{theorem_3}. By induction with respect to the
set $J(\alpha)$, for any $\alpha\in{\cal B}_{\{1\}}$ with $J(\alpha)
\nsupseteq \{2,\ldots,d\}$ there is a point $\tilde\alpha\in {\cal B}_{\{1\}}$
with $J(\tilde\alpha)  \supseteq \{2,\ldots,d\}$
such that 
\[
R(\tilde\alpha) ~<~ \max\left\{R(\alpha), - \min_{2 \le i \le d}
~\frac{1}{G_{ii}}\left(\sqrt{\mu_i}-\sqrt{\nu_i}\right)^2\right\}.
\]
When combined with \eqref{7-1} and \eqref{6-5} for $i=1$, the last inequality shows that 
\[
R(\tilde\alpha) ~<~ \max\left\{R(\alpha), - 
~\frac{1}{G_{11}}\left(\sqrt{\mu_1}-\sqrt{\nu_1}\right)^2\right\} ~=~ \max\left\{R(\alpha), R(\overrightarrow {\gamma_1})\right\},  
\]
where, as in the proof of the last lemma,  $\overrightarrow {\gamma_1} =(\gamma^1_1,\ldots,\gamma^d_1)$  is defined by \eqref{2-1} with $\gamma_1 = \gamma_1 ^*=  \sqrt{\mu_1/\nu_1} - 1$. 
Since $J(\tilde\alpha)  = J(\overrightarrow {\gamma_1}) = \{2,\ldots,d\}$ and the minimum of  $R(\alpha)$ over $\alpha\in\R^d$ with $J(\alpha) = \{2,\ldots,d\}$ is achieved at the point $\overrightarrow {\gamma_1}$, using the last inequality we conclude that 
\[
R(\overrightarrow {\gamma_1}) ~\leq~  R(\tilde\alpha) ~<~ R(\alpha).
\]
This proves that the minimum of $R(\alpha)$ over $\alpha\in{\cal B}_{\{1\}}$
is achieved at  $\alpha = \overrightarrow {\gamma_1}$  and 
consequently,  equality holds in \eqref{7-3}  for $i=1$. The proof of Theorem~\ref{theorem_3} is complete.

\bibliographystyle{amsplain}
\bibliography{ref}

\providecommand{\bysame}{\leavevmode\hbox to3em{\hrulefill}\thinspace}
\providecommand{\MR}{\relax\ifhmode\unskip\space\fi MR }
\providecommand{\MRhref}[2]{%
  \href{http://www.ams.org/mathscinet-getitem?mr=#1}{#2}
}
\providecommand{\href}[2]{#2}
\begin{thebibliography}{10}

\bibitem{A-D}
Rami Atar and Paul Dupuis, \emph{Large deviations and queueing networks:
  methods for rate function identification}, Stochastic Processes and their
  Applications \textbf{84} (1999), no.~2, 255--296.

\bibitem{D-E}
Paul Dupuis and Richard~S. Ellis, \emph{The large deviation principle for a
  general class of queueing systems. {I}}, Transactions of the American
  Mathematical Society \textbf{347} (1995), no.~8, 2689--2751.

\bibitem{F-M-M-S}
G.~Fayolle, V.~A. Malyshev, M.~V. Men'shikov, and Sidorenko A.F.,
  \emph{Lyapounov functions for {J}ackson networks}, Mathemetiics of operations
  research \textbf{18} (1993), no.~4, 916--927.

\bibitem{Ignatiouk:01}
Irina Ignatiouk-Robert, \emph{The large deviations of {J}ackson networks},
  Annals of Applied Probability \textbf{10} (2000), no.~3, 962--1001.

\bibitem{Ignatiouk:04}
\bysame, \emph{Large deviations for processes with discontinuous statistics},
  Annals of Probability \textbf{33} (2005), no.~4, 1479--1508.

\bibitem{Ignatiouk:05}
\bysame, \emph{On the spectrum of {M}arkov semigroups via sample path large
  deviations}, Probability theory and related fields \textbf{134} (2006),
  no.~11, 44--80.

\bibitem{Kelly}
Frank~P. Kelly, \emph{Reversibility and stochastic networks}, Wiley series in
  probability and mathematical statistics, John Wiley \& Sons Ltd, Chichester,
  1979.

\bibitem{Malyshev-Spieksma}
V.~A. Malyshev and F.M. Spieksma, \emph{Intrinsic convergence rate of countable
  {M}arkov chains}, Markov Processes and Related Fields \textbf{1} (1995),
  203--266.

\bibitem{Seneta}
E.~Seneta, \emph{Nonnegative matrices and {M}arkov chains}, second ed.,
  Springer-Verlag, New York, 1981.

\bibitem{LimingWu}
Liming Wu, \emph{Essential spectral radius for markov semigroups (i): discrete
  time case}, Probab. Theory Relat. Fields \textbf{128} (2004), 255--321.

\end{thebibliography}
\end{document}